\newtheorem{thm}{Theorem}[section]
\newtheorem*{thm*}{Theorem}
\newtheorem{prop}[thm]{Proposition}
\newtheorem{cor}[thm]{Corollary}
\newtheorem{con}[thm]{Conjecture}
\numberwithin{equation}{section}
\theoremstyle{definition}
\newtheorem{defi}[thm]{Definition}
\newtheorem*{defi*}{Definition}
\newtheorem{ex}[thm]{Example}
\newtheorem{rem}[thm]{Remark}
\newcommand{\QQ}{\mathbb{Q}}
\newcommand{\ZZ}{\mathbb{Z}}
\newcommand{\diff}{\mathop{}\!\mathrm{d}}
\newcommand{\nca}[2]{#1\langle #2\rangle}
\newcommand{\Qnca}[1]{\QQ\langle #1 \rangle}
\newcommand{\ncac}[2]{#1\langle \langle #2\rangle\rangle}
\newcommand{\spanQ}{\operatorname{span}_{\QQ}}
\newcommand{\Sl}{\operatorname{SL}_2(\ZZ)}
\newcommand{\quasimod}{\widetilde{\mathcal{M}}^\QQ(\Sl)}
\newcommand{\1}{\mathbf{1}}
\newcommand{\quasish}{\ast_\diamond}
\newcommand{\dec}{\Delta_{\operatorname{dec}}}
\newcommand{\A}{\mathcal{A}}
\newcommand{\B}{\mathcal{B}}
\newcommand{\QB}{\nca{\QQ}{\mathcal{B}}}
\newcommand{\Ybi}{\mathcal{Y}^{\operatorname{bi}}}
\newcommand{\QYbi}{\nca{\QQ}{\Ybi}}
\newcommand{\Y}{\mathcal{Y}}
\newcommand{\QY}{\nca{\QQ}{\mathcal{Y}}}
\newcommand{\X}{\mathcal{X}}
\newcommand{\QX}{\nca{\QQ}{\mathcal{X}}}
\newcommand{\h}{\mathfrak{h}}
\newcommand{\Gen}{\rho_{\A}(\mathcal{W})}
\newcommand{\Genphi}{\varphi(\mathcal{W}(\A))}
\newcommand{\GenY}{\rho_{\Y}(\mathcal{W})}
\newcommand{\GenYbi}{\rho_{\Ybi}(\mathcal{W})}
\newcommand{\GenBo}{\rho_{\B}^0(\mathcal{W}^0)}
\newcommand{\GenB}{\rho_{\B}(\mathcal{W})}
\newcommand{\GenR}{(\varphi\otimes \rho_{\A})(\mathcal{W})}
\newcommand{\GenRqs}{(\varphi_{\quasish}\otimes \rho_{\A})(\mathcal{W})}
\newcommand{\wt}{\operatorname{wt}}
\newcommand{\dep}{\operatorname{dep}} 
\newcommand{\Z}{\mathcal{Z}}
\newcommand{\Zq}{\mathcal{Z}_q}
\newcommand{\zq}{\zeta_q}
\newcommand{\bsh}{\shuffle_b}
\newcommand{\bst}{\ast_b}
\newcommand{\qdia}{\diamond_b}
\newcommand{\regT}{\operatorname{reg}_T}
\newcommand{\reg}{\operatorname{reg}}
\newcommand{\cmes}[2]{G\binom{#1}{#2}}
\newcommand{\bi}[2]{\binom{#1}{#2}}
\newcommand{\Gzq}[3]{\mathfrak{B}_{#1}\binom{#2}{#3}}
\newcommand{\Gcmes}[3]{\mathfrak{G}_{#1}\binom{#2}{#3}}
\newcommand{\swap}{\operatorname{swap}}
\newcommand{\todo}[1]{{\color{red} To do: #1}}
\title{Balanced multiple $\operatorname{q}$-zeta values}
\author{Annika Burmester}
\address{Faculty of Mathematics, Bielefeld University, Germany.}
\email{aburmester@math.uni-bielefeld.de}
\subjclass[2020]{
11M32, 
05A30, 
16T05
}
\keywords{multiple zeta values, multiple q-zeta values, quasi-shuffle Hopf algebras, generating series, bimoulds}
\thanks{The author was partially funded by the Deutsche Forschungsgemeinschaft (DFG, German Research Foundation) -- SFB-TRR 358/1 2023 — 491392403.}
\begin{document} 
	
\maketitle
	
\begin{abstract} \noindent
We introduce the balanced multiple q-zeta values. They give a new model for multiple q-zeta values, whose product formula combines the shuffle and stuffle product for multiple zeta values in a natural way. Moreover, the balanced multiple q-zeta values are invariant under a very explicit involution. Thus, all relations among the balanced multiple q-zeta values are conjecturally of a very simple shape. Examples of the balanced multiple q-zeta values are the classical Eisenstein series, and they also contain the combinatorial multiple Eisenstein series introduced in \cite{bb}. The construction of the balanced multiple q-zeta values is done on the level of generating series. We introduce a general setup relating Hoffman's quasi-shuffle products to explicit symmetries among generating series of words, which gives a clarifying approach to Ecalle's theory of bimoulds. This allows us to obtain an isomorphism between the underlying Hopf algebras of words related to the combinatorial bi-multiple Eisenstein series and the balanced multiple q-zeta values.
\end{abstract} 

	
\section{Introduction} 

\noindent
In general, a \emph{q-analog} of some expression is a generalization involving the variable $q$, which returns the original expression by taking the limit $q\to 1$. For example, a q-analog of some integer $n\geq 1$ is 
\[\{n\}_q=\frac{1-q^n}{1-q}=1+q+\dots+q^{n-1},\]
since $\lim_{q\to 1} \{n\}_q=n$. In this work we are interested in a particular kind of q-analogs of multiple zeta values.

\noindent\vspace{-0,5cm} \\
\paragraph{\textbf{Multiple zeta values.}} \emph{Multiple zeta values} are real numbers defined for integers $k_1\geq2,\ k_2,\ldots,k_d\geq1$ by
\[\zeta(k_1,\ldots,k_d)=\sum_{n_1>\dots > n_d>0} \frac{1}{n_1^{k_1}\dots n_d^{k_d}}.\]
We refer to the number $k_1+\dots+k_d$ as the \emph{weight} and to the number $d$ as the \emph{depth}. Multiple zeta values were first studied in depth $2$ by C. Goldbach and L. Euler more than two centuries ago. During the last three decades, they became an active field of research. They occur in various areas of mathematics, such as number theory, algebraic geometry, knot theory, quantum field theory, and also in mathematical physics. Multiple zeta values appeared in full generality for the first time in \cite[p. 429]{ec-old}, and the modern systematic study of them was initiated in \cite{h2}, \cite{za}. 
A survey on achievements in the theory of multiple zeta values can be found in \cite{bgf} and \cite{zh}, and all articles related to multiple zeta values are listed in \cite{hw}. 
\vspace{0,2cm} \\
The product of multiple zeta values can be expressed in two different ways, one is called the \emph{stuffle product}, and the other one is called the \emph{shuffle product}. Both products possess a description in terms of quasi-shuffle algebras (introduced in Section \ref{quasi-shuffle algebras}). First, consider the alphabet $\Y=\{y_1,y_2,\ldots\}$ and let $\QY$ be the non-commutative $\QQ$-algebra generated by $\Y$. Then the \emph{stuffle product} $\ast$ on $\QY$ is defined by $\1\ast w=w\ast \1=w$ and
\begin{align} \label{stuffle product}
y_iu\ast y_jv=y_i(u\ast y_jv)+y_j(y_iu\ast v)+y_{i+j}(u\ast v)
\end{align} 
for all $u,v,w\in \QY$. The combinatorics of infinite nested sums imply that there is a surjective algebra morphism
\begin{align} \label{stuffle MZV}
(\QY,\ast)&\to \Z, \\
y_{k_1}\dots y_{k_d}&\mapsto \zeta^{\ast}(k_1,\ldots,k_d). \nonumber
\end{align}
The elements $\zeta^{\ast}(k_1,\ldots,k_d)$ are the \emph{stuffle regularized multiple zeta values}, they are uniquely determined by $\zeta^{\ast}(k_1,\ldots,k_d)=\zeta(k_1,\ldots,k_d)$ for all $k_1\geq2,\ k_2,\ldots,k_d\geq1$ and $\zeta^{\ast}(1)=0$. \\
Second, consider the finite alphabet $\X=\{x_0,x_1\}$ and let $\QX$ be the non-commutative algebra over $\QQ$ generated by $\X$. The \emph{shuffle product} on $\QX$ is recursively defined by $\1\shuffle w=w\shuffle \1=w$ and 
\begin{align} \label{shuffle product}
x_iu\shuffle x_jv=x_i(u\shuffle x_jv)+x_j(x_iu\shuffle v)
\end{align}
for all $u,v,w\in \QX$. Define $\h^1$ to be the subspace of $\QX$ generated by all words not ending in $x_0$, i.e., we have $\h^1=\QQ\1+\QX x_1$. Using the iterated integral representation of multiple zeta values one obtains a surjective algebra morphism
\begin{align} \label{shuffle MZV}
(\h^1,\shuffle)&\to \Z, \\
x_0^{k_1-1}x_1\dots x_0^{k_d-1}x_1&\mapsto \zeta^{\shuffle}(k_1,\ldots,k_d). \nonumber
\end{align}
The elements $\zeta^{\shuffle}(k_1,\ldots,k_d)$ are the \emph{shuffle regularized multiple zeta values}, they are uniquely determined by $\zeta^{\shuffle}(k_1,\ldots,k_d)=\zeta(k_1,\ldots,k_d)$ for all $k_1\geq2,\ k_2,\ldots,k_d\geq 1$ and $\zeta^{\shuffle}(1)=0$. \\
In \cite[Theorem 1]{ikz} an explicit map is given relating the stuffle and the shuffle regularized multiple zeta values, which allows comparing the two product formulas given in \eqref{stuffle MZV} and \eqref{shuffle MZV}. This yields the \emph{extended double shuffle relations} among multiple zeta values, which are conjectured to give all algebraic relations in $\Z$ (\cite{ikz}). \\
Quasi-shuffle products can be lifted to symmetries among generating series (Section \ref{quasi-shuffle and gen series}). Consider the generating series of the shuffle resp. stuffle regularized multiple zeta values
\begin{align*}
\mathfrak{z}_d^{\bullet}(X_1,\ldots,X_d)&=\sum_{k_1,\ldots,k_d\geq1} \zeta^{\bullet}(k_1,\ldots,k_d)X_1^{k_1-1}\dots X_d^{k_d-1}\in \Z\llbracket X_1,\ldots,X_d\rrbracket, \qquad d\geq1,
\end{align*}
with $\bullet\in \{\ast,\shuffle\}$. The stuffle and shuffle product in depth $2$ translate to
\begin{align} \label{product MZV gen series}
\mathfrak{z}_1^{\ast}(X_1)\mathfrak{z}_1^{\ast}(X_2)&=\mathfrak{z}_2^{\ast}(X_1,X_2)+\mathfrak{z}_2^{\ast}(X_2,X_1)+\frac{\mathfrak{z}_1^{\ast}(X_1)-\mathfrak{z}_1^{\ast}(X_2)}{X_1-X_2}, \\
\mathfrak{z}_1^{\shuffle}(X_1)\mathfrak{z}_1^{\shuffle}(X_2)&=\mathfrak{z}_2^{\shuffle}(X_1+X_2,X_2)+\mathfrak{z}_2^{\shuffle}(X_1+X_2,X_1).
\end{align}
In general depths, there are recursive formulas for the stuffle and shuffle product in terms of generating series of words (\cite[Proposition 8]{ih}). Those product formulas explain Ecalle's notion of \emph{symmetril} and \emph{symmetral} moulds (\cite[equation (6), (8)]{ec-mzv}).

\noindent\vspace{-0,5cm} \\
\paragraph{\textbf{Multiple q-zeta values.}} For a better understanding of the structure of the multiple zeta values, we study particular q-analogs of them. Including the additional variable $q$ reveals new structures. For example, one obtains an involution relating the two product formulas of multiple zeta values, or a derivation with respect to $q$ known from the theory of quasi-modular forms.
\vspace{0,2cm}\\ 
We use the model-free approach to q-analogs of multiple zeta values given in \cite{bk}. To integers $s_1\geq 1,\ s_2,\ldots,s_l\geq 0$ and polynomials $R_1\in t\QQ[t],\ R_2,\ldots,R_l\in$ $\QQ[t]$, associate the \emph{generic multiple q-zeta value}
\[\zeta_q(s_1,...,s_l;R_1,...,R_l)=\sum_{n_1>\dots>n_l>0} \frac{R_1(q^{n_1})}{(1-q^{n_1})^{s_1}}\cdots \frac{R_l(q^{n_l})}{(1-q^{n_l})^{s_l}} \in \QQ\llbracket q\rrbracket.\]
For $s_1\geq2$, $s_2,\ldots,s_l\geq1$, a generic multiple q-zeta values $\zq(s_1,\ldots,s_l;R_1,\ldots,R_l)$ is indeed a (modified) q-analog of multiple zeta values (Proposition \ref{limit generic qMZV}). The product of any two generic multiple q-zeta values is a $\QQ$-linear combination of generic multiple q-zeta values.
\begin{defi*} The \emph{algebra of multiple q-zeta values} is the subalgebra of $\QQ\llbracket q\rrbracket$ given by
\[\Zq=\spanQ\!\big\{\zq(s_1,...,s_l;R_1,...,R_l)\ \big|\ l\geq 0,s_1\geq 1,\ s_2,...,s_l\geq 0,\deg(R_j)\leq s_j\big\},\]
where we set $\zeta_q(\emptyset;\emptyset)=1$.
\end{defi*} \noindent
The additional requirement on the degree of the polynomials is natural. For example, $\Zq$ is related to polynomial functions on partitions (\cite{ba}, \cite{bi}), which implies the existence of spanning sets of $\Zq$ invariant under some involution. The space $\Zq$ also occurs in enumerative geometry. More precisely, A. Okounkov conjectured that certain generating series of Chern numbers on Hilbert schemes of points are always contained in the space $\Zq$ (\cite{ok}). 
\vspace{0,2cm}\\
A \emph{model} of multiple q-zeta values is a particular assumption on the polynomials $R_i$ (usually these particular polynomials form a basis of $\QQ[t]$) or just a spanning set of $\Zq$. About twenty years ago the first models for multiple q-zeta values were introduced independently by D. Bradley, K. Schlesinger, J. Zhao, and W. Zudilin (\cite{bra}, \cite{schl}, \cite{zh2}, \cite{zu2}). Since then several more models appeared in the literature, all of them are contained in the above defined space $\Zq$. An overview on the various models of multiple q-zeta values and their relations is given in \cite{bk} and \cite{bri}. \\
In the first articles about multiple q-zeta values the focus lies on a q-analog of the shuffle product given in \eqref{stuffle MZV} or the stuffle product given in \eqref{shuffle MZV} for multiple zeta values but not on the combination of them. Later, q-analogs of the double shuffle relation became of more interest and were developed by using q-analogs of multiple polylogarithms (\cite{ta}) or Rota-Baxter operators (\cite{cem}). In rather recent articles on multiple q-zeta values (\cite{ba}, \cite{ems}) the focus changed to obtaining a product formula and some invariance under an involution. Combining the product formula and the involution one easily derives a q-analog of the extended double shuffle relations. In joint work with H. Bachmann (\cite{bb}), we constructed the combinatorial bi-multiple Eisenstein series, which satisfy a weight-graded product formula and are invariant under some weight-homogeneous involution. 

\noindent \vspace{-0,5cm} \\
\paragraph{\textbf{Balanced multiple q-zeta values.}} 
A main result of this article gives a new model of multiple q-zeta values, which also satisfies weight-homogeneous relations and is closely related to the Schlesinger-Zudilin multiple q-zeta values studied in \cite{ems}. We call the elements in this model the \emph{balanced multiple q-zeta values} $\zq(s_1,\ldots,s_l),\ s_1\geq1,\ s_2,\ldots,s_l\geq0$ (Definition \ref{def balanced qMZV}). The classical Eisenstein series and their derivatives are examples of balanced multiple q-zeta values. Moreover, the balanced multiple q-zeta values can be used to conjecturally describe all relations among multiple Eisenstein series. This new model also gives an explicit description of a conjectural weight-grading on the algebra $\Zq$, which extends the weight-grading of the algebra $\quasimod$ of quasi-modular forms with rational coefficients (introduced in \cite{kz}). The balanced multiple q-zeta values are q-analogs of multiple zeta values, taking the limit $q\to 1$ always yields an element in the algebra of multiple zeta values (Proposition \ref{non-regularized limit balanced qMZV}, Remark \ref{regularized limit balanced qMZV}). An advantage of the balanced multiple q-zeta values is that they satisfy a product formula, which can be seen as a balanced combination of the shuffle and stuffle product for the multiple zeta values (Theorem \ref{balanced qMZV quasi-shuffle morphism}). Therefore, they provide a setup to study both products of the multiple zeta values at the same time. Moreover, the balanced multiple q-zeta values satisfy linear relations coming from a particular simple involution. They also possess a description in terms of a finite alphabet similar to the case of multiple zeta values (Theorem \ref{grSZ q-shuffle}).
\vspace{0,2cm} \\
Consider the alphabet 
\[\B=\{b_0,b_1,b_2,\ldots\}\] 
and denote by $\QB$ the free non-commutative $\QQ$-algebra generated by $\B$. The \emph{balanced quasi-shuffle product} $\bst$ is the quasi-shuffle product on $\QB$ recursively defined by $\1\bst w=w\bst \1=w$ and
\begin{align} \label{def q-stuffle intro}
b_iu\bst b_jv=b_i(u\bst b_jv)+b_j(b_iu\bst v)+\begin{cases} b_{i+j}(u\bst v) & \text{ if } i,j\geq 1, \\ 0 & \text{ else}
\end{cases}
\end{align}
for all $b_i,b_j\in \B,\ u,v,w\in \QB$. Restricted to the letters $b_i$ for $i\geq 1$ the product $\bst$ is the stuffle product given in \eqref{stuffle product} and modulo words containing the letters $b_i$ for $i\geq2$ we recover the shuffle product given in \eqref{shuffle product}, therefore we call $\bst$ the balanced quasi-shuffle product. The product $\bst$ is exactly the associated weight-graded product to the quasi-stuffle product of the Schlesinger-Zudilin multiple q-zeta values (\cite[Theorem 3.3]{si}). \\
Denote by $\QB^0$ the subalgebra of $\QB$ generated by all words which do not start in $b_0$, i.e., we have $\QB^0=\QB\backslash b_0\QB$. Let $\tau:\QB^0\to \QB^0$ be the involution given by $\tau(\1)=\1$ and
\begin{align*}
\tau(b_{k_1}b_0^{m_1}\dots b_{k_d}b_0^{m_d})=b_{m_d+1}b_0^{k_d-1}\dots b_{m_1+1}b_0^{k_1-1}.
\end{align*} 
A main result of this work is the following.
\begin{thm*}[Theorem \ref{balanced qMZV quasi-shuffle morphism}] \label{intro graded SZ qMZV} There is a $\tau$-invariant, surjective algebra morphism
\begin{align*}
(\QB^0, \bst)&\to\Zq, \\
b_{s_1}\dots b_{s_l}&\mapsto \zq(s_1,\ldots,s_l), 
\end{align*}
where $\zq(s_1,\ldots,s_l)$ are the balanced multiple q-zeta values introduced in Definition \ref{def balanced qMZV}.
\end{thm*}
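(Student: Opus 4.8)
The plan is to realise the asserted map from the generating-series construction behind Definition~\ref{def balanced qMZV}, and then to verify its four defining features in turn: it is well defined with image in $\Zq$, it is an algebra morphism for $\bst$, it is surjective, and it intertwines $\tau$ with the identity. First observe that $\QB^0$ is a $\bst$-subalgebra of $\QB$: by \eqref{def q-stuffle intro} the product of two words beginning with a letter $\ge 1$ produces only words beginning with a letter $\ge 1$. Since the balanced multiple q-zeta values are, by construction, the coefficients of an explicit bivariate generating series whose entries are generic multiple q-zeta values meeting the degree bound $\deg R_j\le s_j$ from the definition of $\Zq$, the assignment $b_{s_1}\cdots b_{s_l}\mapsto \zq(s_1,\dots,s_l)$ extends $\QQ$-linearly to a well-defined map $\QB^0\to\Zq$.

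For the algebra-morphism property I would invoke the general dictionary of Section~\ref{quasi-shuffle and gen series}. The product $\bst$ is a Hoffman quasi-shuffle on $\QB$ in the sense of Section~\ref{quasi-shuffle algebras}, with commutative bracket $b_i\diamond b_j=b_{i+j}$ for $i,j\ge 1$ and $b_i\diamond b_j=0$ when $i=0$ or $j=0$; the dictionary attaches to such a product a precise symmetry condition (of ``symmetril'' type, in Ecalle's language for bimoulds) that a generating series must satisfy for the induced map on words to be multiplicative. The task thus reduces to checking that the balanced generating series satisfies exactly this condition, which is essentially forced by the way that series is constructed. Concretely, in depth two this is the identity obtained by expanding products of depth-one balanced q-zeta values via their defining nested sums, and in general depth it is the recursion of \cite[Proposition~8]{ih} transported to the alphabet $\B$, using that $\bst$ restricted to the $b_i$ with $i\ge 1$ is the stuffle product \eqref{stuffle product} and, modulo words containing some $b_i$ with $i\ge 2$, is the shuffle product \eqref{shuffle product}.

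Surjectivity I would deduce by comparison with the Schlesinger--Zudilin model: those q-zeta values span $\Zq$ (\cite{ems}), and by \cite[Theorem~3.3]{si} the product $\bst$ is exactly the weight-graded product associated to their quasi-stuffle product, so that the balanced multiple q-zeta values are the corresponding weight-homogeneous representatives. A leading-term argument along the resulting weight filtration of $\Zq$ then shows that the $\zq(s_1,\dots,s_l)$ already span $\Zq$, and the morphism is onto; alternatively surjectivity is visible from the finite-alphabet description of Theorem~\ref{grSZ q-shuffle}.

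The remaining point, which I expect to be the main obstacle, is $\tau$-invariance. Under the correspondence between words in $b_0,b_{\ge 1}$ and bivariate generating series underlying Theorem~\ref{grSZ q-shuffle}, the involution $\tau(b_{k_1}b_0^{m_1}\cdots b_{k_d}b_0^{m_d})=b_{m_d+1}b_0^{k_d-1}\cdots b_{m_1+1}b_0^{k_1-1}$ translates into the ``swap'' of the associated bimould, so that $\tau$-invariance of the word map is equivalent to swap-invariance of the balanced generating series. I would establish the latter either directly, by exhibiting the symmetry of the defining sum (equivalently its integral or partition representation) under the reflection $(k_i,m_i)\mapsto(m_{d+1-i}+1,\,k_{d+1-i}-1)$, or --- more in the spirit of this paper --- by transporting the known involution-invariance of the combinatorial bi-multiple Eisenstein series of \cite{bb} across the isomorphism between the underlying Hopf algebras of words. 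Once the balanced generating series is known to carry both the $\bst$-symmetry and the swap symmetry, pulling these back along the generating-series realisation of $\B$ yields at once the algebra-morphism property and the $\tau$-equivariance on $\QB^0$, which together with surjectivity completes the proof.
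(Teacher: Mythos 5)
Your overall architecture (realise the map through the generating series of Definition \ref{def balanced qMZV}, then check well-definedness, multiplicativity, surjectivity and $\tau$-equivariance) is sound, and your ``more in the spirit of this paper'' alternative for the involution --- transporting the swap invariance of the combinatorial bi-multiple Eisenstein series across the word-level isomorphism --- is exactly what the paper does. But your plan for the product formula has a genuine gap. You propose to verify the b-symmetrility of $\mathfrak{B}$ directly, ``by expanding products of depth-one balanced q-zeta values via their defining nested sums'' and by transporting the recursion of \cite[Proposition 8]{ih}. The balanced multiple q-zeta values have no defining nested-sum representation: by Definition \ref{def balanced qMZV} they are the coefficients of $\mathfrak{G}^{\#_Y^{-1}}$, where $\mathfrak{G}$ is built from a rational solution $\beta$ of the extended double shuffle equations (via $\mathfrak{b}$, $\mathfrak{L}^{(u)}$, $\mathfrak{g}^\ast$), and already in depth two the coefficients contain the constants $\beta_\ast(k_1,k_2)$. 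The identity you want to check is, after applying $\#_Y$ and Corollary \ref{symmetril and b-symmetril} (which rests on Theorem \ref{varphi sharp alg hom}), precisely the symmetrility of $\mathfrak{G}$, i.e.\ Theorem \ref{G symmetril, swap invariant} --- the main theorem of \cite{bb}. It is not ``forced by the construction'' in any way you can verify by power-series manipulation; it has to be imported from \cite{bb} and pulled back through $\#_Y$, which is what the paper's one-line proof does (b-symmetrility and $\tau$-invariance of $\mathfrak{B}$ via Corollaries \ref{symmetril and b-symmetril} and \ref{swap invariant and tau invariant}, plus bijectivity of $\#_Y$). The same objection applies to your first suggested route to $\tau$-invariance (a reflection symmetry of ``the defining sum''): only your second route is available.

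Your surjectivity argument also does not go through as stated. The comparison with the Schlesinger--Zudilin model only tells you that $\bst$ is the associated weight-graded product of their quasi-stuffle product (\cite[Theorem 3.3]{si}); it gives no identification of $\zq(s_1,\ldots,s_l)$ as a ``weight-homogeneous representative'' of a Schlesinger--Zudilin value, and the weight grading/filtration of $\Zq$ you want to run a leading-term argument along is exactly what is conjectural here (Conjecture \ref{all relations in Zq balanced qMZV}). Your fallback, Theorem \ref{grSZ q-shuffle}, is circular: its proof in the paper uses the surjectivity and $\tau$-invariance of the present theorem. The correct argument is immediate from the paper's setup: the combinatorial bi-multiple Eisenstein series span $\Zq$ (Proposition \ref{cmes spanning set}, equivalently the surjectivity of $G$ in Corollary \ref{quasish cmes}), and since $\zq(s_1,\ldots,s_l)=G(\varphi_{\#}^{-1}(b_{s_1}\dots b_{s_l}))$ with $\varphi_{\#}$ bijective (Theorem \ref{varphi sharp alg hom}), the balanced values span the same space.
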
 \noindent
To the balanced multiple q-zeta value $\zq(s_1,\ldots,s_l)$ associate the \emph{weight}
\[s_1+\dots+s_l+\#\{i\mid s_i=0\}.\] 
The balanced quasi-shuffle product as well as the $\tau$-invariance of the balanced multiple q-zeta values are homogeneous for the weight. Since we expect that all algebraic relations in $\Zq$ can be deduced from these to sets of relations among balanced multiple q-zeta values (Conjecture \ref{all relations in Zq balanced qMZV}), we expect the algebra $\Zq$ to be graded with respect to this weight.
\vspace{0,2cm} \\
Essential for the construction of the balanced multiple q-zeta values are the \emph{combinatorial bi-multiple Eisenstein series} introduced in \cite{bb} (Definition \ref{def cmes}), which also possess a description in terms of quasi-shuffle algebras. Consider the alphabet $\Ybi=\{y_{k,m}\mid k\geq1, m\geq0\}$ and let $\QYbi$ be the non-commutative $\QQ$-algebra generated by $\Ybi$. The \emph{stuffle product} $\ast$ on $\QYbi$ is recursively defined by $\1\ast w=w\ast \1=w$ and
\[y_{k_1,m_2}u\ast y_{k_2,m_2}v=y_{k_1,m_1}(u\ast y_{k_2,m_2}v)+y_{k_2,m_2}(y_{k_1,m_1}u\ast v)+y_{k_1+k_2,m_1+m_2}(u\ast v)\]
for all $u,v,w\in \QYbi$. This is the canonical bi-version of the stuffle product given in \eqref{stuffle product}. Moreover, there is an involution $\swap:\QYbi\to \QYbi$ (Definition \ref{def swap}) closely related to conjugation of partitions. \\
Following the general explanations in Section \ref{quasi-shuffle and gen series}, we will express the stuffle product $\ast$ on $\QYbi$ and the balanced quasi-shuffle product $\bst$ on $\QB^0$ by a recursive formula on generating series of words (Theorem \ref{stuffle Ybi gen series}, \ref{balanced quasi-shuffle gen series}). Moreover, we will obtain a \emph{regularization map}
\[\reg:(\QB,\bst)\to (\QB^0,\bst),\] 
which also possesses a description on generating series of words (Theorem \ref{reg on gen series}). This regularization allows defining a \emph{regularized coproduct} $\dec^0:\QB^0\to\QB^0\otimes\QB^0$ compatible with the balanced quasi-shuffle product. The lift to generating series of words allows us to show the following.
\begin{thm*}[Theorem \ref{Hopf iso Ybi B}] There is an isomorphism of weight-graded Hopf algebras
\begin{align*} \varphi_{\#}:(\QYbi,\ast,\dec)\to(\QB^0,\bst,\dec^0),
\end{align*}
which satisfies $\varphi_{\#}\circ\swap=\tau\circ\varphi_{\#}$.
\end{thm*}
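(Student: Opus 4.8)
The plan is to construct $\varphi_{\#}$ directly on generating series of words and then to read off the algebra, coalgebra and equivariance statements from the recursive descriptions established in Section~\ref{quasi-shuffle and gen series}. As preliminary bookkeeping, note that a word $y_{k_1,m_1}\cdots y_{k_d,m_d}\in\QYbi$ (with $k_i\geq 1$, $m_i\geq 0$) and the word $b_{k_1}b_0^{m_1}\cdots b_{k_d}b_0^{m_d}\in\QB^0$ both have weight $k_1+\dots+k_d+m_1+\dots+m_d$, and that every word of $\QB^0$ is uniquely of the latter shape; hence the two graded vector spaces are abstractly isomorphic in each weight, and it suffices to produce $\varphi_{\#}$ weight by weight.

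First I would encode the words of $\QYbi$ and the words $b_{k_1}b_0^{m_1}\cdots b_{k_d}b_0^{m_d}$ of $\QB^0$ into generating series and invoke Theorem~\ref{stuffle Ybi gen series} and Theorem~\ref{balanced quasi-shuffle gen series} to realize $\ast$ and $\bst$ as the two associated symmetries of generating series in the sense of Section~\ref{quasi-shuffle and gen series}. The map $\varphi$ of that section is precisely the change of variables intertwining these two symmetry types; composing it with the regularization $\reg$ of Theorem~\ref{reg on gen series}, whose effect on generating series is explicit, yields a weight-homogeneous map $\varphi_{\#}=\reg\circ\varphi$ which one checks takes values in $\QB^0$. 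Since $\varphi$ intertwines the symmetries and $\reg\colon(\QB,\bst)\to(\QB^0,\bst)$ is an algebra morphism, $\varphi_{\#}$ is an algebra morphism $(\QYbi,\ast)\to(\QB^0,\bst)$. For the coalgebra structures, both $\dec$ and $\dec^0$ are deconcatenation-type coproducts, $\dec^0$ being obtained from deconcatenation by applying $\reg$ to the tensor factors so that they remain in $\QB^0$, which is exactly what makes $\reg$ — and hence $\varphi_{\#}$ — a coalgebra morphism, the generating-series lift respecting deconcatenation for free. Thus $\varphi_{\#}$ is a morphism of bialgebras, and since $\QYbi$ and $\QB^0$ are connected graded it is automatically a morphism of Hopf algebras.

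Bijectivity then follows in each weight either by exhibiting the inverse change of variables or, more simply, by a triangularity argument: the correction terms produced by $\varphi$ and $\reg$ involve only words of strictly smaller depth, so $\varphi_{\#}\big(y_{k_1,m_1}\cdots y_{k_d,m_d}\big)=b_{k_1}b_0^{m_1}\cdots b_{k_d}b_0^{m_d}$ modulo lower-depth words, and since the relevant index sets on the two sides agree in each weight the transition matrix is unitriangular.

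It remains to check $\varphi_{\#}\circ\swap=\tau\circ\varphi_{\#}$. I would express both $\swap$ (Definition~\ref{def swap}) and $\tau$ as substitutions on the respective generating series and verify that the square commutes after the change of variables $\varphi$, using that $\reg$ intertwines the two involutions appropriately. This is the main obstacle: the products, the coproducts and the morphism property are all governed uniformly by the machinery of Section~\ref{quasi-shuffle and gen series}, whereas $\swap$ and $\tau$ are defined by genuinely different recipes — $\swap$ encoding the partition-conjugation symmetry used for the combinatorial bi-multiple Eisenstein series of \cite{bb}, and $\tau$ being the explicit block reversal $b_{k_1}b_0^{m_1}\cdots b_{k_d}b_0^{m_d}\mapsto b_{m_d+1}b_0^{k_d-1}\cdots b_{m_1+1}b_0^{k_1-1}$ — so the heart of the matter is to show that $\varphi$, together with the regularization, carries one exactly onto the other.
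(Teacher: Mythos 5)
Your overall strategy (encode both sides into generating series and transport one quasi-shuffle recursion onto the other by a change of variables) is the same as the paper's, but as written the proposal leaves the genuinely substantive steps unproved. First, the change of variables is never identified: it is the substitution $\#_Y\colon Y_i\mapsto Y_1+\dots+Y_i$, and $\varphi_{\#}$ is defined by requiring $(\varphi_{\#}\otimes\rho_{\Ybi})(\mathcal{W})_d=\GenBo_d^{\#_Y}$ (Definition \ref{def varphi}); no composition with $\reg$ is needed, since $\GenBo^{\#_Y}$ already has coefficients in $\QB^0$ and $\reg$ restricts to the identity there, so writing $\varphi_{\#}=\reg\circ\varphi$ either is redundant or refers to an unspecified map. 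More importantly, Section \ref{quasi-shuffle and gen series} supplies no intertwiner between the two symmetry types: the claim that the change of variables carries the recursion of Theorem \ref{balanced quasi-shuffle gen series} onto that of Theorem \ref{stuffle Ybi gen series} is precisely Theorem \ref{varphi sharp alg hom}, whose proof is an honest computation (one must track how, after the substitution, the permuted top-depth terms and the lower-depth ``diagonal'' terms of the $\bst$-recursion reassemble into the $\ast$-recursion, using that consecutive accumulated $Y$-sums telescope). You simply assert this. Likewise the coproduct compatibility is not ``for free'': $\dec$ does not preserve $\QB^0$, $\dec^0=(\reg\otimes\reg)\circ\dec$ is only defined through the regularization, and one has to check via Propositions \ref{dec gen series Ybi} and \ref{dec^0 gen series B} that the shifted variables $Y_{i+1}-Y_i,\ldots,Y_d-Y_i$ in the second tensor factor are exactly undone by the $\#_Y$-twist; this is Proposition \ref{compatibility dec coproducts}, a short but necessary computation.

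Two further points. The triangularity argument for bijectivity is incorrect as stated: $\varphi_{\#}$ preserves depth, and its correction terms are words of the \emph{same} depth with the letters $b_0$ redistributed, e.g. $\varphi_{\#}(y_{k_1,m_1}y_{k_2,m_2})=\sum_{n=m_2}^{m_1+m_2}\tfrac{m_1!\,n!}{(n-m_2)!}\,b_{k_1}b_0^{m_1+m_2-n}b_{k_2}b_0^{n}$, so there are no ``lower-depth'' corrections; bijectivity instead follows at once because $\#_Y$ is an invertible substitution (its inverse is $Y_i\mapsto Y_i-Y_{i-1}$), hence $\varphi_{\#}$ is invertible weight by weight. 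Finally, the identity $\varphi_{\#}\circ\swap=\tau\circ\varphi_{\#}$, which you defer as ``the main obstacle,'' is in fact the easy part once $\#_Y$ is explicit: applying $\#_Y\circ\swap$ to $\GenYbi$ and applying $\tau$ to $\GenBo^{\#_Y}$ both yield the series with upper row $Y_1+\dots+Y_d,\,Y_1+\dots+Y_{d-1},\ldots,Y_1$ and lower row $X_d,X_{d-1},\ldots,X_1$, because the differences $X_{i-1}-X_i$ produced by $\swap$ telescope back under $\#_Y$; the claimed intertwining then follows directly from the defining property of $\varphi_{\#}$. So the steps you treat as automatic are where the real work lies, while the step you leave open is a one-line substitution check.
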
 \noindent
The stuffle product $\ast$ as well as the involution $\swap$ on $\QYbi$ can be rephrased in terms of bimoulds (compare to Section \ref{quasi-shuffle and gen series}), this leads to the symmetries usually called \emph{symmetril} and \emph{swap invariant} (those symmetries are extensively studied in \cite{ec}, \cite{sc}). An example of a symmetril and swap invariant bimould is the bimould $\mathfrak{G}=(\mathfrak{G}_d)_{d\geq0}$ of generating series of the combinatorial bi-multiple Eisenstein series (Definition \ref{def G}). On the other hand, the balanced quasi-shuffle product $\bst$ and the involution on $\QB^0$ also lead to symmetries of bimoulds, which we will refer to as \emph{b-symmetril} and \emph{$\tau$-invariant}. Theorem \ref{Hopf iso Ybi B} gives an isomorphism between symmetril and swap invariant bimoulds and b-symmetril and $\tau$-invariant bimoulds. Applying this isomorphism to the bimould $\mathfrak{G}=(\mathfrak{G}_d)_{d\geq0}$ of generating series of the combinatorial bi-multiple Eisenstein series yields the bimould $\mathfrak{B}=(\mathfrak{B}_d)_{d\geq0}$ of generating series of the balanced multiple q-zeta values (Definition \ref{def balanced qMZV}). For example, the b-symmetrility of $\mathfrak{B}$ in depth $2$ reads \\
\scalebox{0.95}{\parbox{.5\linewidth}{%
\begin{align*}
\Gzq{1}{X_1}{Y_1}\cdot \Gzq{1}{X_2}{Y_2}= \Gzq{2}{X_2,X_1}{Y_2,Y_1+Y_2}+\Gzq{2}{X_1,X_2}{Y_1,Y_1+Y_2}+\frac{\Gzq{1}{X_1}{Y_1+Y_2}-\Gzq{1}{X_2}{Y_1+Y_2}}{X_1-X_2}.
\end{align*} }} \\
It combines the product formulas on the generating series of the stuffle and shuffle regularized multiple zeta values given in \eqref{product MZV gen series}. The results in Section \ref{Comparison Ybi B} for the generating series of words allow us to deduce the previously presented properties of the balanced multiple q-zeta values from the b-symmetrility and the $\tau$-invariance of $\mathfrak{B}$.

\noindent \vspace{-0,5cm}\\
\paragraph{\textbf{Structure of the paper.}} We start by recalling Hoffman's quasi-shuffle Hopf algebras (\cite{h}) in Section \ref{quasi-shuffle algebras}. Then in Section \ref{quasi-shuffle and gen series} we develop a general approach relating quasi-shuffle products to symmetries among generating series resp. bimoulds. These general observations will be applied to the quasi-shuffle algebras $(\QYbi,\ast)$ and $(\QB^0,\bst)$ in Section \ref{gen series Ybi section} and \ref{gen series B section}. In Section \ref{gen series B regularization}, we will introduce the regularization and the regularized coproduct for $(\QB^0,\bst)$ and reformulate this in terms of generating series. This allows to obtain the Hopf algebra isomorphism $(\QYbi,\ast,\dec)\to (\QB^0,\bst,\dec^0)$ in Section \ref{Comparison Ybi B}. In Section \ref{qMZV} we recall the definition of the algebra $\Zq$ of multiple q-zeta values (as given in \cite{bk}) and explain shortly its relations to multiple zeta values, quasi-modular forms, and partitions. Then in Section \ref{CMES} we introduce the combinatorial bi-multiple Eisenstein series constructed in \cite{bb}, which form a spanning set of $\Zq$. Applying the results in Section \ref{Comparison Ybi B} to the combinatorial bi-multiple Eisenstein series, we will obtain the balanced multiple q-zeta values and some of its most important properties in Section \ref{balanced qMZV}. Finally, we give some more properties of the balanced multiple q-zeta values in Section \ref{further properties balanced qMZV}.

\noindent \vspace{-0,5cm}\\
\paragraph{\textbf{Acknowledgment.}} This work was mainly part of my PhD thesis. Therefore, I deeply thank my supervisor Ulf Kühn for many helpful comments and discussions on these contents. Moreover, I would like to thank Claudia Alfes-Neumann, Henrik Bachmann, Jan-Willem van Ittersum, and Koji Tasaka for valuable comments on these topics within my PhD project or on earlier versions of this paper.

\section{Quasi-shuffle Hopf algebras} \label{quasi-shuffle algebras}
	
\noindent
We give a short introduction to quasi-shuffle Hopf algebras, which were introduced and studied in \cite{h} and \cite{hi}. They will be used later to describe the product of the algebra of multiple q-zeta values in terms of particular spanning sets.
\vspace{0,2cm} \\
In the following, $R$ is some arbitrary fixed $\QQ$-algebra.
Let $\A$ be an \emph{alphabet}, this means $\A$ is a countable set whose elements are called \emph{letters}. By $R \A$ denote the $R$-module spanned by the letters of $\A$ and let $\nca{R}{\A}$ be the free non-commutative algebra generated by the alphabet $\A$. The monic monomials in $\nca{R}{\A}$ are called \emph{words} with letters in $\A$, and the set of all words is denoted by $\A^*$. Moreover, let $\1$ be the empty word.
\begin{defi} \label{def quasi-shuffle}
Let $\diamond:R \A\times R \A\to R \A$ be an associative and commutative product. Define the \emph{quasi-shuffle product} $\ast_\diamond$ on $\nca{R}{\A}$ recursively by $\1\ast_\diamond w=w\ast_\diamond\1=w$ and 
\begin{align*}
au\ast_\diamond bv=a(u\ast_\diamond bv)+b(au\ast_\diamond v)+(a\diamond b)(u\ast_\diamond v)
\end{align*}
for all $u,v,w\in \nca{R}{\A}$ and $a,b\in \A$.
\end{defi} \noindent
Note that a quasi-shuffle product $\quasish$ can be equally defined recursively from the left and from the right.
\begin{ex} \label{ex quasi-shuffle products}
1. Define \[a\diamond b=0 \text{ for all } a,b\in \A, \]
then we get the well-known \emph{shuffle product}, which is usually denoted by $\shuffle$.
\vspace{0,2cm} \\
2. Consider the bi-alphabet $\Ybi=\{y_{k,m}\mid k\geq1,\ m\geq0\}$ and on $R\Ybi$ define the product 
\begin{align*}
y_{k_1,m_1}\diamond y_{k_2,m_2}= y_{k_1+k_2,m_1+m_2}.
\end{align*}
The obtained quasi-shuffle product is called the \emph{stuffle product} and is denoted by $\ast$. It appears in the context of the combinatorial bi-multiple Eisenstein series (Corollary \ref{quasish cmes}) and is exactly the associated weight-graded product to the quasi-shuffle product of Bachmann's bi-brackets (\cite[Theorem 3.6]{ba}).
\vspace{0,2cm} \\
3. Consider the alphabet $\B=\{b_0,b_1,b_2,\ldots\}$ and define on $R\B$ the product
\[b_i\qdia b_j=\begin{cases} b_{i+j}, &\text{if } i,j\geq 1, \\ 0 &\text{else}. \end{cases}\] 
This quasi-shuffle product occurs for balanced multiple q-zeta values (Theorem \ref{balanced qMZV quasi-shuffle morphism}) and will be called the \emph{balanced quasi-shuffle product}, it is denoted by $\bst$. It is exactly the associated weight-graded product to the quasi-shuffle product of the Schlesinger-Zudilin multiple q-zeta values (\cite[Theorem 3.3]{si}).
\end{ex} \noindent
Let $\dec:\Qnca{\A}\to\Qnca{\A}\otimes\Qnca{\A}$ be the \emph{deconcatenation product}, i.e., we have
\begin{align} \label{deconcatenation coproduct}
\dec(w)=\sum_{uv=w} u\otimes v \quad \text{ for each word } w\in \Qnca{\A}.
\end{align}
We close this section with the following structure theorem of Hoffman.
\begin{thm} (\cite[Theorem 3.2, 3.3]{h}) The tuple $(\nca{R}{\A},\quasish,\dec)$ is an associative, commutative Hopf algebra. There is an isomorphism between all quasi-shuffle Hopf algebras defined over the same alphabet given by some exponential map. \qed 
\end{thm}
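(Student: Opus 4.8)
The plan is to verify, in order, that $\quasish$ is commutative and associative, that $\dec$ upgrades this to a bialgebra, that an antipode exists, and finally to produce Hoffman's exponential isomorphism; of these, only associativity and the exponential map ask for real work. Commutativity, and the remark preceding the theorem that $\quasish$ may be defined recursively from either side, both come from a single induction on the total length $|u|+|v|$: expanding $au\quasish bv$ and $bv\quasish au$ (resp.\ the left- and the right-recursive expansions) from the same end, the three pairs of terms correspond to one another using only that $\diamond$ is commutative.

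For associativity I would not push the induction through directly but instead pass to the explicit combinatorial model. For words $u=a_1\cdots a_m$ and $v=b_1\cdots b_n$ one has $u\quasish v=\sum_\phi c_1\cdots c_k$, the sum taken over all surjections $\phi\colon\{1,\dots,m\}\sqcup\{1,\dots,n\}\to\{1,\dots,k\}$ with $\max(m,n)\le k\le m+n$ that are strictly increasing on each of the two blocks, where $c_j$ is the $\diamond$-product of the letters indexed by $\phi^{-1}(j)$ (which meets each block in at most one element). One checks that this formula satisfies the recursion of Definition~\ref{def quasi-shuffle}, hence equals $\quasish$; associativity is then immediate, since both $(u\quasish v)\quasish w$ and $u\quasish(v\quasish w)$ unfold, using associativity and commutativity of $\diamond$, to the evident sum over strictly-increasing-on-each-block surjections out of the union of three position sets. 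Matching the combinatorial formula against the recursion --- pinning down exactly which surjections and merges occur, and with which multiplicities --- is the step I expect to be the main technical obstacle.

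The coalgebra side is free: $\dec$ is the deconcatenation coproduct on a tensor algebra, hence coassociative and counital with counit $\varepsilon$ the projection onto $R\1$. For the bialgebra compatibility $\dec(u\quasish v)=\dec(u)\quasish\dec(v)$ (product on $\nca{R}{\A}\otimes\nca{R}{\A}$ taken componentwise) I would induct on $|u|+|v|$: with $u=au'$, $v=bv'$, expand the left-hand side using the $\quasish$-recursion together with $\dec(aw)=\1\otimes aw+(L_a\otimes\mathrm{id})\dec(w)$, where $L_a$ is concatenation by the letter $a$; the merging term is handled the same way with the letter $a\diamond b\in R\A$, and the result matches the expansion of the right-hand side. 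Thus $(\nca{R}{\A},\quasish,\dec,\varepsilon)$ is a bialgebra. Now $\dec$ is graded and connected for word length, while $\quasish$ only respects the associated filtration (it need not be graded, since merging lowers length), so this is a connected filtered bialgebra and therefore automatically a Hopf algebra; concretely $S(\1)=\1$ and $S(w)=-\sum S(w_1)\quasish w_2$ over decompositions $w=w_1w_2$ with $w_2\neq\1$, and one checks --- directly or by the general theory of connected filtered bialgebras --- that this is a two-sided antipode, $S\quasish\mathrm{id}=\mathrm{id}\quasish S=\eta\varepsilon$ in the convolution algebra.

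Finally, for the isomorphism I would follow Hoffman. Extend $\diamond$ to iterated brackets, and for a word $w$ of length $n$ and a composition $I=(i_1,\dots,i_\ell)$ of $n$ let $I[w]$ be the length-$\ell$ word obtained by $\diamond$-multiplying the first $i_1$ letters of $w$, then the next $i_2$, and so on. Set $\exp_\diamond(w)=\sum_I\frac{1}{i_1!\cdots i_\ell!}\,I[w]$ and $\log_\diamond(w)=\sum_I\frac{(-1)^{n-\ell}}{i_1\cdots i_\ell}\,I[w]$, both sums over compositions $I$ of $n=|w|$. A generating-function manipulation over compositions shows $\exp_\diamond$ and $\log_\diamond$ are mutually inverse; the computational heart of the argument is that $\exp_\diamond\colon(\nca{R}{\A},\shuffle)\to(\nca{R}{\A},\quasish)$ is an algebra homomorphism (already in length one, $\exp_\diamond(a\shuffle b)=ab+ba+a\diamond b=a\quasish b$), and since $I[w]$ is assembled block-by-block it also commutes with $\dec$, so $\exp_\diamond$ is a Hopf algebra isomorphism $(\nca{R}{\A},\shuffle,\dec)\to(\nca{R}{\A},\quasish,\dec)$. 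For any two products $\diamond_1,\diamond_2$ on $R\A$ the composite $\exp_{\diamond_2}\circ\log_{\diamond_1}$ is then the asserted isomorphism of quasi-shuffle Hopf algebras over $\A$.
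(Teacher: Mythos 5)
Your proposal is correct; the paper itself gives no argument here but simply cites Hoffman's Theorems 3.2 and 3.3, and your sketch is essentially a faithful reconstruction of that cited proof (the surjection/stuffle description of $\quasish$ for associativity, compatibility of $\quasish$ with $\dec$, the antipode from connected-filtered generalities, and Hoffman's $\exp_\diamond$/$\log_\diamond$ with the correct coefficients $1/(i_1!\cdots i_\ell!)$ and $(-1)^{n-\ell}/(i_1\cdots i_\ell)$). Your final step of composing $\exp_{\diamond_2}\circ\log_{\diamond_1}$ to compare two arbitrary quasi-shuffle structures on the same alphabet is exactly how the blanket statement in the theorem follows from Hoffman's result, so nothing is missing.
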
 \noindent
It is well-known that the shuffle algebra $(\Qnca{\A},\shuffle)$ is a free polynomial algebra on the Lyndon words (\cite[Theorem 4.9 (ii)]{re}). The theorem shows that this holds for any quasi-shuffle algebra.

\section{Quasi-shuffle products and generating series} \label{quasi-shuffle and gen series}

\noindent
Inspired by \cite[Section 7]{ih}, we will explain how quasi-shuffle products can be translated into symmetries of commutative generating series resp. bimoulds. This gives a connection between Hoffman's theory of quasi-shuffle Hopf algebras (\cite{h}, \cite{hi}) and Ecalle's theory of bimoulds (\cite{ec}, \cite{sc}). We will use this general setup later to construct a particular nice-behaving model for multiple q-zeta values and explain its properties.
\vspace{0,2cm} \\
Consider some given quasi-shuffle algebra $(\Qnca{\A},\quasish)$. Define the \emph{generic diagonal series} of $\Qnca{\A}$ by
\begin{align*}
\mathcal{W}(\A)=\sum_{w\in\A^*} w\otimes w.
\end{align*}
We want to apply $\QQ$-linear maps to the first factors, usually denoted by $\varphi$, or to the second factors, usually denoted by $\rho$, of $\mathcal{W}(\A)$ to get generating series of different kinds and describe the resulting properties. We begin with an abstract discussion and later a more detailed explanation of special cases is given.
\vspace{0,2cm} \\
Let $\dep:\A^*\to \ZZ_{\geq0}$ be a \emph{depth map} compatible with concatenation, i.e., we have 
\[\dep(uv)=\dep(u)+\dep(v),\qquad u,v\in \A^*.\] 
Denote by $(\A^*)^{(d)}$ the set of all words in $\A^*$ of depth $d$ and by $\Qnca{\A}^{(d)}$ the space spanned by $(\A^*)^{(d)}$. 
\begin{defi} \label{def gen series of words} Let $\rho_{\A}:\Qnca{\A}\to\QQ[Z_1,Z_2,\ldots]$ be a $\QQ$-linear map having the following properties with respect to the depth map:
\begin{itemize}
\item[(i)] There is a strictly increasing sequence $\ell(0)<\ell(1)<\ell(2)<\dots$ of non-negative integers, such that for each $d\geq0$ the restriction of $\rho_{\A}$ to $\Qnca{\A}^{(d)}$ is an injective $\QQ$-linear map
\[\rho_{\A}|_{\Qnca{\A}^{(d)}}:\Qnca{\A}^{(d)}\to \QQ[Z_1,\ldots,Z_{\ell(d)}].\] 
\item[(ii)] We have
\[\rho_{\A}(uv)=\rho_{\A}(u)\rho_{\A}^{[\ell(n)]}(v), \qquad u\in\Qnca{\A}^{(n)},\ v\in \Qnca{\A},\]
where $\rho_{\A}^{[n]}$ denotes the $\QQ$-linear map obtained from $\rho_{\A}$ by shifting the variables $Z_i$ to $Z_{n+i}$, so $\rho_{\A}^{[n]}\Big(\Qnca{\A}^{(d)}\Big)\subset\QQ[Z_{n+1},\ldots,Z_{n+\ell(d)}]$.
\end{itemize}
\end{defi} 
\begin{defi} \label{def Gen}
Let $\rho_{\A}:\Qnca{\A}\to\QQ[Z_1,Z_2,\ldots]$ be a $\QQ$-linear map as in Definition \ref{def gen series of words}. The (commutative) \emph{generating series of words} in $\Qnca{\A}$ associated to $\rho_{\A}$ are given by
\[\Gen_d(Z_1,\ldots,Z_{\ell(d)})=\sum_{w\in (\A^*)^{(d)}} w\rho(w)\in \Qnca{\A}\llbracket Z_1,\ldots,Z_{\ell(d)}\rrbracket , \qquad d\geq0.\]
\end{defi} \noindent
We will often drop the index $d$ and simply write $\Gen(Z_1,\ldots,Z_{\ell(d)})$.
\begin{ex} Consider the alphabet $\Y=\{y_1,y_2,y_3,\ldots\}$ and define the depth of a word in $\QY$ by
\[\dep(y_{k_1}\dots y_{k_d})=d.\]
The map 
\begin{align*}
\rho_{\Y}:\QY&\to \QQ[Z_1,Z_2,\ldots], \\
y_{k_1}\dots y_{k_d}&\mapsto Z_1^{k_1-1}\dots Z_d^{k_d-1}
\end{align*}
is a $\QQ$-linear map satisfying the conditions in Definition \ref{def gen series of words} with $\ell(d)=d$. The associated generating series of words are given by $\GenY_0=\1$ and 
\[\GenY_d(Z_1,\ldots,Z_d)=\sum_{k_1,\ldots,k_d\geq1} y_{k_1}\dots y_{k_d}Z_1^{k_1-1}\dots Z_d^{k_d-1},\qquad d\geq1.\]
\end{ex} \noindent
In the following sections, we will compute more involved examples.
\begin{prop} \label{circle}
Let $\rho_{\A}:\Qnca{A}\to\QQ[Z_1,Z_2,\ldots]$ be a $\QQ$-linear map as in Definition \ref{def gen series of words} with $\ell(d_1)+\ell(d_2)=\ell(d_1+d_2)$ for all $d_1,d_2\geq0$. Then the $\QQ\llbracket Z_1,Z_2,\ldots\rrbracket $-linear extension of the concatenation product $\cdot$ satisfies
\[\Gen_n(Z_1,\ldots,Z_{\ell(n)})\cdot \Gen_{d-n}(Z_{\ell(n)+1},\ldots,Z_{\ell(d)})=\Gen_d(Z_1,\ldots,Z_{\ell(d)})\]
for all $0\leq n\leq d$.
\end{prop}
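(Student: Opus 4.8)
The plan is to prove the identity by showing both sides are equal as formal series in $\Qnca{\A}\llbracket Z_1,\ldots,Z_{\ell(d)}\rrbracket$, comparing coefficients of the monomials $Z_1^{a_1}\cdots Z_{\ell(d)}^{a_{\ell(d)}}$ and, simultaneously, the words $w\in(\A^*)^{(d)}$ appearing as coefficients in $\Qnca{\A}$. First I would unpack the left-hand side: by definition $\Gen_n = \sum_{u\in(\A^*)^{(n)}} u\,\rho_{\A}(u)$ involves only the variables $Z_1,\ldots,Z_{\ell(n)}$, and $\Gen_{d-n}(Z_{\ell(n)+1},\ldots,Z_{\ell(d)})$ is precisely $\sum_{v\in(\A^*)^{(d-n)}} v\,\rho_{\A}^{[\ell(n)]}(v)$, where the shift appears because $\ell(d-n)=\ell(d)-\ell(n)$ by the hypothesis $\ell(d_1)+\ell(d_2)=\ell(d_1+d_2)$. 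So the product over $\QQ\llbracket Z_1,Z_2,\ldots\rrbracket$, using $\QQ\llbracket Z\rrbracket$-linearity to pull the scalar series past the concatenation in the first tensor slot, becomes
\[
\Gen_n\cdot\Gen_{d-n}=\sum_{u\in(\A^*)^{(n)}}\ \sum_{v\in(\A^*)^{(d-n)}} (uv)\,\rho_{\A}(u)\,\rho_{\A}^{[\ell(n)]}(v).
\]

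The next step is to invoke property (ii) of Definition \ref{def gen series of words}: for $u\in\Qnca{\A}^{(n)}$ and $v\in\Qnca{\A}$ we have $\rho_{\A}(uv)=\rho_{\A}(u)\,\rho_{\A}^{[\ell(n)]}(v)$. Hence the summand is exactly $(uv)\,\rho_{\A}(uv)$, and the double sum collapses to $\sum_{u,v} (uv)\,\rho_{\A}(uv)$. Then I would observe that the concatenation map $(A^*)^{(n)}\times(A^*)^{(d-n)}\to (A^*)^{(d)}$, $(u,v)\mapsto uv$, is a bijection: every word $w$ of depth $d$ has a unique factorization $w=uv$ with $\dep(u)=n$ and $\dep(v)=d-n$, since the depth map is additive under concatenation and depth-$n$ prefixes are determined by reading letters until the accumulated depth reaches $n$ — here one should note the depth of a single letter is either $0$ or a fixed positive value, or more carefully that words are finite monomials so the prefix of depth exactly $n$ is well-defined when $0\le n\le d=\dep(w)$. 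Reindexing the double sum by $w=uv$ then gives $\sum_{w\in(\A^*)^{(d)}} w\,\rho_{\A}(w)=\Gen_d(Z_1,\ldots,Z_{\ell(d)})$, which is the claim.

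The only genuinely delicate point is the bijectivity of the concatenation map on words of prescribed depth, i.e. that each depth-$d$ word factors uniquely as a depth-$n$ word followed by a depth-$(d-n)$ word. This is where one uses that $\dep$ is compatible with concatenation and takes values in $\ZZ_{\geq0}$; in all the concrete alphabets of the paper each letter has depth $0$ or $1$ (or, for $\B$, $b_0$ has depth $0$ and $b_i$ depth $1$ for $i\geq 1$, say), so scanning a word left to right the partial depths increase from $0$ to $d$ and there is a first position at which the partial depth equals $n$ together with all subsequent $b_0$'s — but to be safe the argument should be phrased so that it works whenever the depth map is additive and nonnegative, picking the shortest prefix of depth $n$; I would include a one-line remark that this prefix is unique because lengthening it can only keep or increase the depth. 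Everything else — the use of $\QQ\llbracket Z\rrbracket$-linearity to move scalars across concatenation, the role of $\ell(d_1)+\ell(d_2)=\ell(d_1+d_2)$ in matching the shifted variable ranges, and the reindexing — is routine bookkeeping. I expect the proof to be short, essentially the display above followed by the reindexing argument and the remark on unique factorization.
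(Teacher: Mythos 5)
Your computation is exactly the paper's proof: expand both series, use property (ii) of Definition \ref{def gen series of words} to rewrite $\rho_{\A}(u)\rho_{\A}^{[\ell(n)]}(v)$ as $\rho_{\A}(uv)$, and reindex the double sum over pairs $(u,v)$ as a sum over $w\in(\A^*)^{(d)}$ (the paper performs this reindexing without comment). The one point where your write-up goes astray is the proposed one-line uniqueness argument: if a prefix can be lengthened while \emph{keeping} the depth, i.e.\ if some letter has depth $0$ as $b_0$ does in $\B$, then a word can have several prefixes of depth $n$ (for instance $b_1b_0b_2$ has the two depth-$1$ prefixes $b_1$ and $b_1b_0$), so picking the \emph{shortest} prefix of depth $n$ merely selects one factorization and does not make the concatenation map $(\A^*)^{(n)}\times(\A^*)^{(d-n)}\to(\A^*)^{(d)}$ injective, which is what the reindexing of the sum requires. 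What actually rescues uniqueness under the stated hypotheses is that additivity of $\ell$ forces $\ell(0)=0$, and then the injectivity required in Definition \ref{def gen series of words}(i) forces $\Qnca{\A}^{(0)}=\QQ\1$, so there are no letters of depth $0$; hence partial depths strictly increase along a word and there is at most one prefix of each depth. Existence of a depth-$n$ prefix is likewise not a formal consequence of the hypotheses (it would fail if some letter had depth $\geq 2$), but it holds in every case the paper uses, since there the relevant words are built from depth-$1$ pieces: the letters of $\Y$ and $\Ybi$, and for $\QB^0$ the blocks $b_kb_0^m$ with $k\geq1$, $m\geq0$. With your uniqueness remark replaced by this observation, your argument coincides with the paper's.
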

\begin{proof} For $n=0,d$ the formula is obvious. For $0<n<d$, compute
\begin{align*}
&\Gen_n(Z_1,\ldots,Z_{\ell(n)})\cdot \Gen_{d-n}(Z_{\ell(n)+1},\ldots,Z_{\ell(d)})=\sum_{u\in (\A^*)^{(n)}}\sum_{v\in (\A^*)^{(d-n)}} \hspace{-0,2cm}uv \rho_{\A}(u)\rho_{\A}^{[\ell(n)]}(v)\\
&=\sum_{u\in (\A^*)^{(n)}}\sum_{v\in (\A^*)^{(d-n)}} uv \rho_{\A}(uv)=\sum_{w\in (\A^*)^{(d)}} w\rho_{\A}(w)=\Gen_d(Z_1,\ldots,Z_{\ell(d)}).
\end{align*}
\vspace{-0,7cm} \\ \end{proof} \noindent
If we extend the quasi-shuffle product $\quasish$ on $\Qnca{\A}$ to $\Qnca{\A}\llbracket Z_1,Z_2,\ldots\rrbracket $ by $\QQ\llbracket Z_1,Z_2,\ldots\rrbracket $-linearity, then by definition of the quasi-shuffle product $\quasish$ we have for all $0\leq n\leq d$ that
\[\Gen_n(Z_1,\ldots,Z_{\ell(n)})\quasish \Gen_{d-n}(Z_{\ell(n)+1},\ldots,Z_{\ell(d)})\in\Qnca{\A}\llbracket Z_1,\ldots,Z_{\ell(d)}\rrbracket .\]
For some very well-behaved quasi-shuffle products appearing for example in the theory of multiple zeta values and multiple q-zeta values, it is possible to describe this product on the generating series of words by a recursive explicit formula with respect to concatenation. This will explain the origin of a particular kind of symmetries occurring in Ecalle's theory of bimoulds (\cite{ec}). Some important examples related to multiple q-zeta values will be discussed in the following sections, for some other examples also related to multiple zeta values, we refer to \cite[Appendix A.4 + A.5]{bu}.
\vspace{0,2cm} \\
To translate the quasi-shuffle products into symmetries among bimoulds, we apply a $\QQ$-linear map  $\varphi:\Qnca{\A}\to R$ into some $\QQ$-algebra $R$ to the first component of such a generating series of words $\Gen$. In other words, we consider the image of the generic diagonal series $\mathcal{W}(\A)$ under $\varphi\otimes\rho_{\A}$.
\begin{defi} \label{def rho varphi gen series}
Let $\rho_{\A}:\Qnca{\A}\to \QQ[Z_1,Z_2,\ldots]$ be a $\QQ$-linear map as in Definition \ref{def gen series of words}, $R$ be a $\QQ$-algebra and $\varphi:\Qnca{\A}\to R$ be a $\QQ$-linear map. Then the \emph{(commutative) generating series with coefficients in $R$} associated to $(\varphi,\rho_{\A})$ are given by
\[\GenR_d(Z_1,\ldots,Z_{\ell(d)})=\sum_{w\in (\A^*)^{(d)}} \varphi(w)\rho_{\A}(w)\in R\llbracket Z_1,\ldots,Z_{\ell(d)}\rrbracket ,\qquad d\geq0.\]
\end{defi} \noindent
As before, we will often drop the depth index and simply write $\GenR(Z_1,\ldots,Z_{\ell(d)})$. \\
Such kind of sequences $\GenR=\Big(\GenR_d\Big)_{d\geq0}$ will occur as examples of Ecalle's (bi-)moulds (\cite{ec}). 
\begin{defi} Let $R$ be a $\QQ$-algebra. A sequence \\
\scalebox{0.9}{\parbox{.5\linewidth}{%
\[M=\left(M_d(X_1,\ldots,X_d)\right)_{d\geq0}=\left(M_0(\emptyset),M_1(X_1),M_2(X_1,X_2),\ldots\right)\in\prod_{d\geq0}R\llbracket X_1,\ldots,X_d\rrbracket \] }} \\
is called a \emph{mould} with coefficients in $R$. Similarly, a sequence \\
\scalebox{0.9}{\parbox{.5\linewidth}{%
\[M=\left(M_d\bi{X_1,\ldots,X_d}{Y_1,\ldots,Y_d}\right)_{d\geq0}=\left(M_0(\emptyset),M_1\bi{X_1}{Y_1},M_2\bi{X_1,X_2}{Y_1,Y_2},\ldots\right)\in\prod_{d\geq0}R\llbracket X_1,Y_1,\ldots,X_d,Y_d\rrbracket \] }} \\
is called a \emph{bimould} with coefficients in $R$.
\end{defi} \noindent
If $\rho_{\A}:\Qnca{\A}\to \QQ[X_1,X_2,\ldots]$ is a map as in Definition \ref{def gen series of words} with $\ell(d)=d$ for all $d\geq0$, then the corresponding family of generating series of words $\Gen=(\Gen_d)_{d\geq0}$ is a mould with coefficients in $\Qnca{\A}$. For any $\QQ$-linear map $\varphi:\Qnca{\A}\to R$, the family of generating series $\GenR=(\GenR_d)_{d\geq0}$ associated to $(\varphi,\rho_{\A})$ is a mould with coefficients in $R$. Similarly, if $\rho_{\A}:\Qnca{\A}\to \QQ[X_1,Y_1,X_2,Y_2,\ldots]$ is a map as in Definition \ref{def gen series of words} with $\ell(d)=2d$ for every $d\geq0$, then the corresponding family of generating series of words $\Gen=(\Gen_d)_{d\geq0}$ is a bimould with coefficients in the algebra $\Qnca{\A}$, and also the family of generating series $\GenR=(\GenR_d)_{d\geq0}$ associated to $(\varphi,\rho_{\A})$ for each $\QQ$-linear map $\varphi:\Qnca{\A}\to R$ is a bimould with coefficients in $R$.
\begin{defi} \label{varphi,rho-symmetric}
Let $R$ be a $\QQ$-algebra and $\ell(0)<\ell(1)<\ell(2)<\dots$ a sequence of non-negative integers. A sequence $M=(M_d)_{d\geq0}$ in $\prod_{d\geq0} R\llbracket Z_1,\ldots,Z_{\ell(d)}\rrbracket $ is called \emph{$(\varphi_{\quasish},\rho_{\A})$-symmetric} if there exists a $\QQ$-algebra morphism $\varphi_{\quasish}:(\Qnca{\A},\quasish)\to R$ and a $\QQ$-linear map $\rho_{\A}:\Qnca{\A}\to\QQ\llbracket Z_1,Z_2,\ldots\rrbracket $ satisfying the conditions in Definition \ref{def gen series of words}, such that for all $d\geq0$
\[M_d=\GenRqs_d.\]
\end{defi} \noindent
In the following sections, we will consider two particular quasi-shuffle algebras, which will give rise to the notion of symmetril and b-symmetril bimoulds following Definition \ref{varphi,rho-symmetric}.
\vspace{0,2cm} \\
Let $M=(M_d)_{d\geq0}\in \prod_{d\geq0} R\llbracket Z_1,\ldots,Z_d\rrbracket $ be such a $(\varphi_{\quasish},\rho_{\A})$-symmetric sequence. Then, one obtains immediately from the definition that for $0<n<d$
\begin{align} \label{formula phi,rho sym}
&M_n(Z_1,\ldots,Z_{\ell(n)})M_{d-n}(Z_{\ell(n)+1},\ldots,Z_{\ell(d)})\\
&\hspace{5cm}=\varphi_{\quasish}\Big(\Gen_n(Z_1,\ldots,Z_{\ell(n)})\quasish \Gen_{d-n}(Z_{\ell(n)+1},\ldots,Z_{\ell(d)})\Big). \nonumber
\end{align}
The right-hand side is an element in $\Qnca{\A}\llbracket Z_1,\ldots,Z_{\ell(d)}\rrbracket $, so the map $\varphi_{\quasish}:\Qnca{\A}\to R$ needs to be extended by $\QQ\llbracket Z_1,Z_2,\ldots\rrbracket $-linearity. As mentioned before, in some special cases the right-hand side can be described in terms of an explicit recursive formula.
\vspace{0,2cm} \\
Alternatively, we can first apply the evaluation map $\varphi:\Qnca{\A}\to R$ for some $\QQ$-algebra $R$ to the generic diagonal series $\mathcal{W}(\A)$. 
\begin{defi} \label{non-com gen series for varphi}
Let $R$ be a $\QQ$-algebra and $\varphi:\Qnca{\A}\to R$ be a $\QQ$-linear map. Define the \emph{(non-commutative) generating series with coefficients in $R$} associated to $\varphi$ by 
\[\Genphi=\sum_{w\in \A^*} \varphi(w)w\in \ncac{R}{\A}.\]
Here $\ncac{R}{\A}$ denotes the non-commutative algebra of power series over $R$ generated by $\A$.
\end{defi} \noindent
The generating series $\Genphi$ can also be decomposed into its homogeneous depth components $\Genphi_d$ for $d\geq0$ (similar to Definition \ref{def Gen}). 
\begin{prop} Let $R$ be a $\QQ$-algebra and $\varphi:\Qnca{\A}\to R$ be a $\QQ$-linear map. Assume that the algebra $(\Qnca{\A},\quasish)$ is graded with $\operatorname{deg}(a)\geq1$ for all $a\in\A$, and denote by $\Delta_{\quasish}$ the dual completed coproduct to $\quasish$. 
\vspace{0,1cm}\\
The map $\varphi$ is an algebra morphism for the quasi-shuffle product $\quasish$ if and only if $\Genphi$ is grouplike for the coproduct $\Delta_{\quasish}$.
\end{prop}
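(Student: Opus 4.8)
The plan is to unwind the definition of the dual coproduct and reduce the statement to a coefficient comparison. First, equip $\Qnca{\A}$ with the $\QQ$-bilinear pairing $\langle\,\cdot\,,\,\cdot\,\rangle$ for which the set of words $\A^*$ is orthonormal. Since $(\Qnca{\A},\quasish)$ is graded with every generator in degree $\geq 1$, each homogeneous component is finite-dimensional, the product $\quasish$ is graded, and its dual completed coproduct $\Delta_{\quasish}$ is the degree-preserving $\QQ$-linear map determined by
\[
\langle \Delta_{\quasish}(w),\, u\otimes v\rangle \;=\; \langle w,\, u\quasish v\rangle, \qquad u,v,w\in\A^*,
\]
with counit $\varepsilon(w)=\langle w,\1\rangle=\delta_{w,\1}$ (dual to the unit $\1$ of $\quasish$). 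Because $\Delta_{\quasish}$ preserves the grading it extends, after $R$-linear extension, to a map $\ncac{R}{\A}\to\ncac{R}{\A}\,\widehat{\otimes}\,\ncac{R}{\A}$ on the completed algebra, and likewise $\varepsilon$ extends to $\ncac{R}{\A}\to R$; the series $\Genphi=\sum_{w\in\A^*}\varphi(w)\,w$ lies in $\ncac{R}{\A}$, so all expressions below make sense degreewise.

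Next, I would compute both sides of the grouplikeness condition. From $\Delta_{\quasish}(w)=\sum_{u,v\in\A^*}\langle w,u\quasish v\rangle\,u\otimes v$ and $\QQ$-linearity of $\varphi$ one obtains
\[
\Delta_{\quasish}(\Genphi)\;=\;\sum_{u,v\in\A^*}\Big(\sum_{w\in\A^*}\langle w,u\quasish v\rangle\,\varphi(w)\Big)\,u\otimes v\;=\;\sum_{u,v\in\A^*}\varphi(u\quasish v)\,u\otimes v,
\]
where the last equality uses $u\quasish v=\sum_{w\in\A^*}\langle w,u\quasish v\rangle\,w$ in $\Qnca{\A}$ (a finite sum for fixed $u,v$). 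On the other hand, $\Genphi\,\widehat{\otimes}\,\Genphi=\sum_{u,v\in\A^*}\varphi(u)\varphi(v)\,u\otimes v$ and $\varepsilon(\Genphi)=\varphi(\1)$. Since the $u\otimes v$ form a topological basis of the completed tensor product, one may compare coefficients: $\Genphi$ is grouplike, i.e. $\Delta_{\quasish}(\Genphi)=\Genphi\,\widehat{\otimes}\,\Genphi$ and $\varepsilon(\Genphi)=1$, if and only if $\varphi(u\quasish v)=\varphi(u)\varphi(v)$ for all words $u,v\in\A^*$ and $\varphi(\1)=1$.

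Finally, I would conclude by bilinearity: the maps $(u,v)\mapsto\varphi(u\quasish v)$ and $(u,v)\mapsto\varphi(u)\varphi(v)$ are both $\QQ$-bilinear, so their agreement on all pairs of words is equivalent to their agreement on all of $\Qnca{\A}\times\Qnca{\A}$, i.e. to $\varphi$ being multiplicative for $\quasish$; together with $\varphi(\1)=1$ this says precisely that $\varphi$ is a (unital) $\QQ$-algebra morphism $(\Qnca{\A},\quasish)\to R$. There is no deep obstacle here; the only point requiring care — and the one place where the hypothesis $\operatorname{deg}(a)\geq1$ is genuinely needed — is the bookkeeping that makes $\Delta_{\quasish}$ and $\varepsilon$ well defined on the completion $\ncac{R}{\A}$ and legitimizes the coefficient comparison in $\ncac{R}{\A}\,\widehat{\otimes}\,\ncac{R}{\A}$, namely that each fixed degree involves only finitely many words.
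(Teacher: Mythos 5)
Your proposal is correct and follows essentially the same route as the paper: the paper's proof is exactly the duality computation $\Delta_{\quasish}(\Genphi)=\sum_{u,v\in\A^*}\varphi(u\quasish v)\,u\otimes v$ followed by coefficient comparison with $\Genphi\otimes\Genphi$, which you carry out with the pairing, completion, and counit details made explicit.
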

\begin{proof} By duality, we have \[\Delta_{\quasish}\Big(\Genphi\Big)=\Delta_{\quasish}\left(\sum_{w\in \A^*}\varphi(w)w\right)=\sum_{u,v\in\A^*} \varphi(u\quasish v) u\otimes v.\] 
So $\varphi$ is an algebra morphism for the quasi-shuffle product $\quasish$ if and only if
\begin{align*} 
\Delta_{\quasish}\Big(\Genphi\Big)=\sum_{u,v\in\A^*} \varphi(u)\varphi(v) u\otimes v=\Genphi\otimes\Genphi.
\end{align*}
\vspace{-0,7cm} \\ \end{proof} \noindent
In particular, applying the map $\varphi$ involves always a dualization process. Summarizing this section, we obtain the following diagram \\
\[\begin{tikzcd}
& \sum\limits_{w\in\A^*} w\otimes w \arrow[dr,"1\otimes\rho"] \arrow[dl,"\varphi\otimes1"'] & \\
\sum\limits_{w\in \A^*} \varphi(w)w
\arrow[dr,"1\otimes\rho"] && \left(\sum\limits_{w\in (\A^*)^{(d)}} w\rho(w)\right)_{d\geq0}
\arrow[dl,"\varphi\otimes1"] \\ 
&\left(\sum\limits_{w\in (\A^*)^{(d)}} \varphi(w)\rho(w)\right)_{d\geq0}
\end{tikzcd}.\]

\section{Generating series of words in $\QYbi$} \label{gen series Ybi section}

\noindent
Consider the alphabet $\Ybi=\{y_{k,m}\mid k\geq1,\ m\geq0\}$ and let the \emph{weight} and the \emph{depth} of a word in $\QYbi$ be given by 
\[\wt(y_{k_1,m_1}\dots y_{k_d,m_d})=k_1+\dots+k_d+m_1+\dots+m_d,\qquad \dep(y_{k_1,m_1}\dots y_{k_d,m_d})=d,\] 
for all $k_1,\ldots,k_d\geq1,\ m_1,\ldots,m_d\geq0$. Define the $\QQ$-linear map
\begin{align} \label{def rho Y^bi}
\rho_{\Ybi}:\QYbi&\to \QQ[X_1,Y_1,X_2,Y_2,\ldots], \\
y_{k_1,m_1}\dots y_{k_d,m_d}&\mapsto X_1^{k_1-1}\frac{Y_1^{m_1}}{m_1!}\dots X_d^{k_d-1}\frac{Y_d^{m_d}}{m_d!},
\end{align}
then $\rho_{\Ybi}$ satisfies the conditions in Definition \ref{def gen series of words} with $\ell(d)=2d$ for all $d\geq0$. The generating series of words in $\QYbi$ associated to $\rho_{\Ybi}$ are given by $\GenYbi_0=\1$ and for $d\geq 1$ by 
\begin{align} \label{gen series Ybi}
\GenYbi_d\bi{X_1,\ldots,X_d}{Y_1,\ldots,Y_d}=\sum_{\substack{k_1,\ldots,k_d\geq1 \\ m_1,\ldots,m_d\geq0}} y_{k_1,m_1}\dots y_{k_d,m_d}X_1^{k_1-1}\frac{Y_1^{m_1}}{m_1!}\dots X_d^{k_d-1}\frac{Y_d^{m_d}}{m_d!}.
\end{align}
The stuffle product $\ast$ on $\QYbi$ is one of the well-behaved quasi-shuffle products, where we can give an explicit recursive formula on the level of generating series of words.
\begin{thm} \label{stuffle Ybi gen series}
For the stuffle product $\ast$ on $\QYbi$ (Example \ref{ex quasi-shuffle products}.2.), one obtains for all $0<n<d$ that $\1\ast \GenYbi_n=\GenYbi_n\ast \1=\GenYbi_n$ and \\
\scalebox{0.95}{\parbox{.5\linewidth}{%
\begin{align*} 
&\GenYbi\bi{X_1,\ldots,X_n}{Y_1,\ldots,Y_n}\ast \GenYbi\bi{X_{n+1},\ldots,X_d}{Y_{n+1},\ldots,Y_d}\\
&=\GenYbi\bi{X_1}{Y_1}\cdot\left(\GenYbi\bi{X_2,\ldots,X_n}{Y_2,\ldots,Y_n}\ast \GenYbi\bi{X_{n+1},\ldots,X_d}{Y_{n+1},\ldots,Y_d} \right)\\
&+\GenYbi\bi{X_{n+1}}{Y_{n+1}}\cdot\left(\GenYbi\bi{X_1,\ldots,X_n}{Y_1,\ldots,Y_n}\ast \GenYbi\bi{X_{n+2},\ldots,X_d}{Y_{n+2},\ldots,Y_d}\right) \\ &+\frac{\GenYbi\bi{X_1}{Y_1+Y_{n+1}}-\GenYbi\bi{X_{n+1}}{Y_1+Y_{n+1}}}{X_1-X_{n+1}}\cdot \left(\GenYbi\bi{X_2,\ldots,X_n}{Y_2,\ldots,Y_n}\ast \GenYbi\bi{X_{n+2},\ldots,X_d}{Y_{n+2},\ldots,Y_d}\right).
\end{align*} }}
\end{thm}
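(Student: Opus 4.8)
The plan is to verify the identity by expanding both sides as power series in the variables $X_i, Y_i$ and comparing coefficients, which amounts to checking the stuffle product formula $y_{k_1,m_1}\ast\GenYbi_{d-1}(\cdots)$ against the right-hand side, word by word. More precisely, I would first reduce to the case $n=1$: since the stuffle product is defined recursively from the left, and by Proposition \ref{circle} the generating series factor as concatenations $\GenYbi_n = \GenYbi_1\cdot(\GenYbi_1)^{[2]}\cdots$, one can peel off the first letter block. So the core statement to prove is
\begin{align*}
\GenYbi\bi{X_1}{Y_1}\ast\GenYbi\bi{X_2,\ldots,X_d}{Y_2,\ldots,Y_d}
&=\GenYbi\bi{X_1}{Y_1}\cdot\GenYbi\bi{X_2,\ldots,X_d}{Y_2,\ldots,Y_d}\\
&\quad+\GenYbi\bi{X_2}{Y_2}\cdot\left(\GenYbi\bi{X_1}{Y_1}\ast\GenYbi\bi{X_3,\ldots,X_d}{Y_3,\ldots,Y_d}\right)\\
&\quad+\frac{\GenYbi\bi{X_1}{Y_1+Y_2}-\GenYbi\bi{X_2}{Y_1+Y_2}}{X_1-X_2}\cdot\left(\GenYbi\bi{X_1}{Y_1}\ast\GenYbi\bi{X_3,\ldots,X_d}{Y_3,\ldots,Y_d}\right),
\end{align*}
after which the general case follows by the recursive definition of $\ast$ from the left together with Proposition \ref{circle}; actually the cleanest route is a direct induction on $d-n$ using that same left-recursion.

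The key computation is the translation of the three terms in the recursive definition \eqref{stuffle product} of $\ast$ (in its bi-version) into generating-series language. The term $y_i(u\ast v)$ on the left of a letter simply produces a factor $\GenYbi\binom{X_1}{Y_1}$ by concatenation; this is the first summand. The symmetric term $y_j(y_iu\ast v)$ produces the factor $\GenYbi\binom{X_2}{Y_2}$ times the remaining stuffle; this is the second summand. The interesting term is $y_{i+j}(u\ast v)$: summing $y_{k_1+k_2,m_1+m_2}$ against $X_1^{k_1-1}\frac{Y_1^{m_1}}{m_1!}X_2^{k_2-1}\frac{Y_2^{m_2}}{m_2!}$ over $k_1,k_2\ge1$ and $m_1,m_2\ge0$, one sets $k=k_1+k_2\ge2$, $m=m_1+m_2$ and uses $\sum_{m_1+m_2=m}\frac{Y_1^{m_1}}{m_1!}\frac{Y_2^{m_2}}{m_2!}=\frac{(Y_1+Y_2)^m}{m!}$ together with $\sum_{k_1+k_2=k, k_1,k_2\ge1}X_1^{k_1-1}X_2^{k_2-1}=\frac{X_1^{k-1}-X_2^{k-1}}{X_1-X_2}$; this produces exactly the difference quotient $\frac{\GenYbi\binom{X_1}{Y_1+Y_2}-\GenYbi\binom{X_2}{Y_1+Y_2}}{X_1-X_2}$ (restricted to the $X$-degree-$\ge1$ part, but the difference quotient already has no constant-in-$X$ term, so this is automatic), giving the third summand. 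One must be a little careful that the "diagonal" letter $y_{i+j}$ has first index $\ge2$ and second index $\ge0$, which matches the fact that the difference quotient $\frac{X_1^{k-1}-X_2^{k-1}}{X_1-X_2}$ vanishes for $k=1$, so no spurious depth-lowering terms appear.

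The main obstacle, such as it is, is purely bookkeeping: one must organize the recursion so that the "tails" $u,v$ in $au\ast bv=a(u\ast bv)+b(au\ast v)+(a\diamond b)(u\ast v)$ are correctly matched with shifted generating series $\GenYbi$ in the remaining variables, i.e.\ keeping track of which variable indices go with which factor after the first letter is stripped. This is handled by property (ii) of Definition \ref{def gen series of words} (the shift $\rho^{[\ell(n)]}$) and Proposition \ref{circle}; once the $n=1$, $d=2$ base case is done explicitly, the induction is formal. I expect no genuine difficulty beyond ensuring the generating-function identities $\sum\frac{Y_1^{m_1}}{m_1!}\frac{Y_2^{m_2}}{m_2!}=\frac{(Y_1+Y_2)^m}{m!}$ and the $X$-difference-quotient identity are applied in the right place, and that the edge cases $n=0$ and $n=d$ (where the formula degenerates to $\1\ast\GenYbi_d=\GenYbi_d$) are noted separately as stated.
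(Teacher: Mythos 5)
Your second paragraph identifies exactly the computation the paper performs: apply the left recursion $au\ast bv=a(u\ast bv)+b(au\ast v)+(a\diamond b)(u\ast v)$ to the leading letters, regroup the sums (using property (ii) of Definition \ref{def gen series of words} for the variable shifts), and convert the diagonal term via $\sum_{m_1+m_2=m}\tfrac{Y_1^{m_1}}{m_1!}\tfrac{Y_2^{m_2}}{m_2!}=\tfrac{(Y_1+Y_2)^m}{m!}$ and $\sum_{k_1+k_2=k,\,k_1,k_2\geq1}X_1^{k_1-1}X_2^{k_2-1}=\tfrac{X_1^{k-1}-X_2^{k-1}}{X_1-X_2}$; this is precisely the ``simple power series manipulation'' in the paper's depth-$2$ base case, after which the paper handles general depth by the same recursion/induction. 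So in substance your route is the paper's.

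Two points need repair, however. First, your displayed ``core statement'' for $n=1$ is false as written: in the diagonal term $(a\diamond b)(u\ast v)$ one has $u=\1$ when the left factor has depth $1$, so the tail multiplying the difference quotient must be $\GenYbi\bi{X_3,\ldots,X_d}{Y_3,\ldots,Y_d}$ alone, not $\GenYbi\bi{X_1}{Y_1}\ast\GenYbi\bi{X_3,\ldots,X_d}{Y_3,\ldots,Y_d}$ (you have copied the tail of the second summand into the third). Already at $d=2$ your display gives the extra factor $\frac{\GenYbi\bi{X_1}{Y_1+Y_2}-\GenYbi\bi{X_2}{Y_1+Y_2}}{X_1-X_2}\cdot\GenYbi\bi{X_1}{Y_1}$, a depth-$2$ term where the correct formula has only the depth-$1$ difference quotient; since this display is your base case, the slip matters even though your subsequent prose matches the tails correctly. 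Second, the proposed ``reduction to $n=1$'' via Proposition \ref{circle} is not justified: the stuffle product does not distribute over the concatenation factorization of $\GenYbi_n$, and the general-$n$ identity is not a formal consequence of the $n=1$ case. No such reduction is needed --- the claimed formula for arbitrary $0<n<d$ is obtained by applying the left recursion directly to the leading letters of the depth-$n$ and depth-$(d-n)$ factors and regrouping, with induction (if you phrase it that way) running over the total depth $d$ rather than over $d-n$, since the recursion lowers the depth of either factor or of both. With the third summand corrected and the argument organized this way, your proof coincides with the paper's.
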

\begin{proof} First, consider the case $d=2$ and compute directly
\begin{align*}
&\GenYbi\bi{X_1}{Y_1}\ast\GenYbi\bi{X_2}{Y_2}=\sum_{\substack{k_1,k_2\geq1 \\ m_1,m_2\geq0}}(y_{k_1,m_1}\ast y_{k_2,m_2})X_1^{k_1-1}\frac{Y_1^{m_1}}{m_1!}X_2^{k_2-1}\frac{Y_2^{m_2}}{m_2!}\\
&=\sum_{\substack{k_1,k_2\geq1 \\ m_1,m_2\geq0}} \Big(y_{k_1,m_1}y_{k_2,m_2}+y_{k_2,m_2}y_{k_1,m_1}+y_{k_1+k_2,m_1+m_2}\Big) X_1^{k_1-1}\frac{Y_1^{m_1}}{m_1!}X_2^{k_2-1}\frac{Y_2^{m_2}}{m_2!} \\
&=\GenYbi\bi{X_1,X_2}{Y_1,Y_2}+\GenYbi\bi{X_2,X_1}{Y_2,Y_1}+\frac{\GenYbi\bi{X_1}{Y_1+Y_2}-\GenYbi\bi{X_2}{Y_1+Y_2}}{X_1-X_2},
\end{align*}
where the last step follows from simple power series manipulations. Since the definition of the stuffle product $\ast$ as well as the above formula for the generating series $\GenYbi$ is recursive, the arbitrary depth case follows similarly by induction.
\end{proof} \noindent
Following Definition \ref{varphi,rho-symmetric}, the quasi-shuffle algebra $(\QYbi,\ast)$ gives rise to a symmetry among bimoulds, which was studied extensively in \cite{ec} and \cite{sc}.
\begin{defi} \label{def symmetril}
A bimould $M=(M_d)_{d\geq0}\in \prod_{d\geq0}R\llbracket X_1,Y_1,\ldots,X_d,Y_d\rrbracket $ with coefficients in some $\QQ$-algebra $R$ is \emph{symmetril} if there is an algebra morphism $\varphi_\ast:(\QYbi,\ast)\to R$, such that $M$ is $(\rho_{\Ybi},\varphi_\ast)$-symmetric, i.e., we have $M_0=1$ and $M_d$ is for $d\geq 1$ given by\begin{flushleft}
	
\end{flushleft}
\[M_d\bi{X_1,\ldots,X_d}{Y_1,\ldots,Y_d}=\sum_{\substack{k_1,\ldots,k_d\geq1 \\ m_1,\ldots,m_d\geq0}} \varphi_\ast(y_{k_1,m_1}\dots y_{k_d,m_d})X_1^{k_1-1}\frac{Y_1^{m_1}}{m_1!}\dots X_d^{k_d-1}\frac{Y_d^{m_d}}{m_d!}.\]
We refer to $\varphi_\ast$ as the \emph{coefficient map} of $M$. 
\end{defi} 
\begin{ex} \label{ex symmetril}
Applying the recursive formula for $\ast$ on generating series of words given in Theorem \ref{stuffle Ybi gen series}, one obtains that Definition \ref{def symmetril} is equivalent to the symmetrility defined in \cite[p. 17ff]{sc}. For example, symmetrility in depths $2$ and $3$ means 
\begin{align*}
M\bi{X_1}{Y_1}\cdot M\bi{X_2}{Y_2} =&\ M\bi{X_1,X_2}{Y_1,Y_2}+M\bi{X_2,X_1}{Y_2,Y_1}+\frac{M\bi{X_1}{Y_1+Y_2}-M\bi{X_2}{Y_1+Y_2}}{X_1-X_2}, \\
M\bi{X_1}{Y_1}\cdot M\bi{X_2,X_3}{Y_2,Y_3} = & \ M\bi{X_1,X_2,X_3}{Y_1,Y_2,Y_3}+M\bi{X_2,X_1,X_3}{Y_2,Y_1,Y_3}+M\bi{X_2,X_3,X_1}{Y_2,Y_3,Y_1} \\
&+\frac{M\bi{X_1,X_3}{Y_1+Y_2,Y_3}-M\bi{X_2,X_3}{Y_1+Y_2,Y_3}}{X_1-X_2}+\frac{M\bi{X_2,X_1}{Y_2, Y_1+Y_3}-M\bi{X_2,X_3}{Y_2,Y_1+Y_3}}{X_1-X_3}.
\end{align*}
\end{ex} \noindent
The deconcatenation coproduct on $\QYbi$ (see \eqref{deconcatenation coproduct}) is compatible with the translation map $\rho_{\Ybi}$, a straight-forward computation gives the following.
\begin{prop} \label{dec gen series Ybi}
For each $d\geq0$, we have
\begin{align*}
\dec\GenYbi\bi{X_1,\ldots,X_d}{Y_1,\ldots,Y_d}=\sum_{i=0}^d \GenYbi\bi{X_1,\ldots,X_i}{Y_1,\ldots,Y_i}\otimes\GenYbi\bi{X_{i+1},\ldots,Y_d}{X_{i+1},\ldots,Y_d}.
\end{align*}
\end{prop}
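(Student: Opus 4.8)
The plan is to verify the identity directly by expanding both sides in terms of the monomial basis of words and matching coefficients. First I would recall the definition of the deconcatenation coproduct: for a word $w = y_{k_1,m_1}\dots y_{k_d,m_d}$ we have $\dec(w) = \sum_{i=0}^{d} (y_{k_1,m_1}\dots y_{k_i,m_i}) \otimes (y_{k_{i+1},m_{i+1}}\dots y_{k_d,m_d})$, where the $i=0$ term is $\1 \otimes w$ and the $i=d$ term is $w \otimes \1$. Extending $\dec$ to $\QYbi\llbracket X_1,Y_1,\ldots\rrbracket$ by $\QQ\llbracket X_1,Y_1,\ldots\rrbracket$-linearity and applying it termwise to the defining sum \eqref{gen series Ybi} for $\GenYbi_d$, each summand $y_{k_1,m_1}\dots y_{k_d,m_d}\, X_1^{k_1-1}\tfrac{Y_1^{m_1}}{m_1!}\cdots X_d^{k_d-1}\tfrac{Y_d^{m_d}}{m_d!}$ contributes, for the split at position $i$, the term
\[
\Bigl(y_{k_1,m_1}\dots y_{k_i,m_i}\, X_1^{k_1-1}\tfrac{Y_1^{m_1}}{m_1!}\cdots X_i^{k_i-1}\tfrac{Y_i^{m_i}}{m_i!}\Bigr)\otimes\Bigl(y_{k_{i+1},m_{i+1}}\dots y_{k_d,m_d}\, X_{i+1}^{k_{i+1}-1}\tfrac{Y_{i+1}^{m_{i+1}}}{m_{i+1}!}\cdots X_d^{k_d-1}\tfrac{Y_d^{m_d}}{m_d!}\Bigr).
\]

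The key step is then to interchange the (finite) sum over $i$ with the (formal) sums over $k_1,\ldots,k_d\geq1$ and $m_1,\ldots,m_d\geq0$, which is legitimate since for fixed $i$ the exponents in the two tensor factors range independently over the required sets. For each fixed $i$, the sum over $k_1,\ldots,k_i$ and $m_1,\ldots,m_i$ of the left tensor factor reassembles exactly into $\GenYbi_i\bi{X_1,\ldots,X_i}{Y_1,\ldots,Y_i}$ by definition \eqref{gen series Ybi}, and likewise the sum over $k_{i+1},\ldots,k_d$ and $m_{i+1},\ldots,m_d$ of the right tensor factor reassembles into $\GenYbi_{d-i}\bi{X_{i+1},\ldots,X_d}{Y_{i+1},\ldots,Y_d}$. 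Summing over $i$ from $0$ to $d$ gives the claimed formula; the $i=0$ and $i=d$ terms match because $\GenYbi_0 = \1$.

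There is essentially no obstacle here — the statement is a bookkeeping identity and the only point requiring a word of care is that the variable labels in the right-hand factor are $X_{i+1},\ldots,X_d$ and $Y_{i+1},\ldots,Y_d$ rather than a shift back to $X_1,\ldots,X_{d-i}$; this is consistent with the convention used throughout Section \ref{quasi-shuffle and gen series} and with the shift property (ii) in Definition \ref{def gen series of words}, since $\ell(d)=2d$ and $\rho_{\Ybi}^{[2i]}$ sends $X_j,Y_j$ to $X_{i+j},Y_{i+j}$. (I note that the statement of the proposition as displayed contains an evident typo, writing $\GenYbi\bi{X_{i+1},\ldots,Y_d}{X_{i+1},\ldots,Y_d}$ for the right factor; the intended expression is $\GenYbi\bi{X_{i+1},\ldots,X_d}{Y_{i+1},\ldots,Y_d}$, and the proof above establishes precisely that.) Hence the computation is "straight-forward" exactly as asserted.
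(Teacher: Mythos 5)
Your proof is correct and is exactly the straightforward computation the paper intends here (the paper omits the proof, but your argument mirrors its written proof of the analogous Proposition \ref{dec on GenB} for $\QB$): expand $\dec$ termwise on words, interchange the finite sum over the cut position with the formal sums over the indices, and reassemble the two tensor factors, using $\GenYbi_0=\1$ for the boundary terms. You are also right that the displayed right-hand factor is a typo and should read $\GenYbi\bi{X_{i+1},\ldots,X_d}{Y_{i+1},\ldots,Y_d}$.
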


\section{Generating series of words in $\QB^0$} \label{gen series B section}

\noindent
Consider the alphabet $\B=\{b_0,b_1,b_2,\ldots\}$ and define the \emph{weight} and the \emph{depth} of a word in $\QB$ by 
\[\wt(b_{s_1}\dots b_{s_l})=s_1+\dots+s_l+\#\{i\mid s_i=0\},\qquad \dep(b_{s_1}\dots b_{s_l})=l-\#\{i\mid s_i=0\},\]
for all $s_1,\ldots,s_l\geq0$. Let $\rho_\B$ be the $\QQ$-linear map given by
\begin{align} \label{def rho B}
\rho_\B:\QB&\to\QQ[Y_0,X_1,Y_1,X_2,Y_2,\ldots], \\
b_0^{m_0}b_{k_1}b_0^{m_1}\dots b_{k_d}b_0^{m_d}&\mapsto Y_0^{m_0}X_1^{k_1-1}Y_1^{m_1}\dots X_d^{k_d-1}Y_d^{m_d} \quad (k_1,\ldots,k_d\geq1,m_1,\ldots,m_d\geq0),
\end{align}
then $\rho_\B$ satisfies the conditions in Definition \ref{def gen series of words} with $\ell(d)=2d+1$ for all $d\geq0$. The generating series of words associated to $\rho_\B$ are given by for $d\geq0$
\[\GenB_d\bi{\hspace{0,5cm}X_1,\ldots,X_d}{Y_0;Y_1,\ldots,Y_d}=\sum_{\substack{k_1,\ldots,k_d\geq1 \\ m_0,\ldots,m_d\geq0}} b_0^{m_0}b_{k_1}b_0^{m_1}\dots b_{k_d}b_0^{m_d}Y_0^{m_0}X_1^{k_1-1}Y_1^{m_1}\dots X_d^{k_d-1}Y_d^{m_d}.\]
Let $\QB^0$ be the subspace of $\QB$ generated by all words not starting in $b_0$, i.e., we have
\[\QB^0=\QB\backslash b_0\QB.\] 
Define the corresponding diagonal series
\[\mathcal{W}(\B)^0=\sum_{w\in \B^*\cap \QB^0} w\otimes w.\] 
Let $\rho_{\B}^0:\QB^0\to \QQ[X_1,Y_1,X_2,Y_2,\ldots]$ be the restriction of $\rho_{\B}$ to $\QB^0$. Then the generating series of words in $\QB^0$ associated to $\rho_{\B}^0$ are given by 
$\GenBo_0=\1$ and
\[\GenBo_d\bi{X_1,\ldots,X_d}{Y_1,\ldots,Y_d}=\sum_{\substack{k_1,\ldots,k_d\geq1 \\ m_1,\ldots,m_d\geq0}} b_{k_1}b_0^{m_1}\dots b_{k_d}b_0^{m_d}X_1^{k_1-1}Y_1^{m_1}\dots X_d^{k_d-1}Y_d^{m_d},\quad d\geq1.\] 
On $\QB^0$ the balanced quasi-shuffle product $\bst$ is very well-behaved, thus we can also give a recursive definition on generating series of words for this product.
\begin{thm} \label{balanced quasi-shuffle gen series}
For the balanced quasi-shuffle product $\bst$ (Example \ref{ex quasi-shuffle products}.3.), one obtains for all $0<n<d$ that $\1\bst \GenBo_n=\GenBo_n\bst\ \1=\GenBo_n$ and \\
\scalebox{0.95}{\parbox{.5\linewidth}{%
\begin{align*}
&\GenBo\bi{X_1,\ldots,X_n}{Y_1,\ldots,Y_n}\bst \GenBo\bi{X_{n+1},\ldots,X_d}{Y_{n+1},\ldots,Y_d} \\
&=\left(\GenBo\bi{X_1,\ldots,X_{n-1}}{Y_1,\ldots,Y_{n-1}}\bst\GenBo\bi{X_{n+1},\ldots,X_d}{Y_{n+1},\ldots,Y_d}\right) \cdot \GenBo\bi{X_n}{Y_n+Y_d} \\
&+\left(\GenBo\bi{X_1,\ldots,X_n}{Y_1,\ldots,Y_n}\bst\GenBo\bi{X_{n+1},\ldots,X_{d-1}}{Y_{n+1},\ldots,Y_{d-1}}\right)\cdot\GenBo\bi{X_d}{Y_n+Y_d} \\
&+\left(\GenBo\bi{X_1,\ldots,X_{n-1}}{Y_1,\ldots,Y_{n-1}}\bst\GenBo\bi{X_{n+1},\ldots,X_{d-1}}{Y_{n+1},\ldots,Y_{d-1}}\right)\cdot\frac{\GenBo\bi{X_n}{Y_n+Y_d}-\GenBo\bi{X_d}{Y_n+Y_d}}{X_n-X_d}.
\end{align*} }}
\end{thm}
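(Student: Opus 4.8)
The plan is to mimic the structure of the proof of Theorem~\ref{stuffle Ybi gen series}: first verify the identity in the lowest nontrivial depth by a direct computation on coefficients, then bootstrap to arbitrary depth using that both the balanced quasi-shuffle product $\bst$ and the generating series $\GenBo$ admit recursive descriptions. A subtlety here, which was absent in the $\QYbi$ case, is that $\bst$ is most naturally recursive in terms of the \emph{leftmost} letters $b_i,b_j$, whereas the series $\GenBo$ carries its $b_0$-tails on the \emph{right} of each $b_{k_i}$; so the cleanest route is to use the right-recursive form of $\bst$ (permitted by the remark after Definition~\ref{def quasi-shuffle}), peeling off the last block $b_{k_n}b_0^{m_n}$ of the first factor and the last block $b_{k_d}b_0^{m_d}$ of the second factor.

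First I would treat $d=2$, $n=1$. Writing $\GenBo\bi{X_1}{Y_1}=\sum_{k_1\geq1,m_1\geq0} b_{k_1}b_0^{m_1}X_1^{k_1-1}Y_1^{m_1}$ and similarly for index $2$, I would expand $\GenBo\bi{X_1}{Y_1}\bst\GenBo\bi{X_2}{Y_2}=\sum (b_{k_1}b_0^{m_1}\bst b_{k_2}b_0^{m_2})X_1^{k_1-1}Y_1^{m_1}X_2^{k_2-1}Y_2^{m_2}$ and compute the product $b_{k_1}b_0^{m_1}\bst b_{k_2}b_0^{m_2}$ directly from the recursion in~\eqref{def q-stuffle intro}. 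Since $k_1,k_2\geq1$, the three terms in~\eqref{def q-stuffle intro} all contribute; the $b_0$-letters never trigger the "$\diamond$"-term (as $b_0\qdia b_i=0$), so they simply shuffle among themselves and produce, after collecting, the blocks $b_{k_1}b_0^{m_1}b_{k_2}b_0^{m_2}$, $b_{k_2}b_0^{m_2}b_{k_1}b_0^{m_1}$, and $b_{k_1+k_2}b_0^{m_1+m_2}$. Summing the geometric-type series in $X_1,X_2$ and $Y_1,Y_2$ — exactly as in the $\QYbi$ computation, but now the shared variable is $Y_1+Y_2$ sitting on a single $b_0^{m_1+m_2}$-tail — yields
\[
\GenBo\bi{X_1}{Y_1}\bst\GenBo\bi{X_2}{Y_2}=\GenBo\bi{X_2,X_1}{Y_2,Y_1}+\GenBo\bi{X_1,X_2}{Y_1,Y_2}+\GenBo\bi{X_1}{Y_1+Y_2}\cdot\frac{1-\ldots}{\ldots},
\]
which I would massage into the claimed $d=2$ form of the theorem (with the $Y_n+Y_d=Y_1+Y_2$ argument on the depth-$1$ series). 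Here I should be careful: the asserted depth-$2$ identity carries $Y_1,Y_2$ replaced by $Y_1+Y_2$ even in the length-$2$ words, so an intermediate change of the $Y$-variables (absorbing the inner $b_0$-exponents) is needed; this is the one genuinely fiddly but still elementary step.

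For the induction, suppose the formula holds for all pairs with total depth $<d$. Take the first factor $\GenBo\bi{X_1,\dots,X_n}{Y_1,\dots,Y_n}$ and the second $\GenBo\bi{X_{n+1},\dots,X_d}{Y_{n+1},\dots,Y_d}$. By Proposition~\ref{circle} (valid here with $\ell(d)=2d$ on $\QB^0$, since $\ell$ is additive on $\QB^0$), each factor decomposes as a concatenation of its depth-$(k-1)$ head with its last length-one block $\GenBo\bi{X_k}{Y_k}$. Expanding the $\bst$-product using the right-recursive version of Definition~\ref{def quasi-shuffle} on these last blocks then produces exactly three groups of terms — last block taken from the first factor, from the second factor, or from their $\qdia$-merge — and in each group the remaining product has strictly smaller total depth, so the induction hypothesis applies. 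The only real work is to check that reassembling these pieces, together with the $d=2$ base identity applied to the two trailing blocks $\GenBo\bi{X_n}{Y_n}$ and $\GenBo\bi{X_d}{Y_d}$, reproduces verbatim the three-term right-hand side of the theorem, with the $Y_n+Y_d$ shift appearing precisely on the trailing $X_n$-, $X_d$-, and difference-quotient factors. I expect this bookkeeping — in particular tracking how the shared $Y$-variable $Y_n+Y_d$ propagates and confirming that no cross terms between the two independent "tails" are missed — to be the main obstacle; but it is entirely parallel to, and no harder than, the corresponding step in the proof of Theorem~\ref{stuffle Ybi gen series}, so a short "the arbitrary depth case follows similarly by induction" finish is warranted once the $d=2$ case and the recursion are in place.
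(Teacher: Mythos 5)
Your overall plan (verify depth $2$ by a direct coefficient computation, then induct using the right-handed recursion) is a legitimate alternative to the paper's argument, which never computes word-level products but instead writes $\GenBo\bi{X_1}{Y_1}=\bigl(\sum_{k\geq1}b_kX_1^{k-1}\bigr)\cdot\frac{\1}{\1-b_0Y_1}$ and pushes the letter-wise recursion through whole series with cancellations. However, as written your depth-$2$ step is wrong at its core. The product $b_{k_1}b_0^{m_1}\bst b_{k_2}b_0^{m_2}$ does \emph{not} collect into the three words $b_{k_1}b_0^{m_1}b_{k_2}b_0^{m_2}$, $b_{k_2}b_0^{m_2}b_{k_1}b_0^{m_1}$, $b_{k_1+k_2}b_0^{m_1+m_2}$: since $b_0\qdia b_j=0$, the $b_0$-tails of the two words interleave freely around the letter $b_{k}$ of the other word, so one obtains all words $b_{k_1}b_0^{a}b_{k_2}b_0^{m_1+m_2-a}$ with $0\leq a\leq m_1$ and multiplicity $\binom{m_1+m_2-a}{m_2}$, the mirror family, and $\binom{m_1+m_2}{m_1}\,b_{k_1+k_2}b_0^{m_1+m_2}$. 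It is precisely the resummation of these binomial multiplicities, via $\sum_{j=0}^{b}\binom{b}{j}Y_1^{j}Y_2^{b-j}=(Y_1+Y_2)^{b}$, that produces the arguments $\bi{X_1,X_2}{Y_1,Y_1+Y_2}$, $\bi{X_2,X_1}{Y_2,Y_1+Y_2}$ and the difference quotient evaluated at $Y_1+Y_2$. Your displayed intermediate identity containing $\GenBo\bi{X_1,X_2}{Y_1,Y_2}$ and $\GenBo\bi{X_2,X_1}{Y_2,Y_1}$ is false as an identity of series, and no after-the-fact ``change of the $Y$-variables'' repairs it: the shift is not a substitution performed on the naive three-term expression but the outcome of the word-level combinatorics you skipped. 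In other words, the ``fiddly but elementary step'' you defer is the entire content of the base case.

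The induction step has the same problem one level up. The recursion defining $\bst$ peels off single letters, not blocks $b_kb_0^m$, and the last letters of both factors are generically $b_0$'s, for which the merge term vanishes and whose interleaving across the other factor is exactly the nontrivial part. Declaring that applying the recursion ``on these last blocks'' produces exactly three groups of terms, with the $Y_n+Y_d$ shift landing on the trailing factors, is asserting the block-level three-term structure that the theorem claims, i.e.\ it assumes what is to be proven; the appeal to Proposition \ref{circle} only handles concatenation, not the $\bst$-product of the tails. To make your route rigorous you would either have to carry out the binomial bookkeeping in arbitrary depth (tracking how the $b_0$'s of each factor distribute over all later slots), or, as the paper does, encode the $b_0$-tails once and for all through the factorization \eqref{9} and repeat those manipulations recursively. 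Until one of these is done, both the base case and the inductive step are gaps rather than routine verifications.
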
 
\begin{proof} Consider the case $d=2$. We obtain \\
\scalebox{0.95}{\parbox{.5\linewidth}{%
\begin{align} \label{9}
\GenBo\bi{X_1}{Y_1}&=\left(\sum_{k\geq1} b_kX_1^{k-1}\right)\cdot\frac{\1}{\1-b_0Y_1}\\
&=\sum_{k\geq1} b_kX_1^{k-1}+\GenBo\bi{X_1}{Y_1}\cdot b_0Y_1.
\end{align} }} \\
By applying the second identity in \eqref{9} and the definition of the balanced quasi-shuffle product (we use the recursive definition from the right), we obtain \\
\scalebox{0.95}{\parbox{.5\linewidth}{%
\begin{align*}
&\GenBo\bi{X_1}{Y_1}\bst\GenBo\bi{X_2}{Y_2}\\
&=\left(\sum_{k\geq1} b_kX_1^{k-1}\right) \bst\left(\sum_{k\geq1} b_kX_2^{k-1}\right)+\left(\sum_{k\geq1} b_kX_1^{k-1}\right)\bst\left(\GenBo\bi{X_2}{Y_2}\cdot b_0Y_2\right) \hspace{3cm}\\
&+ \left(\GenBo\bi{X_1}{Y_1}\cdot b_0Y_1\right)\bst\left(\sum_{k\geq1} b_kX_2^{k-1}\right)+\left(\GenBo\bi{X_1}{Y_1}\cdot b_0Y_1\right)\bst\left(\GenBo\bi{X_2}{Y_2}\cdot b_0Y_2\right) 
\end{align*} }} \\
\scalebox{0.95}{\parbox{.5\linewidth}{%
\begin{align*}
&=\left(\sum_{k\geq1} b_kX_1^{k-1}\right) \bst\left(\sum_{k\geq1} b_kX_2^{k-1}\right)+\left(\left(\sum_{k\geq1} b_kX_1^{k-1}\right)\bst\left(\GenBo\bi{X_2}{Y_2}Y_2\right)\right)\cdot b_0\\
&+\GenBo\bi{X_2}{Y_2}\cdot b_0Y_2\cdot \sum_{k\geq1} b_kX_1^{k-1}+\left(\left(\GenBo\bi{X_1}{Y_1}Y_1\right)\bst\left(\sum_{k\geq1} b_kX_2^{k-1}\right)\right)\cdot b_0 \\
&+\GenBo\bi{X_1}{Y_1}\cdot b_0Y_1\cdot \sum_{k\geq1} b_kX_2^{k-1}+\left(\left(\GenBo\bi{X_1}{Y_1}Y_1\right)\bst\left(\GenBo\bi{X_2}{Y_2}\cdot b_0Y_2\right)\right)\cdot b_0 \\
&+\left(\left(\GenBo\bi{X_1}{Y_1}\cdot b_0Y_1\right)\bst\left(\GenBo\bi{X_2}{Y_2}Y_2\right) \right)\cdot b_0.
\end{align*} }} \\
Applying again the second identity in \eqref{9} together with some cancellation, we deduce \\
\scalebox{0.935}{\parbox{.5\linewidth}{%
\begin{align*}
&\GenBo\bi{X_1}{Y_1}\bst\GenBo\bi{X_2}{Y_2}\\
&=\left(\sum_{k\geq1} b_kX_1^{k-1}\right)\bst\left(\sum_{k\geq1} b_kX_2^{k-1}\right)-2\left(\sum_{k\geq1} b_kX_1^{k-1}\right)\cdot\left(\sum_{k\geq1} b_kX_2^{k-1}\right)+\GenBo\bi{X_2}{Y_2}\cdot\sum_{k\geq1} b_kX_1^{k-1} \\
&+\GenBo\bi{X_1}{Y_1}\cdot\sum_{k\geq1} b_kX_2^{k-1}+\left(\GenBo\bi{X_1}{Y_1}\bst\GenBo\bi{X_2}{Y_2}\right)\cdot b_0(Y_1+Y_2).
\end{align*} }} \\
Using the first identity in \eqref{9} together with the formula for the concatenation product in terms of generating series of words (Proposition \ref{circle}), we obtain \\
\scalebox{0.95}{\parbox{.5\linewidth}{%
\begin{align} \label{formula34}
&\GenBo\bi{X_1}{Y_1}\bst\GenBo\bi{X_2}{Y_2}\\
&=\left(\left(\sum_{k\geq1} b_kX_1^{k-1}\right)\bst\left(\sum_{k\geq1} b_kX_2^{k-1}\right)-2\left(\sum_{k\geq1} b_kX_1^{k-1}\right)\cdot\left(\sum_{k\geq1} b_kX_2^{k-1}\right)\right)\cdot\frac{\1}{\1-b_0(Y_1+Y_2)} \nonumber \\
&\hspace{0,45cm}+\GenBo\bi{X_2,X_1}{Y_2,Y_1+Y_2}+\GenBo\bi{X_1,X_2}{Y_1,Y_1+Y_2}. \nonumber
\end{align} }} \\
Moreover, one verifies by applying the definition of $\bst$ and some power series manipulation \\
\scalebox{0.92}{\parbox{.5\linewidth}{%
\begin{align*}
&\left(\sum_{k\geq1} b_kX_1^{k-1}\right)\bst\left(\sum_{k\geq1} b_kX_2^{k-1}\right)=2\left(\sum_{k\geq1} b_kX_1^{k-1}\right)\cdot\left(\sum_{k\geq1} b_kX_2^{k-1}\right)+\frac{\sum_{k\geq1} b_kX_1^{k_1}-\sum_{k\geq1} b_k X_2^{k-1}}{X_1-X_2}.
\end{align*} }} \\
Applying this result to \eqref{formula34} together with \eqref{9} gives \\
\scalebox{0.95}{\parbox{.5\linewidth}{%
\begin{align*}
&\GenBo\bi{X_1}{Y_1}\bst\GenBo\bi{X_2}{Y_2}\\
&\hspace{1,8cm}=\GenBo\bi{X_2,X_1}{Y_2,Y_1+Y_2}+\GenBo\bi{X_1,X_2}{Y_1,Y_1+Y_2}+\frac{\GenBo\bi{X_1}{Y_1+Y_2}-\GenBo\bi{X_2}{Y_1+Y_2}}{X_1-X_2}.
\end{align*} }} \\
This equals exactly the claimed formula of $\bst$ for the generating series in depth $2$. Since the definition of the product $\bst$ and the above generating series formula are both recursive, we obtain the desired formula in arbitrary depth by applying induction and the same computations as before.
\end{proof} \noindent
The quasi-shuffle algebra $(\QB^0,\bst)$ gives rise to symmetry among bimoulds according to Definition \ref{varphi,rho-symmetric}.
\begin{defi} \label{def b-symmetril}
A bimould $M=(M_d)_{d\geq0}\in \prod_{d\geq0} R\llbracket X_1,Y_1,\ldots,X_d,Y_d\rrbracket $ with coefficients in some $\QQ$-algebra $R$ is \emph{b-symmetril} if there is an algebra morphism $\varphi_{\bst}:(\QB^0,\bst)\to R$, such that $M$ is $(\varphi_{\bst},\rho_\B^0)$-symmetric, i.e., we have $M_0=1$ and for all $d\geq1$
\[M_d\bi{X_1,\ldots,X_d}{Y_1,\ldots,Y_d}=\sum_{\substack{k_1,\ldots,k_d\geq1 \\ m_1,\ldots,m_d\geq0}} \varphi_{\bst}(b_{k_1}b_0^{m_1}\dots b_{k_d}b_0^{m_d})X_1^{k_1-1}Y_1^{m_1}\dots X_d^{k_d-1}Y_d^{m_d}.\]	
As before, we call the map $\varphi_{\bst}$ the \emph{coefficient map} of $M$. 
\end{defi}
\begin{ex} \label{ex b-symmetril} Applying the recursive formula for $\bst$ on the generating series of words given in Theorem \ref{balanced quasi-shuffle gen series}, we obtain that b-symmetrility in depth $2$ and $3$ means
\begin{align*}
M\bi{X_1}{Y_1}\cdot M\bi{X_2}{Y_2} =&\ M\bi{X_2,X_1}{Y_2,Y_1+Y_2}+M\bi{X_1,X_2}{Y_1,Y_1+Y_2}+\frac{M\bi{X_1}{Y_1+Y_2}-M\bi{X_2}{Y_1+Y_2}}{X_1-X_2}, \\
M\bi{X_1}{Y_1}\cdot M\bi{X_2,X_3}{Y_2,Y_3} =&\ M\bi{X_2,X_3,X_1}{Y_2,Y_3,Y_1+Y_3}+M\bi{X_2,X_1,X_3}{Y_2,Y_1+Y_2,Y_1+Y_3}\\
&+ M\bi{X_1,X_2,X_3}{Y_1,Y_1+Y_2,Y_1+Y_3}+\frac{M\bi{X_2,X_1}{Y_2, Y_1+Y_3}-M\bi{X_2,X_3}{Y_2,Y_1+Y_3}}{X_1-X_3}\\
&+\frac{M\bi{X_1,X_3}{Y_1+Y_2,Y_1+Y_3}-M\bi{X_2,X_3}{Y_1+Y_2,Y_1+Y_3}}{X_1-X_2}.
\end{align*}
\end{ex} 

\section{Generating series of words in $\QB$ and regularization} \label{gen series B regularization}

\noindent
The deconcatenation coproduct on $\QB$ (defined in \eqref{deconcatenation coproduct}) is compatible with the translation map $\rho_{\B}$, hence it can be described in terms of the generating series $\GenB$ of words in $\QB$. But the deconcatenation coproduct does not preserve the subspace $\QB^0$. Therefore, we will introduce a regularization map and a regularized coproduct and describe both in terms of the generating series $\GenBo$ of words in $\QB^0$.
\begin{prop} \label{dec on GenB}
For all $d\geq0$, we have
\[\dec\GenB\bi{\hspace{0,5cm}X_1,\ldots,X_d}{Y_0;Y_1,\ldots,Y_d}=\sum_{i=0}^d \GenB\bi{\hspace{0,5cm} X_1,\ldots,X_i}{Y_0;Y_1,\ldots,Y_i}\otimes\GenB\bi{\hspace{0,5cm} X_{i+1},\ldots,X_d}{Y_i;Y_{i+1},\ldots,Y_d}.\]
\end{prop}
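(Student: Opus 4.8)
\textbf{Proof proposal for Proposition \ref{dec on GenB}.}

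The plan is to compute the deconcatenation coproduct directly on the defining series $\GenB$ and rearrange the resulting double sum. Recall that for each word $w\in\B^*$ we have $\dec(w)=\sum_{w=uv}u\otimes v$, and that $\GenB_d$ is the sum over all words of the form $b_0^{m_0}b_{k_1}b_0^{m_1}\cdots b_{k_d}b_0^{m_d}$ (with $k_1,\dots,k_d\ge1$, $m_0,\dots,m_d\ge0$) weighted by the monomial $Y_0^{m_0}X_1^{k_1-1}Y_1^{m_1}\cdots X_d^{k_d-1}Y_d^{m_d}$. So the first step is to apply $\dec$ termwise and write $\dec\GenB_d$ as a sum over all such words $w$ together with all factorizations $w=uv$.

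The second, and main, step is to understand the combinatorics of splitting a word $w=b_0^{m_0}b_{k_1}b_0^{m_1}\cdots b_{k_d}b_0^{m_d}$ into a prefix $u$ and a suffix $v$. Every factorization point lies either strictly inside one of the blocks $b_0^{m_j}$ of zero-letters, or exactly between a zero-block and a letter $b_{k}$. I would index the split by the largest $i$ such that $b_{k_1},\dots,b_{k_i}$ all lie in $u$; then $u=b_0^{m_0}b_{k_1}b_0^{m_1}\cdots b_{k_i}b_0^{a}$ and $v=b_0^{b}b_{k_{i+1}}b_0^{m_{i+1}}\cdots b_{k_d}b_0^{m_d}$ where $a+b=m_i$. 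Thus for fixed $i$ the inner sum over $m_i\ge0$ with weight $Y_i^{m_i}$ factors, via $\sum_{a,b\ge0}Y_i^{a}Y_i^{b}=\bigl(\sum_a Y_i^a\bigr)\bigl(\sum_b Y_i^b\bigr)$ after re-indexing, into a contribution $Y_i^{a}$ attached to the $u$-part and a contribution $Y_i^{b}$ that becomes the leading zero-exponent of the $v$-part. Reading off the two pieces: the prefix contributes $\GenB_i\binom{X_1,\dots,X_i}{Y_0;\,Y_1,\dots,Y_i}$ (here the final $Y_i^a$ plays the role of the trailing zero-block $b_0^{m_i}$ with exponent-variable $Y_i$), and the suffix contributes $\GenB_{d-i}\binom{X_{i+1},\dots,X_d}{Y_i;\,Y_{i+1},\dots,Y_d}$ (the leading $Y_i^b$ now plays the role of the $Y_0$-slot of a depth-$(d-i)$ series, which is exactly why the $Y_i$ reappears in the second tensor factor). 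Summing over $i=0,\dots,d$ gives the claimed formula, where the boundary cases $i=0$ and $i=d$ correspond to $u=\1$ or $v=\1$ and match because $\GenB_0\binom{}{Y_0}=\frac{\1}{\1-b_0Y_0}$, i.e. the pure zero-block series.

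The one point that needs care — and which I expect to be the only real obstacle — is bookkeeping the shared variable $Y_i$: in $w$ it is a single zero-block $b_0^{m_i}$, but after deconcatenation it is split into a trailing block of $u$ and a leading block of $v$, and one must check that the monomial weights match up so that $Y_i$ ends up as the last argument of the first tensor factor \emph{and} as the $Y_0$-argument of the second tensor factor, with no spurious factorial or sign. This is purely a re-indexing of a geometric-type sum $\sum_{m\ge0}Y^m = \sum_{a\ge0}Y^a\cdot\sum_{b\ge0}Y^b$ cut along $a+b=m$, so once the indexing convention above is fixed the verification is a short formal computation; since $\rho_\B$ is as in Definition \ref{def gen series of words} (in particular multiplicative with respect to concatenation up to the variable shift), property (ii) of that definition together with Proposition \ref{circle} can be invoked to make the factorization of each term rigorous rather than checking it by hand. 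I would present the depth-$d$ case directly, as the structure is uniform in $d$ and no induction is needed.
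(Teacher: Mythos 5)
Your proposal is correct and follows essentially the same route as the paper: apply $\dec$ termwise to the defining sum, parametrize each factorization of $b_0^{m_0}b_{k_1}b_0^{m_1}\dots b_{k_d}b_0^{m_d}$ by the index $i$ of the last letter $b_{k_i}$ in the prefix together with a split $m_i=n_1+n_2$ of the shared zero-block, and re-index the sum so that the two free exponents $n_1,n_2$ produce the trailing $Y_i$-slot of the first tensor factor and the $Y_0$-slot (evaluated at $Y_i$) of the second. The paper carries out exactly this reordering of sums, so no further comparison is needed.
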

\begin{proof} By definition of the deconcatenation coproduct, we have \\
\scalebox{0.95}{\parbox{.5\linewidth}{%
\begin{align*}
&\dec\GenB\bi{\hspace{0,5cm}X_1,\ldots,X_d}{Y_0;Y_1,\ldots,Y_d}\\
&=\sum_{\substack{k_1,\dots,k_d \geq 1 \\ m_0,\dots,m_d\geq 0}} \dec(b_0^{m_0}b_{k_1}b_0^{m_1}\dots b_{k_d}b_0^{m_d})Y_0^{m_0}X_1^{k_1-1}Y_1^{m_1}\dots X_d^{k_d-1}Y_d^{m_d} \\
&=\sum_{i=0}^d \sum_{\substack{k_1,\dots,k_d \geq 1 \\ m_0,\dots,m_d\geq 0}} \sum_{\substack{n_1+n_2=m_i \\ n_1,n_2\geq0}}(b_0^{m_0}b_{k_1}b_0^{m_1}\dots b_{k_i}b_0^{n_1}\otimes b_0^{n_2}b_{k_{i+1}}b_0^{m_{i+1}}\dots b_{k_d}b_0^{m_d})Y_0^{m_0}X_1^{k_1-1}Y_1^{m_1}\dots X_d^{k_d-1}Y_d^{m_d} 
\end{align*} }}
Reordering these sums, we obtain that this expression is equal to \\
\scalebox{0.95}{\parbox{.5\linewidth}{%
\begin{align*}
& \sum_{i=0}^d \sum_{\substack{k_1,\dots,k_d \geq 1 \\ m_0,\dots,m_{i-1},n_1,n_2,m_{i+1},\ldots,m_d\geq 0}} (b_0^{m_0}b_{k_1}b_0^{m_1}\dots b_{k_i}b_0^{n_1}\otimes b_0^{n_2}b_{k_{i+1}}b_0^{m_{i+1}}\dots b_{k_d}b_0^{m_d})Y_0^{m_0}X_1^{k_1-1}Y_1^{m_1}\\
&\hspace{9,5cm}\dots X_i^{k_i-1}Y_i^{n_1+n_2}X_{i+1}^{k_{i+1}-1}Y_{i+1}^{m_{i+1}}\dots X_d^{k_d-1}Y_d^{m_d}  \\
&=\sum_{i=0}^d \GenB\bi{\hspace{0,5cm} X_1,\ldots,X_i}{Y_0;Y_1,\ldots,Y_i}\otimes\GenB\bi{\hspace{0,5cm} X_{i+1},\ldots,X_d}{Y_i;Y_{i+1},\ldots,Y_d}
\end{align*} }} \\
\end{proof} \noindent
Next, we introduce a regularization, which assigns to each word in $\QB$ an element in $\QB^0$, such that the balanced quasi-shuffle product $\bst$ is preserved. The regularization process is similar to the ones for multiple zeta values given in \cite[Proposition 1]{ikz}. We will rephrase this regularization map in terms of generating series of words.
\begin{prop}  \label{reg map q-shuffle}
Let $T$ be a formal variable and extend the product $\bst$ by $\QQ[T]$-linearity to $\QB^0[T]$. The map
\begin{align*} 
\regT:\ \QB^0[T]&\to \QB,\\ wT^n&\mapsto w\bst b_0^{\bst n}
\end{align*}
is an algebra isomorphism for the balanced quasi-shuffle product $\bst$.
\end{prop}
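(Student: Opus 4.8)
The plan is to exhibit an explicit two-sided inverse to $\regT$ and to check that both maps are $\bst$-algebra morphisms; injectivity and surjectivity then come for free. First I would observe that $\QB^0[T]$ is, as a $\QQ$-algebra under the $\QQ[T]$-linear extension of $\bst$, isomorphic to the polynomial ring $(\QB^0,\bst)[T]$ over the base ring $(\QB^0,\bst)$, since $T$ is a formal central variable. Dually, $\QB$ carries the balanced quasi-shuffle product $\bst$, and the key structural input is the analog of \cite[Proposition 1]{ikz}: every element of $\QB$ can be written uniquely as a finite $\bst$-polynomial in the letter $b_0$ with coefficients in $\QB^0$. Concretely, I would first establish the decomposition
\[
(\QB,\bst)\;=\;(\QB^0,\bst)\,[\,b_0\,]_{\bst},
\]
meaning that the $\bst$-powers $b_0^{\bst n}$, $n\geq 0$, are algebraically independent over $(\QB^0,\bst)$ and that $\QB^0\bst\QQ[b_0]_{\bst}=\QB$. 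This is where the combinatorics of $\bst$ enters: because $b_0\qdia b_j=0$ for all $j$ (including $j=0$), the letter $b_0$ never merges with anything under $\bst$, so $b_0^{\bst n}$ is a $\ZZ_{\geq0}$-linear combination of words obtained by shuffling $n$ copies of $b_0$ into the gaps, with leading term $n!\,b_0^n$ (the concatenation power). An induction on the number of $b_0$'s occurring at the front of a word then shows that $\regT$ sends a $\QQ[T]$-basis of $\QB^0[T]$ to a $\QQ$-basis of $\QB$; equivalently one writes down the inverse map $\regT^{-1}\colon w_0\,b_0^{\bst m}\mapsto w_0\,T^m$ for $w_0\in\QB^0$ extended by $\bst$-linearity and checks $\regT\circ\regT^{-1}=\mathrm{id}$ and $\regT^{-1}\circ\regT=\mathrm{id}$ by comparing leading terms under the filtration by "number of initial $b_0$'s".

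Next I would verify that $\regT$ is multiplicative. Given $uT^m,vT^n\in\QB^0[T]$ with $u,v\in\QB^0$, the product in $\QB^0[T]$ is $(u\bst v)\,T^{m+n}$, and
\[
\regT\bigl((u\bst v)T^{m+n}\bigr)=(u\bst v)\bst b_0^{\bst(m+n)}
=\bigl(u\bst b_0^{\bst m}\bigr)\bst\bigl(v\bst b_0^{\bst n}\bigr)
=\regT(uT^m)\bst\regT(vT^n),
\]
using only associativity and commutativity of $\bst$ together with $b_0^{\bst m}\bst b_0^{\bst n}=b_0^{\bst(m+n)}$, which is immediate from the definition of $\bst$-powers. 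Since $\regT(\1)=\1$, this shows $\regT$ is a $\QQ$-algebra morphism $(\QB^0[T],\bst)\to(\QB,\bst)$; combined with the bijectivity from the previous step, it is an isomorphism.

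The main obstacle is the first step: proving that $\QB$ is genuinely the free $\bst$-polynomial algebra over $\QB^0$ in the single element $b_0$. The inclusion $\QB^0\bst\QQ[b_0]_{\bst}\subseteq\QB$ is trivial, and spanning follows by an induction peeling off leading $b_0$'s, but algebraic independence of the $b_0^{\bst n}$ over $(\QB^0,\bst)$ requires care — one must rule out nontrivial $\bst$-relations. The clean way is a leading-term/triangularity argument: order words by the number $m_0$ of $b_0$'s at the very front, note that $b_0^{\bst n}$ has top component $n!\,b_0^{\,n}$ in this grading (strictly higher front-$b_0$ count than any lower $\bst$-power contributes), so a hypothetical relation $\sum_n c_n\bst b_0^{\bst n}=0$ with $c_n\in\QB^0$ forces, looking at the highest front-degree, the top $c_n$ to vanish, and one descends. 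This triangularity is exactly the $q$-analog of the corresponding statement for the shuffle regularization of multiple zeta values in \cite[Proposition 1]{ikz}, and once it is in place the rest of the argument is the formal bookkeeping above. A subsequent proposition (of the kind labeled ``Theorem \ref{reg on gen series}'' in the introduction) will presumably translate $\reg=\regT|_{T=0}\colon(\QB,\bst)\to(\QB^0,\bst)$ into the language of generating series, but for the present statement only the algebraic isomorphism is needed.
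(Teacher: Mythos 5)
Your proposal is correct and takes essentially the same route as the paper: the paper establishes surjectivity by precisely your ``peeling off leading $b_0$'s'' induction (via the identity $b_0^{m_0-1}b_{k_1}b_0^{m_1}\cdots b_{k_d}b_0^{m_d}\bst b_0=m_0\,b_0^{m_0}b_{k_1}\cdots b_{k_d}b_0^{m_d}+\text{terms with fewer leading }b_0\text{'s}$) and injectivity by your leading-term observation $w\bst b_0^{\bst n}=n!\,b_0^nw+\widetilde{w}$ with $\widetilde{w}$ having at most $n-1$ initial $b_0$'s, the multiplicativity being the same formal remark you make. The only difference is organizational: the paper runs the spanning induction on $m_0$ together with the weight rather than phrasing the result as freeness of $(\QB,\bst)$ as a $\bst$-polynomial algebra over $(\QB^0,\bst)$ in $b_0$.
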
 
\begin{proof} For the surjectivity of $\regT$, we show that any word $w\in \QB$ is a polynomial in $b_0$ with coefficients in $\QB^0$.
Let $w=b_0^{m_0}b_{k_1}b_0^{m_1}\dots b_{k_d}b_0^{m_d}$ for some integers $k_1,\ldots,k_d\geq1$ and $m_0,\ldots,m_d\geq0$. We prove by induction on $m_0$, that $w=u+v\bst b_0$ for some $u\in \QB^0$ and $v\in \QB$, where all words in $v$ have weight smaller than $\wt(w)$. Then induction on the weight proves the claim.
The case $m_0=0$ is trivial, simply choose $u=w,\ v=0$. Next, calculate
\begin{align*}
b_0^{m_0-1}b_{k_1}b_0^{m_1}\dots b_{k_d}b_0^{m_d} \bst b_0= 
&\ m_0\ b_0^{m_0}b_{k_1}b_0^{m_1}\dots b_{k_d}b_0^{m_d}\\
&+\sum_{i=1}^{d} (m_i+1)\ b_0^{m_0-1}b_{k_1}b_0^{m_1}\dots b_{k_i}b_0^{m_i+1}b_{k_{i+1}}b_0^{m_{i+1}}\dots b_{k_d}b_0^{m_d} .
\end{align*}
Applying the induction hypotheses to every word in the second line leads to
\[w=\frac{1}{m_0}\left(u+\big(v+b_0^{m_0-1}b_{k_1}b_0^{m_1}\dots b_{k_d}b_0^{m_d}\big)\bst b_0\right)\]
for some $u\in \QB^0$ and $v\in \QB$, where $v+b_0^{m_0-1}b_{k_1}b_0^{m_1}\dots b_{k_d}b_0^{m_d}$ consists of words of weight smaller than $\wt(w)$. \\
Let $P\in \QB^0[T]\backslash\{0\}$ and write $P=wT^n+R$, where $w\in \QB^0\backslash\{0\}$ and $R\in \QB^0[T]$ is a polynomial of degree smaller than $n$. We have
\[\regT(P)=w\bst b_0^{\bst n}+\regT(R)=n!b_0^nw+\widetilde{w},\] where $\widetilde{w}\in \QB$ consists of words with at most $(n-1)$-times the letter $b_0$ at the beginning. We deduce $\regT(P)\neq 0$, thus $\regT$ is injective.
\end{proof} \noindent
In particular, by first applying the inverse $\regT^{-1}:\QB\to \QB^0[T]$ and then evaluating in $T=0$ yields the desired \emph{regularization map}
\begin{align} \label{reg map}
\reg:(\QB,\bst)\to(\QB^0,\bst).
\end{align}
By construction, it is an algebra morphism for the balanced quasi-shuffle product $\bst$.
\begin{thm} \label{reg on gen series} For all $d\geq1$, we have
\begin{align*}
\reg\GenB\bi{\hspace{0,5cm}X_1,\ldots,X_d}{Y_0;Y_1,\ldots,Y_d}=\GenBo\bi{X_1,\ldots,X_d}{Y_1-Y_0,\ldots,Y_d-Y_0}.
\end{align*}
\end{thm}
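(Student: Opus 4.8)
The plan is to turn the statement into a first-order recursion for $\reg$ on the spanning words, repackage that recursion as a transport equation for the generating series, and solve it by the method of characteristics. First I record the action of $\reg$ on the two elementary pieces. Since $\regT$ sends $T\mapsto\1\bst b_0=b_0$ and restricts to the identity on $\QB^0\subset\QB^0[T]$, its inverse $\regT^{-1}:\QB\to\QB^0[T]$ sends $b_0\mapsto T$ and fixes $\QB^0$; composing with evaluation at $T=0$ yields $\reg(b_0)=0$ and $\reg|_{\QB^0}=\mathrm{id}$. Moreover $\reg$ preserves the depth of a word, since it only moves the letters $b_0$ past the letters $b_{k_i}$ without creating or destroying a $b_{k_i}$; hence on each fixed depth component it extends $\QQ\llbracket X_1,Y_1,\ldots\rrbracket$-linearly to power series, remaining an algebra morphism for $\bst$.

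Applying $\reg$ to the identity established inside the proof of Proposition \ref{reg map q-shuffle},
\[
b_0^{m_0-1}b_{k_1}b_0^{m_1}\dots b_{k_d}b_0^{m_d}\bst b_0= m_0\, b_0^{m_0}b_{k_1}b_0^{m_1}\dots b_{k_d}b_0^{m_d}+\sum_{i=1}^{d}(m_i+1)\, b_0^{m_0-1}b_{k_1}b_0^{m_1}\dots b_{k_i}b_0^{m_i+1}\dots b_{k_d}b_0^{m_d},
\]
the left-hand side becomes $\reg(b_0^{m_0-1}b_{k_1}\cdots)\bst\reg(b_0)=0$, so for every $m_0\geq1$ one obtains the recursion
\[
m_0\,\reg(b_0^{m_0}b_{k_1}b_0^{m_1}\dots b_{k_d}b_0^{m_d})=-\sum_{i=1}^{d}(m_i+1)\,\reg(b_0^{m_0-1}b_{k_1}b_0^{m_1}\dots b_{k_i}b_0^{m_i+1}\dots b_{k_d}b_0^{m_d}).
\]

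Now set $\Phi:=\reg\GenB\bi{X_1,\ldots,X_d}{Y_0;Y_1,\ldots,Y_d}\in\QB^0\llbracket Y_0,X_1,Y_1,\ldots,X_d,Y_d\rrbracket$. Multiplying the recursion by $Y_0^{m_0-1}X_1^{k_1-1}Y_1^{m_1}\cdots X_d^{k_d-1}Y_d^{m_d}$ and summing over $k_1,\ldots,k_d\geq1$, $m_0\geq1$ and $m_1,\ldots,m_d\geq0$, the left-hand side becomes $\partial_{Y_0}\Phi$; in the $i$-th summand on the right-hand side the index shift $m_i\mapsto m_i+1$ turns $\sum_{m_i}(m_i+1)Y_i^{m_i}\cdot(\cdots b_0^{m_i+1}\cdots)$ into $\partial_{Y_i}$ of the corresponding series, so the right-hand side becomes $-\sum_{i=1}^{d}\partial_{Y_i}\Phi$. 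Thus $\Phi$ satisfies $\partial_{Y_0}\Phi+\sum_{i=1}^{d}\partial_{Y_i}\Phi=0$. Putting $Y_0=0$ in the defining sum of $\GenB$ kills all terms with $m_0\geq1$, and $\reg$ fixes $\QB^0$, so $\Phi|_{Y_0=0}=\GenBo\bi{X_1,\ldots,X_d}{Y_1,\ldots,Y_d}$. A formal power series killed by $\partial_{Y_0}+\sum_i\partial_{Y_i}$ depends only on $Y_1-Y_0,\ldots,Y_d-Y_0$: indeed $\Psi(Y_0;Z_1,\ldots,Z_d):=\Phi(Y_0;Z_1+Y_0,\ldots,Z_d+Y_0)$ satisfies $\partial_{Y_0}\Psi=0$ by the chain rule, hence is independent of $Y_0$ and equals $\Psi|_{Y_0=0}=\Phi(0;Z_1,\ldots,Z_d)$. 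Therefore $\Phi(Y_0;Y_1,\ldots,Y_d)=\Phi(0;Y_1-Y_0,\ldots,Y_d-Y_0)=\GenBo\bi{X_1,\ldots,X_d}{Y_1-Y_0,\ldots,Y_d-Y_0}$, which is the claim.

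I do not expect a genuine obstacle; the points deserving a line of care are that $\reg$ preserves depth (so that the generating-series computations stay inside a single depth component), that the index shift in the third step really produces the operators $\partial_{Y_i}$, and the elementary transport fact about power series. Alternatively, one can solve the recursion in closed form, namely
\[
\reg(b_0^{m_0}b_{k_1}b_0^{m_1}\dots b_{k_d}b_0^{m_d})=(-1)^{m_0}\sum_{n_1+\cdots+n_d=m_0}\ \prod_{i=1}^{d}\binom{m_i+n_i}{n_i}\, b_{k_1}b_0^{m_1+n_1}\dots b_{k_d}b_0^{m_d+n_d},
\]
proved by induction on $m_0$ using the elementary identity $(m_i+1)\binom{m_i+n_i}{n_i-1}=n_i\binom{m_i+n_i}{n_i}$, and then resum with the binomial theorem in each $Y_i$; the transport-equation route is simply shorter.
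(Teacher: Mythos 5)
Your proof is correct, but it takes a genuinely different route from the paper. The paper does not derive the regularization from scratch at this point: it imports the closed formula $\reg(b_0^{m_0}b_{k_1}b_0^{m_1}\dots b_{k_d}b_0^{m_d})=(-1)^{m_0}\sum_{n_1+\dots+n_d=m_0}\binom{m_1+n_1}{n_1}\cdots\binom{m_d+n_d}{n_d}\,b_{k_1}b_0^{m_1+n_1}\dots b_{k_d}b_0^{m_d+n_d}$ from Ihara--Kaneko--Zagier (their equation (5.2), transplanted to this setting), expands both sides of the claimed identity, and matches coefficients against the binomial expansion of $(Y_i-Y_0)^{m_i}$. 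You instead stay entirely inside the paper's own Proposition \ref{reg map q-shuffle}: from $\reg(b_0)=0$, $\reg|_{\QB^0}=\mathrm{id}$ and the fact that $\reg$ is a morphism for $\bst$, the shuffle identity established there yields the recursion $m_0\,\reg(b_0^{m_0}\cdots)=-\sum_i(m_i+1)\,\reg(b_0^{m_0-1}\cdots b_{k_i}b_0^{m_i+1}\cdots)$, which you repackage as the transport equation $(\partial_{Y_0}+\sum_i\partial_{Y_i})\,\reg\GenB=0$ with initial condition $\GenBo$ at $Y_0=0$, solved by the standard change of variables; all the formal-power-series steps (the index shift giving $\partial_{Y_i}$, the constancy of a series with vanishing $Y_0$-derivative in characteristic zero) are sound, and depth preservation, while true, is not even needed. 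What your route buys is self-containedness — no appeal to the IKZ regularization formula is required — and a conceptual explanation of why only the differences $Y_i-Y_0$ appear; what the paper's route buys is brevity once the closed formula is granted. Your closing remark is also apt: the closed form you state is exactly the formula the paper cites, and your induction via $(m_i+1)\binom{m_i+n_i}{n_i-1}=n_i\binom{m_i+n_i}{n_i}$ checks out, so it would in addition furnish a proof of that cited identity in this setting.
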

\begin{proof} We obtain from \cite[equation (5.2)]{ikz} applied to our setup that for each word $w=b_0^{m_0}b_{k_1}b_0^{m_1}\dots b_{k_d}b_0^{m_d}$ we have
\begin{align*}
\reg(w)&=(-1)^{m_0} b_{k_1}(b_0^{m_0}\bst b_0^{m_1}b_{k_2}b_0^{m_2}\dots b_{k_d}b_0^{m_d}) \\
&=(-1)^{m_0} \sum_{\substack{n_1+\dots+n_d=m_0 \\ n_1,\ldots,n_d\geq0}} \binom{m_1+n_1}{n_1}\dots \binom{m_d+n_d}{n_d}b_{k_1}b_0^{m_1+n_1}b_{k_2}b_0^{m_2+n_2}\dots b_{k_d}b_0^{m_d+n_d}.
\end{align*}
Hence we deduce
\begin{align} \label{7}
&\reg\GenB\bi{\hspace{0,5cm}X_1,\ldots,X_d}{Y_0;Y_1,\ldots,Y_d}\\
&=\sum_{\substack{k_1,\dots,k_d \geq 1 \\ m_0,\dots,m_d\geq 0}} (-1)^{m_0} \sum_{\substack{n_1+\dots+n_d=m_0 \\ n_1,\ldots,n_d\geq0}} \binom{m_1+n_1}{n_1}\dots \binom{m_d+n_d}{n_d}b_{k_1}b_0^{m_1+n_1}b_{k_2}b_0^{m_2+n_2}\dots b_{k_d}b_0^{m_d+n_d} \nonumber \\
&\hspace{11cm} \cdot Y_0^{m_0}X_1^{k_1-1}Y_1^{m_1}\dots X_d^{k_d-1}Y_d^{m_d}. \nonumber \\
\end{align} 
On the other hand, we compute
\begin{align} \label{8}
&\GenBo\bi{X_1,\ldots,X_d}{Y_1-Y_0,\ldots,Y_d-Y_0} \\
&= \sum_{\substack{k_1,\dots,k_d \geq 1 \\ m_1,\dots,m_d\geq 0}} \sum_{n_1=0}^{m_1}\dots \sum_{n_d=0}^{m_d} (-1)^{n_1+\dots+n_d}\binom{m_1}{n_1}\dots \binom{m_d}{n_d}b_{k_1}b_0^{m_1}\dots b_{k_d}b_0^{m_d}Y_0^{n_1+\dots +n_d}X_1^{k_1-1}Y_1^{m_1-n_1} \nonumber \\
&\hspace{13,3cm} \dots X_d^{k_d-1}Y_d^{m_d-n_d}. \nonumber
\end{align}
Comparing the coefficients of the monomial $Y_0^{m_0}X_1^{k_1-1}Y_1^{m_1}\dots X_d^{k_d-1}Y_d^{m_d}$ in \eqref{7} and \eqref{8} yields the claimed equality.
\end{proof} 
\begin{defi} \label{def dec^0} On the space $\QB^0$ define the \emph{regularized coproduct}
\[\dec^0=(\reg\otimes\reg)\circ\dec.\]
\end{defi} \noindent
Combining the formulas given in Proposition \ref{dec on GenB} and Theorem \ref{reg on gen series}, we also obtain a description of the regularized coproduct $\dec^0$ on the generating series $\GenBo$ of words in $\QB^0$.
\begin{prop} \label{dec^0 gen series B}
For all $d\geq1$, we have
\begin{align*}
\dec^0\GenBo\bi{X_1,\ldots,X_d}{Y_1,\ldots,Y_d}&=\sum_{i=0}^d \GenBo\bi{X_1,\ldots,X_i}{Y_1,\ldots,Y_i}\otimes\GenBo\bi{X_{i+1},\ldots,X_d}{Y_{i+1}-Y_i,\ldots,Y_d-Y_i}.
\end{align*}
\end{prop}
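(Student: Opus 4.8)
The plan is to unwind the definition $\dec^0=(\reg\otimes\reg)\circ\dec$ and combine the two generating-series formulas already at hand: Proposition \ref{dec on GenB} for $\dec$ and Theorem \ref{reg on gen series} for $\reg$.

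First I would record the elementary identity
\[\GenBo\bi{X_1,\ldots,X_d}{Y_1,\ldots,Y_d}=\GenB\bi{X_1,\ldots,X_d}{0;Y_1,\ldots,Y_d},\]
obtained by setting $Y_0=0$ in the definition of $\GenB$, since this substitution annihilates exactly the terms with $m_0\geq1$, i.e.\ the words starting in $b_0$. Applying Proposition \ref{dec on GenB} and then specializing $Y_0=0$ therefore yields
\[\dec\GenBo\bi{X_1,\ldots,X_d}{Y_1,\ldots,Y_d}=\sum_{i=0}^d \GenBo\bi{X_1,\ldots,X_i}{Y_1,\ldots,Y_i}\otimes\GenB\bi{X_{i+1},\ldots,X_d}{Y_i;Y_{i+1},\ldots,Y_d},\]
the point being that the variable set to $0$ is $Y_0$, whereas the entry sitting in the $Y_0$-slot of the second tensor factor is $Y_i$, which is left untouched; in particular each first tensor factor already lies in $\QB^0$.

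Next I would apply $\reg\otimes\reg$. Since $\reg$ restricts to the identity on $\QB^0$ — immediate from Proposition \ref{reg map q-shuffle}, as $\regT(w)=w\bst b_0^{\bst 0}=w$ for $w\in\QB^0$, so that $\regT^{-1}(w)=w$ has $T$-degree $0$ and hence $\reg(w)=w$ — the first tensor factors are unchanged. For the second tensor factor I would invoke Theorem \ref{reg on gen series}, relabelling so that $X_{i+1},\ldots,X_d$ take the role of $X_1,\ldots,X_{d-i}$ and $Y_i,Y_{i+1},\ldots,Y_d$ take the role of $Y_0,Y_1,\ldots,Y_{d-i}$; this turns $\reg\GenB\bi{X_{i+1},\ldots,X_d}{Y_i;Y_{i+1},\ldots,Y_d}$ into $\GenBo\bi{X_{i+1},\ldots,X_d}{Y_{i+1}-Y_i,\ldots,Y_d-Y_i}$, which is precisely the term appearing on the right-hand side of the claimed formula.

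There is no genuine obstacle here: the proof is essentially a two-step substitution, and the only points requiring care are the boundary indices $i=0$ and $i=d$ (where the depth-$0$ series $\GenB_0$, $\GenBo_0$ and $\reg(\1)$ all equal $\1$, and where the $i=0$ term carries the convention $Y_0=0$ inherited from the specialization above) and making sure the relabelling in Theorem \ref{reg on gen series} is applied to the correct block of variables. If one wanted a slightly more streamlined route, one could first note that $\dec$ maps $\QB^0$ into $\QB^0\otimes\QB$, so that $\dec^0=(\mathrm{id}\otimes\reg)\circ\dec$ on $\QB^0$, and then only Theorem \ref{reg on gen series} — applied once, to the right-hand factor — is needed after Proposition \ref{dec on GenB}.
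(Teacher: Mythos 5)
Your proposal is correct and follows exactly the route the paper intends: the paper states Proposition \ref{dec^0 gen series B} as an immediate combination of Proposition \ref{dec on GenB} and Theorem \ref{reg on gen series}, which is precisely your two-step argument (specialize $Y_0=0$ to pass from $\GenB$ to $\GenBo$, note $\reg$ is the identity on $\QB^0$, and apply Theorem \ref{reg on gen series} with relabelled variables to the right-hand tensor factor). Your handling of the boundary cases $i=0,d$ and the observation that $\dec(\QB^0)\subset\QB^0\otimes\QB$ only make explicit what the paper leaves implicit.
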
 

\section{Comparison of the generating series of words in $\QYbi$ and $\QB^0$} \label{Comparison Ybi B}

\noindent
In this section, we will relate the product and coproduct formulas given for the generating series of words in $\QYbi$ and $\QB^0$ (Section \ref{gen series Ybi section}, \ref{gen series B section}, \ref{gen series B regularization}).
\begin{defi} For any bimould $M=(M_d)_{d\geq0}$ define the bimould $M^{\#_Y}=(M_d^{\#_Y})_{d\geq0}$ by
\[M_d^{\#_Y}\bi{X_1,\ldots,X_d}{Y_1,\ldots,Y_d}=M_d\bi{X_1,\ldots,X_d}{Y_1,Y_1+Y_2,\ldots,Y_1+\dots+Y_d},\qquad d\geq0.\]
\end{defi} \noindent
Evidently, the inverse operation is given by 
\[M_d^{\#_Y^{-1}}\bi{X_1,\ldots,X_d}{Y_1,\ldots,Y_d}=M_d\bi{X_1,\ldots,X_d}{Y_1,Y_2-Y_1,\ldots,Y_d-Y_{d-1}},\qquad d\geq0.\]
\begin{defi} \label{def varphi}
Let $\varphi_{\#}:\QYbi\to\QB^0$ be the $\QQ$-linear map implicitly defined by
\begin{align*}
(\varphi_{\#}\otimes\rho_{\Ybi})(\mathcal{W})_d\bi{X_1,\ldots,X_d}{Y_1,\ldots,Y_d}=\GenBo_d^{\#_Y}\bi{X_1,\ldots,X_d}{Y_1,\ldots,Y_d},
\end{align*}
i.e., the coefficient of $X_1^{k_1-1}\frac{Y_1^{m_1}}{m_1!}\dots X_d^{k_d-1}\frac{Y_d^{m_d}}{m_d!}$ in $\GenBo_d^{\#_Y}$ equals the image of the word $y_{k_1,m_1}\dots y_{k_d,m_d}$ under the map $\varphi_{\#}$.
\end{defi}
\begin{ex} We obtain for all $k_1,k_2\geq1,\ m_1,m_2\geq0$
\begin{align*}
\varphi_{\#}(y_{k_1,m_1})&=m_1! b_{k_1}b_0^{m_1}, \\
\varphi_{\#}(y_{k_1,m_1}y_{k_2,m_2})&= \sum_{n=m_2}^{m_1+m_2} \frac{m_1!n!}{(n-m_2)!}b_{k_1}b_0^{m_1+m_2-n}b_{k_2}b_0^n.
\end{align*}	
\end{ex}
\begin{thm} \label{varphi sharp alg hom}
The map $\varphi_{\#}:(\QYbi,\ast)\to (\QB^0,\bst)$ is an isomorphism of weight-graded algebras.
\end{thm}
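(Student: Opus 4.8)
The plan is to show that $\varphi_{\#}$ respects the two quasi-shuffle products by transporting everything to the level of generating series of words, where Theorems \ref{stuffle Ybi gen series} and \ref{balanced quasi-shuffle gen series} give matching recursive descriptions. First I would verify that $\varphi_{\#}$ is a well-defined $\QQ$-linear bijection preserving the weight grading: the defining identity in Definition \ref{def varphi} determines the image of each basis word $y_{k_1,m_1}\dots y_{k_d,m_d}$ as the coefficient of the corresponding monomial in $\GenBo_d^{\#_Y}$, and since $\GenBo_d$ has as coefficients precisely the words $b_{k_1}b_0^{m_1}\dots b_{k_d}b_0^{m_d}$ (with unit leading coefficients) the operation $\#_Y$ is an invertible triangular change of the $Y$-variables; hence $\varphi_{\#}$ sends the depth-$d$, weight-$w$ part of $\QYbi$ isomorphically onto the depth-$d$, weight-$w$ part of $\QB^0$ (here $\wt(y_{k,m})=k+m$ corresponds to $\wt(b_kb_0^m)=k+m$, and $\#_Y$ does not change total $Y$-degree), so $\varphi_{\#}$ is a weight-graded linear isomorphism.

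Next I would prove multiplicativity. Extending $\varphi_{\#}$ to power series $\QQ\llbracket X_i,Y_i\rrbracket$-linearly, the claim $\varphi_{\#}(u\ast v)=\varphi_{\#}(u)\bst\varphi_{\#}(v)$ is equivalent, after packaging words into generating series, to the single identity
\[
\varphi_{\#}\Bigl(\GenYbi_m\ast\GenYbi_{n}\Bigr)=\Bigl(\varphi_{\#}\otimes\rho_{\Ybi}\Bigr)(\GenYbi_m)\ \bst\ \Bigl(\varphi_{\#}\otimes\rho_{\Ybi}\Bigr)(\GenYbi_n)
\]
in suitable variables, i.e. $(\GenBo_m)^{\#_Y}\bst(\GenBo_n)^{\#_Y}$ should equal $\varphi_{\#}$ applied to the left side of the stuffle recursion of Theorem \ref{stuffle Ybi gen series}. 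The strategy is induction on $m+n$: apply the stuffle recursion from Theorem \ref{stuffle Ybi gen series} to $\GenYbi_m\bi{X_1,\dots}{Y_1,\dots}\ast\GenYbi_n\bi{\dots}{\dots}$, apply $\varphi_{\#}$ termwise, use that $\varphi_{\#}$ is $\rho_{\Ybi}$-compatible so that $(\varphi_{\#}\otimes\rho_{\Ybi})(\GenYbi_\bullet)=(\GenBo_\bullet)^{\#_Y}$, invoke the induction hypothesis on the three lower-depth stuffle products appearing, and finally check that the resulting expression in the $(\GenBo)^{\#_Y}$'s matches the balanced quasi-shuffle recursion of Theorem \ref{balanced quasi-shuffle gen series}. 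Concretely one must reconcile the shift $Y_i\mapsto Y_1+\dots+Y_i$ built into $\#_Y$ with the three ways the balanced recursion glues depth-$1$ factors $\GenBo\bi{X}{Y_n+Y_d}$ onto the right: under $\#_Y$ the partial sums of $Y$'s telescope, the $X_1,X_{n+1}$ that get identified in the stuffle picture become $X_n,X_d$ in the balanced picture (because $\#_Y$ together with the recursion-from-the-right in Theorem \ref{balanced quasi-shuffle gen series} reverses the roles of first and last letters), and the "diagonal'' terms $\frac{\GenYbi\bi{X_1}{Y_1+Y_{n+1}}-\GenYbi\bi{X_{n+1}}{Y_1+Y_{n+1}}}{X_1-X_{n+1}}$ map exactly to $\frac{\GenBo\bi{X_n}{Y_n+Y_d}-\GenBo\bi{X_d}{Y_n+Y_d}}{X_n-X_d}$. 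I would first do the base case $m=n=1$ by a direct computation, comparing the depth-$2$ symmetrility formula of Example \ref{ex symmetril} (after applying $\#_Y$) with the depth-$2$ b-symmetrility formula of Example \ref{ex b-symmetril}; these literally coincide, which both motivates and anchors the induction.

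The main obstacle I anticipate is the bookkeeping in the inductive step: one must carefully track how the variable substitution $\#_Y$ interacts with the asymmetry between Theorem \ref{stuffle Ybi gen series} (recursion peeling off the \emph{first} letters $y_{k_1,m_1}$ and $y_{k_{n+1},m_{n+1}}$) and Theorem \ref{balanced quasi-shuffle gen series} (recursion peeling off the \emph{last} letters $b_{k_n}b_0^{m_n}$ and $b_{k_d}b_0^{m_d}$). Because $\varphi_{\#}$ is defined via coefficients of $\GenBo^{\#_Y}$ and $\GenBo$ has $b_0$'s trailing each $b_{k_i}$ while the $\#_Y$-shift accumulates $Y$'s from the left, the natural direction of recursion flips, so the "first-letter'' stuffle decomposition of $\GenYbi$ translates into the "last-letter'' balanced decomposition of $\GenBo^{\#_Y}$. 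Making this precise — identifying which variables are being set equal and verifying the three summands line up — is where essentially all the work lies; once that is checked in a single inductive step the theorem follows, and injectivity/surjectivity plus the weight-grading were already established in the first paragraph. It may be cleanest to phrase the whole computation as an identity of bimoulds and remark that $\varphi_{\#}$ is, by construction, exactly the map sending the universal symmetril bimould $\GenYbi$ of words to the universal b-symmetril bimould $\GenBo^{\#_Y}$ of words, so that any symmetril bimould pushes forward to a b-symmetril one and vice versa.
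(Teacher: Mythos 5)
Your first paragraph (linear bijectivity, triangularity of $\#_Y$, weight-grading) and your depth-$(1,1)$ base case are fine and agree with the paper, which disposes of these points with the same observation. The genuine problem is in the inductive step: the term-by-term correspondence you propose between the three summands of Theorem \ref{stuffle Ybi gen series} (which peels the \emph{first} letters, at $X_1,X_{n+1}$) and the three summands of Theorem \ref{balanced quasi-shuffle gen series} (which peels the \emph{last} letters, at $X_n,X_d$) is false in depth $>2$, and the justification offered — that $\#_Y$ together with the right-handed recursion ``reverses the roles of first and last letters'' — is not true: $\#_Y$ only accumulates $Y$-variables and reverses nothing (a reversal enters only later, in the $\swap$/$\tau$ comparison of Theorem \ref{Hopf iso Ybi B}). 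Concretely, take $n=1$, $d=3$. Applying $\varphi_{\#}$ to the stuffle diagonal term $\frac{\GenYbi\bi{X_1}{Y_1+Y_2}-\GenYbi\bi{X_2}{Y_1+Y_2}}{X_1-X_2}\cdot\GenYbi\bi{X_3}{Y_3}$ gives $\frac{\GenBo\bi{X_1,X_3}{Y_1+Y_2,\,Y_1+Y_2+Y_3}-\GenBo\bi{X_2,X_3}{Y_1+Y_2,\,Y_1+Y_2+Y_3}}{X_1-X_2}$, which is a piece of the \emph{second} balanced summand $\bigl(\GenBo\bi{X_1}{Y_1}\bst\GenBo\bi{X_2}{Y_2}\bigr)\cdot\GenBo\bi{X_3}{Y_1+Y_2+Y_3}$, not of the balanced diagonal term; the balanced diagonal term $\GenBo\bi{X_2}{Y_2}\cdot\frac{\GenBo\bi{X_1}{Y_1+Y_2+Y_3}-\GenBo\bi{X_3}{Y_1+Y_2+Y_3}}{X_1-X_3}$ comes instead from the contracted part of the stuffle summand $\GenYbi\bi{X_2}{Y_2}\cdot\bigl(\GenYbi\bi{X_1}{Y_1}\ast\GenYbi\bi{X_3}{Y_3}\bigr)$. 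So if you run the induction literally as outlined — left-peel on the $\QYbi$ side, apply $\varphi_{\#}$ and the induction hypothesis, then match the three resulting terms against the three terms of Theorem \ref{balanced quasi-shuffle gen series} — the identification fails and the step does not close.

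The plan is repairable, but it needs an ingredient you did not supply: a precise statement of how $\varphi_{\#}$ interacts with concatenation. Since a quasi-shuffle product can be defined recursively from either side, first rewrite the stuffle formula of Theorem \ref{stuffle Ybi gen series} as a right-handed recursion (peeling the last letters at $X_n$ and $X_d$), and prove the compatibility $(\varphi_{\#}\otimes\rho_{\Ybi})(\mathcal{W})_d=(\varphi_{\#}\otimes\rho_{\Ybi})(\mathcal{W})_{d-1}\cdot\GenBo_1\bi{X_d}{Y_1+\cdots+Y_d}$, which follows from Proposition \ref{circle}: right-concatenating one letter picks up the \emph{total} accumulated lower variable, and since on every term of a stuffle product the lower entries sum to the full sum of the $Y$'s, this total is exactly the entry $Y_n+Y_d$ of Theorem \ref{balanced quasi-shuffle gen series} after the $\#_Y$-substitution; with that, the three summands really do line up and the induction closes. (The opposite repair, a left-handed version of Theorem \ref{balanced quasi-shuffle gen series}, is not available on $\QB^0$, because left-peeling produces words beginning with $b_0$.) The paper itself avoids matching the recursions step by step: it expands the product of the two $\GenBo^{\#_Y}$'s to top depth as a sum over $\sigma\in\shuffle(d,d')$ with accumulated lower entries $\underline{Y_{\sigma^{-1}(k)}}+\underline{Y_{\sigma_\mu^{-1}(k)}}$, checks via the predecessor argument that consecutive differences of these entries recover $Y_{\sigma^{-1}(k)}$, so that $\#_Y^{-1}$ turns them into the top-depth stuffle terms, and treats the contracted lower-depth terms by the same observation. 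Either route requires exactly the bookkeeping your outline defers, and your anticipated mechanism for it, as stated, would lead you astray.
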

\begin{proof} Evidently, $\varphi_{\#}$ is an isomorphism of graded vector $\QQ$-vector spaces. We will now show the compatibility of the products $\ast$ and $\bst$ on the level of generating series, this means we will prove that $\GenB^{\#_Y}$ satisfies the stuffle product formula given in Theorem \ref{stuffle Ybi gen series}. For all $d,d'\geq 1$ denote by $\shuffle(d,d')$ the set of all permutations $\sigma\in S_{d+d'}$ satisfying $\sigma(1)<\dots<\sigma(d)$, $\sigma(d+1)<\dots<\sigma(d+d')$. Moreover, write $\underline{Y_i}=Y_1+\dots+Y_i$ for $1\leq i\leq d$ and $\underline{Y_{d+j}}=Y_{d+1}+\dots+Y_{d+j}$ for $1\leq j \leq d'$. Then by Theorem \ref{balanced quasi-shuffle gen series}, we have modulo terms of lower depths
\begin{align} \label{1}
&\GenB^{\#_Y}\bi{X_1,\ldots,X_d}{Y_1,\ldots,Y_d} \GenB^{\#_Y}\bi{X_{d+1},\ldots,X_{d+d'}}{Y_{d+1},\ldots,Y_{d+d'}}  \\
&\hspace{4cm}\equiv\sum_{\sigma\in \shuffle(d,d')} \GenB\bi{X_{\sigma^{-1}(1)},\ldots,X_{\sigma^{-1}(d+d')}}{\underline{Y_{\sigma^{-1}(1)}}+\underline{Y_{\sigma_\mu^{-1}(1)}},\ldots,\underline{Y_{\sigma^{-1}(d+d')}}+\underline{Y_{\sigma_\mu^{-1}(d+d‘)}}}. \nonumber
\end{align}
Here we set 
\[\sigma_\mu^{-1}(k)=\begin{cases} \sigma^{-1}\big(\max\{n \mid \sigma^{-1}(n)>d,n<k\}\big), & 1\leq \sigma^{-1}(k)\leq d, \\ \sigma^{-1}\big(\max\{n \mid \sigma^{-1}(n)\leq d,\ n<k\}\big), &d+1\leq \sigma^{-1}(k)\leq d+d',\end{cases}\] 
and $\underline{Y_{\sigma_\mu^{-1}(k)}}=0$ if such a number $n$ does not exists. 
Observe that for every term in the right-hand side of \eqref{1} and for every $k=1,\ldots,d+d'$ the predecessor of the entry $\underline{Y_{\sigma^{-1}(k)}}+\underline{Y_{\sigma_\mu^{-1}(k)}}$ in the bi-index is given by $\underline{Y_{\sigma^{-1}(k)-1}}+\underline{Y_{\sigma_\mu^{-1}(k)}}$ (where $\underline{Y_{\sigma^{-1}(k)-1}}:=0$ if $k\in\{1,d+1\}$) and, we have $\underline{Y_n}-\underline{Y_{n-1}}=Y_n$. Thus, we deduce 
\begin{align} \label{2}
\GenB^{\#_Y^{-1}}\bi{X_{\sigma^{-1}(1)},\ldots,X_{\sigma^{-1}(d+d')}}{\underline{Y_{\sigma^{-1}(1)}}+\underline{Y_{\sigma_\mu^{-1}(1)}},\ldots,\underline{Y_{\sigma^{-1}(d+d')}}+\underline{Y_{\sigma_\mu^{-1}(d+d‘)}}}=\GenB\bi{X_{\sigma^{-1}(1)},\ldots,X_{\sigma^{-1}(d+d')}}{Y_{\sigma^{-1}(1)},\ldots,Y_{\sigma^{-1}(d+d')}}.
\end{align}
Combining the equations \eqref{1} and \eqref{2}, we get modulo terms of lower depths
\begin{align*} 
&\GenB^{\#_Y}\bi{X_1,\ldots,X_d}{Y_1,\ldots,Y_d} \GenB^{\#_Y}\bi{X_{d+1},\ldots,X_{d+d'}}{Y_{d+1},\ldots,Y_{d+d'}}\\
&\hspace{7cm}=\sum_{\sigma\in \shuffle(d,d')} \GenB^{\#_Y}\bi{X_{\sigma^{-1}(1)},\ldots,X_{\sigma^{-1}(d+d')}}{Y_{\sigma^{-1}(1)},\ldots,Y_{\sigma^{-1}(d+d')}} 
\end{align*}
Moreover, for every entry $\bi{X_j}{\underline{Y_j}+\underline{Y_{j'}}}$ in the lower depth terms coming from the third line in the recursive expression of $\bst$ given in Theorem \ref{balanced quasi-shuffle gen series}, the predecessor in the lower row is given by $\underline{Y_{j-1}}+\underline{Y_{j'-1}}$ (with $\underline{Y_{j-1}}=0$ for $j=1,d+1$). Therefore, the terms of lower depth equals the ones in the stuffle product formula on generating series of words given in Theorem \ref{stuffle Ybi gen series}. Altogether, this means the product 
\[\GenB^{\#_Y}\bi{X_1,\ldots,X_d}{Y_1,\ldots,Y_d} \GenB^{\#_Y}\bi{X_{d+1},\ldots,X_{d+d'}}{Y_{d+1},\ldots,Y_{d+d'}}\]
has an expression via the same recursive formula as for the stuffle product obtained in Theorem \ref{stuffle Ybi gen series}. Thus, $\varphi_{\#}$ is an algebra morphism.
\end{proof} 
\begin{cor} \label{symmetril and b-symmetril} A bimould $M=(M_d)_{d\geq0}$ is b-symmetril if and only if the bimould $M^{\#_Y}$ is symmetril.
\end{cor}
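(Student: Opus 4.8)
The plan is to deduce the corollary directly from Theorem~\ref{varphi sharp alg hom} together with Definition~\ref{def varphi}: the operation $M\mapsto M^{\#_Y}$ on bimoulds is precisely the ``generating series of words'' shadow of the algebra isomorphism $\varphi_{\#}$, so b-symmetrility and symmetrility are transported into one another along $\varphi_{\#}$. First I would fix notation: for a $\QQ$-linear map $\psi\colon\QB^0\to R$ write $\psi$ also for its coefficientwise extension $\QB^0\llbracket X_1,Y_1,\ldots\rrbracket\to R\llbracket X_1,Y_1,\ldots\rrbracket$, and likewise for $\QQ$-linear maps out of $\QYbi$. Unwinding Definition~\ref{def b-symmetril}, a bimould $M$ is $(\psi,\rho_\B^0)$-symmetric if and only if $M_d=\psi\bigl(\GenBo_d\bigr)$ for all $d\geq0$, and unwinding Definition~\ref{def symmetril}, the bimould $M^{\#_Y}$ is $(\rho_{\Ybi},\chi)$-symmetric for some $\QQ$-linear $\chi\colon\QYbi\to R$ if and only if $(M^{\#_Y})_d=\chi\bigl(\GenYbi_d\bigr)$ for all $d\geq0$.

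The key observation is the following chain of equivalences, valid for \emph{any} $\QQ$-linear map $\psi\colon\QB^0\to R$:
\begin{align*}
M_d=\psi(\GenBo_d)\ \forall\,d\ &\iff\ (M^{\#_Y})_d=\psi\bigl(\GenBo_d^{\#_Y}\bigr)\ \forall\,d\\
&\iff\ (M^{\#_Y})_d=(\psi\circ\varphi_{\#})\bigl(\GenYbi_d\bigr)\ \forall\,d.
\end{align*}
The first equivalence holds because $\#_Y$ is the invertible triangular change of variables $Y_i\mapsto Y_1+\dots+Y_i$, and applying a $\QQ$-linear map to power-series coefficients commutes with any substitution in the formal variables, so $(M^{\#_Y})_d=M_d^{\#_Y}=\psi(\GenBo_d)^{\#_Y}=\psi(\GenBo_d^{\#_Y})$. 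The second is exactly Definition~\ref{def varphi}, which says $\GenBo_d^{\#_Y}=\varphi_{\#}(\GenYbi_d)$, both sides being $\sum_{w\in((\Ybi)^*)^{(d)}}\varphi_{\#}(w)\rho_{\Ybi}(w)$.

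To conclude, recall from Theorem~\ref{varphi sharp alg hom} that $\varphi_{\#}\colon(\QYbi,\ast)\to(\QB^0,\bst)$ is an \emph{algebra} isomorphism; hence $\psi\mapsto\psi\circ\varphi_{\#}$ is a bijection from $\QQ$-algebra morphisms $(\QB^0,\bst)\to R$ onto $\QQ$-algebra morphisms $(\QYbi,\ast)\to R$. Combining this with the displayed equivalence: $M$ is b-symmetril $\iff$ there is an algebra morphism $\varphi_{\bst}\colon(\QB^0,\bst)\to R$ with $M_d=\varphi_{\bst}(\GenBo_d)$ for all $d$ $\iff$ there is an algebra morphism $\varphi_\ast:=\varphi_{\bst}\circ\varphi_{\#}\colon(\QYbi,\ast)\to R$ with $(M^{\#_Y})_d=\varphi_\ast(\GenYbi_d)$ for all $d$ $\iff$ $M^{\#_Y}$ is symmetril; under this correspondence the coefficient maps are related by $\varphi_\ast=\varphi_{\bst}\circ\varphi_{\#}$.

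Since Theorem~\ref{varphi sharp alg hom} is already in hand, the argument is purely formal; the only steps requiring (routine) care are the commutation of the substitution $\#_Y$ with coefficientwise maps and the tautological but essential reading of Definition~\ref{def varphi} as the identity $\varphi_{\#}(\GenYbi_d)=\GenBo_d^{\#_Y}$. I do not expect any genuine obstacle here — all the real content sits in Theorem~\ref{varphi sharp alg hom}.
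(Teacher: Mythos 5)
Your argument is correct and is exactly the route the paper takes: the paper's proof of Corollary~\ref{symmetril and b-symmetril} simply cites Theorem~\ref{varphi sharp alg hom} together with Definitions~\ref{def symmetril} and~\ref{def b-symmetril}, and your write-up just makes explicit the two routine steps (that the substitution $\#_Y$ commutes with coefficientwise maps, and that Definition~\ref{def varphi} reads as $\varphi_{\#}(\GenYbi_d)=\GenBo_d^{\#_Y}$) plus the bijection $\psi\mapsto\psi\circ\varphi_{\#}$ on coefficient maps. No gaps; this is a faithful, slightly more detailed version of the paper's proof.
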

\begin{proof} This is a direct consequence of Theorem \ref{varphi sharp alg hom} and the definition of symmetrility and b-symmetrility (compare to Definition \ref{def symmetril}, \ref{def b-symmetril}).
\end{proof} \noindent
The algebra morphism $\varphi_{\#}:\QYbi\to\QB^0$ given in Theorem \ref{varphi sharp alg hom} is compatible with the coproducts $\dec$ and $\dec^0$.
\begin{prop} \label{compatibility dec coproducts} For all $d\geq1$, we have
\[(\varphi_{\#}\otimes\varphi_{\#})\circ \dec=\dec^0\circ \varphi_{\#}.\]
\end{prop}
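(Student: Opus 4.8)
The plan is to verify the identity on the level of generating series of words, where all the relevant coproducts already have explicit descriptions. First I would record that $\varphi_{\#}$ is, by its very definition (Definition \ref{def varphi}), the unique $\QQ$-linear map such that $(\varphi_{\#}\otimes\rho_{\Ybi})(\mathcal{W})_d = \GenBo_d^{\#_Y}$ as bimoulds; equivalently, applying $\varphi_{\#}$ to the first tensor factor of the generic diagonal series $\GenYbi_d$ produces $\GenBo_d^{\#_Y}$. Since $\dec$ on $\QYbi$ is the deconcatenation coproduct, which is ``read off'' from the generating series by Proposition \ref{dec gen series Ybi}, it suffices to apply $\varphi_{\#}\otimes\varphi_{\#}$ to that formula and compare the result, term by term in $i$, with the formula for $\dec^0$ on $\GenBo$ obtained in Proposition \ref{dec^0 gen series B}. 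Both sides will be expressed as sums over $i=0,\ldots,d$ of tensor products of generating series, so the proposition reduces to matching the $i$-th summands.

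The key computation is the following substitution bookkeeping. Applying $\varphi_{\#}\otimes\varphi_{\#}$ to the $i$-th term of $\dec\GenYbi$ in Proposition \ref{dec gen series Ybi} gives $\GenBo_i^{\#_Y}\bigl(\substack{X_1,\ldots,X_i\\ Y_1,\ldots,Y_i}\bigr)\otimes \GenBo_{d-i}^{\#_Y}\bigl(\substack{X_{i+1},\ldots,X_d\\ Y_{i+1},\ldots,Y_d}\bigr)$, because $\varphi_{\#}$ turns the diagonal series of the appropriate depth into the $\#_Y$-shifted $\B^0$ generating series in the corresponding block of variables. Unwinding the $\#_Y$-operation in each factor, the first factor becomes $\GenBo_i\bigl(\substack{X_1,\ldots,X_i\\ Y_1,\,Y_1+Y_2,\ldots,Y_1+\cdots+Y_i}\bigr)$ and the second becomes $\GenBo_{d-i}\bigl(\substack{X_{i+1},\ldots,X_d\\ Y_{i+1},\,Y_{i+1}+Y_{i+2},\ldots,Y_{i+1}+\cdots+Y_d}\bigr)$. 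Now I would perform the change of variables $Y_j \mapsto Y_j - Y_{j-1}$ (the inverse of the $\#_Y$ reparametrization on the full index set $1,\ldots,d$), so that the left-hand side of the proposition, read on generating series, is $\sum_{i=0}^d$ of the above after this global substitution. In the first factor the partial sums $Y_1+\cdots+Y_j$ telescope back to $Y_j$, while in the second factor $Y_{i+1}+\cdots+Y_j$ becomes $Y_j - Y_i$; this is exactly the $i$-th summand of $\dec^0\GenBo$ as written in Proposition \ref{dec^0 gen series B}. Hence the two generating-series expressions agree, and since $\rho_{\Ybi}$ is injective in each depth (Definition \ref{def gen series of words}(i)), equality of generating series forces the operator identity $(\varphi_{\#}\otimes\varphi_{\#})\circ\dec = \dec^0\circ\varphi_{\#}$.

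One point requiring a little care is that $\dec$ does \emph{not} preserve $\QB^0$ (this is precisely why $\dec^0$ was introduced via $\reg$), so I must be sure the comparison is carried out with the definition $\dec^0 = (\reg\otimes\reg)\circ\dec$ already unwound into the clean formula of Proposition \ref{dec^0 gen series B}; in other words, I should use the \emph{output} of Proposition \ref{dec^0 gen series B} rather than trying to commute $\varphi_{\#}$ past $\reg$ directly. With that in hand the argument is purely a matter of tracking the $Y$-variable reparametrizations through the tensor factors, and the main (mild) obstacle is simply keeping the two telescoping substitutions — the one hidden in $\#_Y$ and the one appearing in the formula for $\dec^0$ — consistent on each of the two blocks $\{1,\ldots,i\}$ and $\{i+1,\ldots,d\}$. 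No new ideas beyond the generating-series dictionary established in Sections \ref{gen series Ybi section}--\ref{gen series B regularization} are needed.
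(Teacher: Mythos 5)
Your proposal is correct and follows essentially the same route as the paper: both verify the identity on generating series by combining Proposition \ref{dec gen series Ybi} with Proposition \ref{dec^0 gen series B}, track the $\#_Y$ telescoping of the $Y$-variables separately on the blocks $\{1,\ldots,i\}$ and $\{i+1,\ldots,d\}$, and conclude via Definition \ref{def varphi}. The only cosmetic difference is that you apply the global substitution $Y_j\mapsto Y_j-Y_{j-1}$ to the left-hand side to land on $\dec^0\GenBo$, whereas the paper substitutes $Y_j\mapsto Y_1+\dots+Y_j$ into Proposition \ref{dec^0 gen series B} and matches the unshifted expressions; these are inverse reparametrizations of the same comparison.
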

\begin{proof} We will prove this on the level of generating series of words by using the formulas for $\dec$ and $\dec^0$ given in Proposition \ref{dec gen series Ybi} and \ref{dec^0 gen series B}. This allows us to straight-forwardly compute \\
\scalebox{0.95}{\parbox{.5\linewidth}{%
\begin{align*}
&(\#_Y\otimes\#_Y)\circ \dec \GenYbi\bi{X_1,\ldots,X_d}{Y_1,\ldots,Y_d}\\
&=\sum_{i=0}^d \GenYbi^{\#_Y}\bi{X_1,\ldots,X_i}{Y_1,\ldots,Y_i}\otimes\GenYbi^{\#_Y}\bi{X_{i+1},\ldots,X_d}{Y_{i+1},\ldots,Y_d} \\
&=\sum_{i=0}^d \GenYbi\bi{X_1,\ldots,X_i}{Y_1,Y_1+Y_2,\ldots,Y_1+\dots+Y_i}\otimes\GenYbi\bi{X_{i+1},\ldots,X_d}{Y_{i+1},Y_{i+1}+Y_{i+2},\ldots,Y_{i+1}+\dots+Y_d}.
\end{align*} }} \\
On the other hand, we have \\
\scalebox{0.95}{\parbox{.5\linewidth}{%
\begin{align*}
&\dec^0\GenBo^{\#_Y}\bi{X_1,\ldots,X_d}{Y_1,\ldots,Y_d}=\dec^0\GenBo\bi{X_1,\ldots,X_d}{Y_1,Y_1+Y_2,\ldots,Y_1+\dots+Y_d}\\
&=\sum_{i=0}^d \GenBo\bi{X_1,\ldots,X_i}{Y_1,Y_1+Y_2,\ldots,Y_1+\dots+Y_i}\otimes \GenBo\bi{X_{i+1},\ldots,X_d}{Y_{i+1},Y_{i+1}+Y_{i+2},\ldots,Y_{i+1}+\dots+Y_d}.
\end{align*} }} \\
By definition of the map $\varphi_{\#}$ (Definition \ref{def varphi}) we get the claimed formula.
\end{proof} \noindent
\begin{rem} Theorem \ref{varphi sharp alg hom} and Proposition \ref{compatibility dec coproducts} show that $\varphi_{\#}$ is an isomorphism compatible with the products and coproducts of $\QB^0$ and $\QYbi$. So since $(\QYbi,\ast,\dec)$ is a Hopf algebra, also $(\QB^0,\bst,\dec^0)$ must be a Hopf algebra. In particular, $\dec^0$ is coassociative and compatible with the product $\bst$.
\end{rem}
\noindent
There are two involutions defined on $\QB^0$ and $\QYbi$, which play an important role in the theory of multiple q-zeta values and which are compatible with the morphism $\varphi_{\#}$.
\begin{defi} \label{def tau} Let $\tau:\QB^0\to \QB^0$ be the involution given by $\tau(\1)=\1$ and
\[\tau(b_{k_1}b_0^{m_1}\dots b_{k_d}b_0^{m_d})=b_{m_d+1}b_0^{k_d-1}\dots b_{m_1+1}b_0^{k_1-1}\]
for all $k_1,\ldots,k_d\geq1,\ m_1,\ldots,m_d\geq0$.
\end{defi} \noindent
Extend the involution $\tau$ by $\QQ[X_1,Y_1,X_2,Y_2,\ldots]$-linearity to $\QB^0\llbracket X_1,Y_1,X_2,Y_2,\ldots\rrbracket $, then we obtain directly for each $d\geq0$ that
\[\tau\Bigg( \GenBo\bi{X_1,\ldots,X_d}{Y_1,\ldots,Y_d}\Bigg)=\GenBo\bi{Y_d,\ldots,Y_1}{X_d,\ldots,X_1}.\]
This motivates the following definition for bimoulds.
\begin{defi} \label{tau-invariant bimould}
For a bimould $M=(M_d)_{d\geq0}$, define the bimould $\tau(M)=(\tau(M)_d)_{d\geq0}$ by
\begin{align*} 
\operatorname{\tau}(M)_d\bi{X_1,\ldots,X_d}{Y_1,\ldots,Y_d}&=M_d\bi{Y_d,\ldots,Y_1}{X_d,\ldots,X_1}.
\end{align*}
We call a bimould $M$ \emph{$\tau$-invariant} if $\tau(M)=M$.
\end{defi} \noindent
The second involution defined on the algebra $\QYbi$ is originally given in terms of bimoulds, since its expression is much easier in this case.
\begin{defi} \label{def swap} For a bimould $M=(M_d)_{d\geq0}$, define the bimould $\swap(M)=(\swap(M)_d)_{d\geq0}$ as
\[\swap(M)_d\bi{X_1,\ldots,X_d}{Y_1,\ldots,Y_d}=M_d\bi{Y_1+\dots+Y_d,Y_1+\dots+Y_{d-1},\ldots,Y_1}{X_d,X_{d-1}-X_d,\ldots,X_1-X_2}.\]
A bimould $M$ is called \emph{swap invariant} if $\swap(M)=M$.
\vspace{0,2cm} \\
Define the involution $\swap:\QYbi\to \QYbi$ implicitly by 
\[(\swap\otimes\rho_{\Ybi})(\mathcal{W})_d\bi{X_1,\ldots,X_d}{Y_1,\ldots,Y_d}=\swap(\GenYbi)_d\bi{X_1,\ldots,X_d}{Y_1,\ldots,Y_d},\]
i.e., the coefficient of $X_1^{k_1-1}\frac{Y_1^{m_1}}{m_1!}\dots X_d^{k_d-1}\frac{Y_d^{m_d}}{m_d!}$ in $\swap(\GenYbi)_d$ is the image of the word $y_{k_1,m_1}\dots y_{k_d,m_d}$ under the map $\swap$.
\end{defi} \noindent
For example, we obtain
\begin{align} \label{swap on coefficients} 
\swap(y_{k_1,m_1})&=\frac{m_1!}{(k_1-1)!}y_{m_1+1,k_1-1}, \\
\swap(y_{k_1,m_1}y_{k_2,m_2})&=\sum_{u=0}^{m_1}\sum_{v=0}^{k_2-1} \frac{(-1)^v}{u!v!} \frac{m_1!}{(k_1-1)!}\frac{(m_2+u)!}{(k_2-1-v)!}y_{m_2+1+u,k_2-1-v}y_{m_1+1-u,k_1-1+v}. \nonumber
\end{align} 
In higher depths, it is hard to give an explicit formula for the involution $\swap$ on the algebra $\QYbi$, see for example \cite[Remark 3.14]{bi}.
\begin{thm} \label{Hopf iso Ybi B}
The map
\[\varphi_{\#}:(\QYbi,\ast,\dec)\to(\QB^0,\bst,\dec^0)\]
is an isomorphism of weight-graded Hopf algebras satisfying $\varphi_{\#}\circ \swap=\tau\circ\varphi_{\#}$.
\end{thm}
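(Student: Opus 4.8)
\emph{The Hopf-algebra isomorphism.} This part is an assembly of facts already at hand. Theorem~\ref{varphi sharp alg hom} gives that $\varphi_{\#}\colon(\QYbi,\ast)\to(\QB^0,\bst)$ is an isomorphism of weight-graded algebras, and Proposition~\ref{compatibility dec coproducts} gives $(\varphi_{\#}\otimes\varphi_{\#})\circ\dec=\dec^0\circ\varphi_{\#}$, i.e.\ $\dec^0=(\varphi_{\#}\otimes\varphi_{\#})\circ\dec\circ\varphi_{\#}^{-1}$ is the transport of $\dec$ along $\varphi_{\#}$. Since $\varphi_{\#}$ preserves the weight grading and sends $\1$ to $\1$, it intertwines units and counits as well, so it is a bijective morphism of bialgebras. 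As $(\QYbi,\ast,\dec)$ is a connected graded Hopf algebra (\cite{h}), transport of structure makes $(\QB^0,\bst,\dec^0)$ a Hopf algebra and $\varphi_{\#}$ an isomorphism of weight-graded Hopf algebras; alternatively, a bialgebra morphism between connected graded Hopf algebras automatically commutes with the antipodes.

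\emph{Reducing $\varphi_{\#}\circ\swap=\tau\circ\varphi_{\#}$ to generating series.} The maps $\varphi_{\#},\swap,\tau$ are $\QQ$-linear, and the monomials $\rho_{\Ybi}(w)$ for $w$ running over depth-$d$ words are pairwise distinct (Definition~\ref{def gen series of words}(i), applied to $\rho_{\Ybi}$); hence it suffices to verify the identity after pairing with $\rho_{\Ybi}$ in each depth $d$. Applying $\varphi_{\#}$ coefficientwise to the identity $(\swap\otimes\rho_{\Ybi})(\mathcal{W})_d=\swap(\GenYbi)_d$ of Definition~\ref{def swap} shows $(\varphi_{\#}\circ\swap\otimes\rho_{\Ybi})(\mathcal{W})_d=\varphi_{\#}(\swap(\GenYbi)_d)$, and applying $\tau$ coefficientwise to $(\varphi_{\#}\otimes\rho_{\Ybi})(\mathcal{W})_d=\GenBo_d^{\#_Y}$ of Definition~\ref{def varphi} shows $(\tau\circ\varphi_{\#}\otimes\rho_{\Ybi})(\mathcal{W})_d=\tau(\GenBo_d^{\#_Y})$. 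So the claim reduces to the generating-series identity
\[
\varphi_{\#}(\swap(\GenYbi)_d)=\tau(\GenBo_d^{\#_Y}),\qquad d\geq 0,
\]
where $\varphi_{\#}$ and $\tau$ act $\QQ[X_1,Y_1,X_2,Y_2,\dots]$-linearly on coefficients.

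\emph{Computing both sides.} For the right-hand side, combine the formula $\tau(\GenBo\bi{A_1,\dots,A_d}{B_1,\dots,B_d})=\GenBo\bi{B_d,\dots,B_1}{A_d,\dots,A_1}$ displayed after Definition~\ref{def tau} with the definition of the $\#_Y$-operation to get $\tau(\GenBo_d^{\#_Y}\bi{X_1,\dots,X_d}{Y_1,\dots,Y_d})=\GenBo_d\bi{Y_1+\dots+Y_d,\dots,Y_1}{X_d,\dots,X_1}$. For the left-hand side, note that applying $\varphi_{\#}$ coefficientwise to $\GenYbi_d=\sum_w w\,\rho_{\Ybi}(w)$ gives $\GenBo_d^{\#_Y}=\sum_w\varphi_{\#}(w)\,\rho_{\Ybi}(w)$ by Definition~\ref{def varphi}, and this commutes with substitution of the variables $X_i,Y_i$; hence $\varphi_{\#}(\GenYbi_d\bi{A_1,\dots,A_d}{B_1,\dots,B_d})=\GenBo_d^{\#_Y}\bi{A_1,\dots,A_d}{B_1,\dots,B_d}$ for arbitrary arguments. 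Feeding in $\swap(\GenYbi)_d\bi{X_1,\dots,X_d}{Y_1,\dots,Y_d}=\GenYbi_d\bi{Y_1+\dots+Y_d,\dots,Y_1}{X_d,X_{d-1}-X_d,\dots,X_1-X_2}$ from Definition~\ref{def swap} yields $\varphi_{\#}(\swap(\GenYbi)_d)=\GenBo_d^{\#_Y}\bi{Y_1+\dots+Y_d,\dots,Y_1}{X_d,X_{d-1}-X_d,\dots,X_1-X_2}$. Expanding the partial-sum $\#_Y$-operation, the lower row $(X_d,X_{d-1}-X_d,\dots,X_1-X_2)$ telescopes to $(X_d,X_{d-1},\dots,X_1)$, so the left-hand side equals $\GenBo_d\bi{Y_1+\dots+Y_d,\dots,Y_1}{X_d,\dots,X_1}$ as well, matching the right-hand side and completing the proof.

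\emph{Main obstacle.} There is no genuine difficulty: the argument is the reduction to generating series (using injectivity of $\rho_{\Ybi}$ on depth components) followed by bookkeeping of three substitutions --- the partial-sum shear $\#_Y$, the consecutive-difference shear in the lower row of $\swap$, and the row-reversal-and-exchange of $\tau$. The one point requiring care is that the $\#_Y$-shear and the difference-shear of $\swap$ are mutually inverse, which is exactly what makes the lower row telescope. Carrying out the computation directly on words would be unpleasant, since $\swap$ admits no clean closed form in higher depth (cf.~\cite[Remark~3.14]{bi}); the generating-series formalism of Section~\ref{quasi-shuffle and gen series} is what renders it routine.
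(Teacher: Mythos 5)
Your proposal is correct and follows essentially the same route as the paper: the Hopf-algebra part is assembled from Theorem \ref{varphi sharp alg hom} and Proposition \ref{compatibility dec coproducts}, and the identity $\varphi_{\#}\circ\swap=\tau\circ\varphi_{\#}$ is verified on generating series via exactly the two computations the paper performs — the telescoping of the $\#_Y$-shear against the difference-shear in $\swap$, and the effect of $\tau$ on $\GenBo^{\#_Y}$, both yielding $\GenBo_d\bi{Y_1+\dots+Y_d,\ldots,Y_1}{X_d,\ldots,X_1}$. The only (harmless) difference is presentational: you telescope on the $\GenBo^{\#_Y}$ side after transporting by $\varphi_{\#}$, whereas the paper does the same telescoping as $\#_Y\circ\swap\,\GenYbi$ before invoking the definition of $\varphi_{\#}$, and you spell out the coefficient-comparison and connected-graded-bialgebra details the paper leaves implicit.
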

\begin{proof} The map $\varphi_{\#}$ is an isomorphism of weight-graded Hopf algebras by Theorem \ref{varphi sharp alg hom} and Proposition \ref{compatibility dec coproducts}. Moreover, one verifies directly on the level of generating series of words
\begin{align*}
\#_Y\circ\swap \GenYbi\bi{X_1,\ldots,X_d}{Y_1,\ldots,Y_d}&=\GenYbi^{\#_Y}\bi{Y_1+\dots+Y_d,Y_1+\dots+Y_{d-1},\ldots,Y_1}{X_d,X_{d-1}-X_d,\ldots,X_1-X_2}\\
&=\GenYbi\bi{Y_1+\dots+Y_d,Y_1+\dots+Y_{d-1},\ldots,Y_1}{X_d,X_{d-1},\ldots,X_1},
\end{align*}
and
\begin{align*}
\tau \GenBo^{\#_Y}\bi{X_1,\ldots,X_d}{Y_1,\ldots,Y_d}&=\tau\GenBo\bi{X_1,\ldots,X_d}{Y_1,Y_1+Y_2,\ldots,Y_1+\dots+Y_d}\\
&=\GenBo\bi{Y_1+\dots+Y_d,Y_1+\dots+Y_{d-1},\ldots,Y_1}{X_d,X_{d-1},\ldots,X_1}.
\end{align*}
By definition of the map $\varphi_{\#}$ (Definition \ref{def varphi}), we deduce that $\varphi_{\#}\circ \swap=\tau\circ\varphi_{\#}$.
\end{proof} \noindent
An immediate consequence of Theorem \ref{Hopf iso Ybi B} is the following.
\begin{cor} \label{swap invariant and tau invariant} A bimould $M=(M_d)_{d\geq0}$ is $\tau$-invariant if and only if the bimould $M^{\#_Y}$ is swap invariant.
\end{cor}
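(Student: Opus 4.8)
The plan is to reduce the statement to the single variable-substitution identity
\[
\swap\!\big(M^{\#_Y}\big)=\big(\tau(M)\big)^{\#_Y}
\qquad\text{for every bimould } M,
\]
where $\tau$ and $\swap$ are the bimould operations of Definition \ref{tau-invariant bimould} and Definition \ref{def swap}. Granting this, the corollary is purely formal. The operation $M\mapsto M^{\#_Y}$ is a bijection on bimoulds, its inverse $M\mapsto M^{\#_Y^{-1}}$ being the one written down right after its definition; hence $M^{\#_Y}$ is swap invariant, that is $\swap(M^{\#_Y})=M^{\#_Y}$, if and only if $\big(\tau(M)\big)^{\#_Y}=M^{\#_Y}$, and applying $\#_Y^{-1}$ this holds if and only if $\tau(M)=M$, that is, $M$ is $\tau$-invariant. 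Put differently, $\tau$-invariance is exactly invariance under the $\#_Y$-conjugate of $\swap$.

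To prove the identity I would simply evaluate both sides from the definitions. Unfolding $M\mapsto M^{\#_Y}$ inside $\swap$ and telescoping the lower entries via $X_d+(X_{d-1}-X_d)+\dots+(X_{d+1-j}-X_{d+2-j})=X_{d+1-j}$ gives
\[
\swap\!\big(M^{\#_Y}\big)_d\bi{X_1,\ldots,X_d}{Y_1,\ldots,Y_d}=M_d\bi{Y_1+\dots+Y_d,\ Y_1+\dots+Y_{d-1},\ \ldots,\ Y_1}{X_d,\ X_{d-1},\ \ldots,\ X_1},
\]
and unfolding $M\mapsto M^{\#_Y}$ inside $\tau$ produces the identical right-hand side, this time using the reversal in the definition of $\tau$. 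Both sides are honest substitutions of the formal variables and use nothing about $M$; they are the same elementary substitutions already performed, for the particular bimoulds $\GenYbi$ and $\GenBo$, in the proof of Theorem \ref{Hopf iso Ybi B}.

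There is no real obstacle here: the whole content is the bookkeeping of the simultaneous index reversal and of the partial-sum (telescoping) substitutions, which is routine. Alternatively, one can obtain the corollary without recomputing by applying $\varphi_{\#}\otimes\rho_{\Ybi}$ to the generic diagonal series $\mathcal{W}$ and combining the relation $\varphi_{\#}\circ\swap=\tau\circ\varphi_{\#}$ of Theorem \ref{Hopf iso Ybi B} with the defining property of $\varphi_{\#}$ (Definition \ref{def varphi}); but the direct substitution check is the most transparent.
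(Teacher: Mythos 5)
Your argument is correct and is essentially the paper's: the corollary is obtained there as an immediate consequence of Theorem \ref{Hopf iso Ybi B}, whose proof consists of exactly the two telescoping substitution computations you perform (carried out for $\GenYbi$ and $\GenBo^{\#_Y}$, both reducing to $\bi{Y_1+\dots+Y_d,\ldots,Y_1}{X_d,\ldots,X_1}$). You merely state the identity $\swap\big(M^{\#_Y}\big)=\big(\tau(M)\big)^{\#_Y}$ for an arbitrary bimould $M$ and conclude by invertibility of $\#_Y$, which is a valid and equally transparent packaging of the same check.
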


\section{The algebra of multiple  $\operatorname{q}$-zeta values} \label{qMZV}

\noindent
We introduce the algebra of multiple q-zeta values $\Zq$ and explain its relations to multiple zeta values and polynomial functions on partitions. As an application of the previous general results, we will present a particular nice spanning set for $\Zq$, the balanced multiple q-zeta values.
\begin{defi} \label{def generic qMZV} (\cite{bk})
To integers $s_1\geq 1,\ s_2,\ldots,s_l\geq 0$ and polynomials $R_1\in t\QQ[t],\\ R_2,\ldots,R_l\in$ $\QQ[t]$, associate the \emph{generic multiple q-zeta value}
\[\zeta_q(s_1,...,s_l;R_1,...,R_l)=\sum_{n_1>\dots>n_l>0} \frac{R_1(q^{n_1})}{(1-q^{n_1})^{s_1}}\cdots \frac{R_l(q^{n_l})}{(1-q^{n_l})^{s_l}}\in \QQ\llbracket q\rrbracket.\]
\end{defi} \noindent
The assumptions $s_1\geq 1$ and $R_1\in t\QQ[t]$ are necessary for convergence. 
\begin{prop} \label{limit generic qMZV} (\cite[p. 6]{bk}) For integers $s_1\geq 2,\ s_2,\ldots,s_l\geq 1$ and polynomials $R_1\in t\QQ[t],\ R_2,\ldots,R_l\in$ $\QQ[t]$, one has
\[\lim_{q\to 1} (1-q)^{s_1+\dots+s_l}\zeta_q(s_1,...,s_l;R_1,...,R_l)=R_1(1)\cdots R_l(1)\zeta(s_1,...,s_l).\]
\vspace{-1cm} \\ \qed
\end{prop}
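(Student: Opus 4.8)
The plan is to take the limit termwise in the defining series of $\zeta_q(s_1,\dots,s_l;R_1,\dots,R_l)$ and then justify interchanging the limit with the (infinite) summation; essentially all of the difficulty lies in this interchange.

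First I would record the termwise limit. Writing $\tfrac{1-q}{1-q^{n}}=\tfrac{1}{1+q+\dots+q^{n-1}}$ we have $\lim_{q\to 1}\tfrac{1-q}{1-q^{n}}=\tfrac1n$, and since $\lim_{q\to 1}R_i(q^{n})=R_i(1)$ this gives, for each fixed $n_1>\dots>n_l>0$,
\[
\prod_{i=1}^{l}\frac{(1-q)^{s_i}R_i(q^{n_i})}{(1-q^{n_i})^{s_i}}\ \xrightarrow{\,q\to 1\,}\ \prod_{i=1}^{l}\frac{R_i(1)}{n_i^{s_i}}.
\]
If one may sum over $n_1>\dots>n_l>0$ and pass the limit inside, the right-hand side is $R_1(1)\cdots R_l(1)$ times $\sum_{n_1>\dots>n_l>0}n_1^{-s_1}\cdots n_l^{-s_l}=\zeta(s_1,\dots,s_l)$, the last series converging precisely because $s_1\ge 2$ and $s_2,\dots,s_l\ge 1$, which is exactly what is assumed.

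The interchange I would carry out by an $\varepsilon$-argument with a tail estimate uniform for $q\in[\tfrac12,1)$, using two elementary bounds: (a) $0<\tfrac{1-q}{1-q^{n}}\le 1$ always, $\tfrac{1-q}{1-q^{n}}\le\tfrac2n$ when $n(1-q)\le 1$ (from Bernoulli's inequality $q^{j}\ge 1-j(1-q)$), and $\tfrac{1-q}{1-q^{n}}\le 2(1-q)$ when $n(1-q)>1$ (then $q^{n}\le e^{-1}$, so $1-q^{n}\ge\tfrac12$); and (b) since $R_1\in t\QQ[t]$ vanishes at $0$, $|R_1(q^{n_1})|\le C q^{n_1}$, while $|R_i(q^{n_i})|\le C_i$ for all $i$, with $C,C_i$ the sup-norms of $R_1/t$ and $R_i$ on $[0,1]$. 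Fixing $N$ and splitting the sum by whether $n_1\le N$ or $n_1>N$: the first piece is a finite sum, so it tends as $q\to 1$ to $R_1(1)\cdots R_l(1)\sum_{N\ge n_1>\dots>n_l>0}n_1^{-s_1}\cdots n_l^{-s_l}$, which approximates $R_1(1)\cdots R_l(1)\zeta(s_1,\dots,s_l)$ to within a quantity small for $N$ large; for the second piece, on the sub-range where all $n_i\le(1-q)^{-1}$ (automatic once $n_1\le(1-q)^{-1}$) bound (a) gives a contribution $\le C'\sum_{n_1>N}(1+\log n_1)^{l-1}n_1^{-s_1}$, which is small for $N$ large (here $s_1\ge 2$ is used) and uniform in $q$; and on the remaining sub-range, where the indices exceeding $(1-q)^{-1}$ form a prefix $\{1,\dots,k\}$ with $k\ge 1$, using $|R_1(q^{n_1})|\le C q^{n_1}$, the bound $2(1-q)$ for those $k$ indices and $\tfrac2{n_i}$ for the others, and summing the resulting geometric series in $n_1$, one bounds this part by $\sum_{k=1}^{l}c_k\,(1-q)^{(s_1-1)+\dots+(s_k-1)}(\log\tfrac1{1-q})^{l-k}\to 0$ as $q\to 1$ (again using $s_1\ge 2$). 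Letting first $q\to 1$ and then $N\to\infty$ completes the proof.

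I expect the uniform tail estimate to be the only real obstacle: near $q=1$ the generic multiple $q$-zeta series converge "slowly", so there is no single summable majorant valid on a left neighbourhood of $q=1$, and one must genuinely exploit both the geometric decay from the zero of $R_1$ at the origin and the $\tfrac1n$-decay of $\tfrac{1-q}{1-q^{n}}$ in the range $n\lesssim(1-q)^{-1}$. Alternatively, since every generic multiple $q$-zeta value is a $\QQ$-linear combination of multiple $q$-zeta values in any fixed standard model, one could instead deduce the statement from the known $q\to 1$ asymptotics of that model.
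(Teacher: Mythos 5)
Your argument is correct, but note that the paper does not prove this proposition at all: it is quoted from \cite[p.~6]{bk} and the \qed{} simply marks it as cited, so what you have produced is a self-contained analytic proof of a fact the paper takes as input. Your proof is the natural one and the estimates check out: the termwise limit is immediate from $\tfrac{1-q}{1-q^n}=\tfrac1{1+q+\dots+q^{n-1}}\to\tfrac1n$, and your three bounds are valid ($\tfrac{1-q}{1-q^n}\le 1$ always, $\le\tfrac2n$ for $n(1-q)\le1$ via Bernoulli, $\le 2(1-q)$ for $n(1-q)>1$ since then $q^n\le e^{-1}$), as is the use of $R_1\in t\QQ[t]$ to get $|R_1(q^{n_1})|\le Cq^{n_1}$. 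The splitting $n_1\le N$ / $N<n_1\le(1-q)^{-1}$ / $n_1>(1-q)^{-1}$ then does exactly what is needed: the middle range is controlled uniformly in $q$ by $\sum_{n_1>N}n_1^{-s_1}(1+\log n_1)^{l-1}$ (using $s_1\ge2$, $s_i\ge1$), and in the last range, after bounding the at most $n_1^{k-1}$ choices of $n_2,\dots,n_k$ in the prefix and using $\sum_{n_1}n_1^{k-1}q^{n_1}=O((1-q)^{-k})$, one lands precisely on your stated bound $c_k(1-q)^{(s_1-1)+\dots+(s_k-1)}\bigl(\log\tfrac1{1-q}\bigr)^{l-k}\to0$; it would be worth making that counting factor explicit when you write this up, since it is the only place where the "geometric series in $n_1$" hides a sum over several variables. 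Your closing alternative (reducing to known $q\to1$ asymptotics of a fixed model by $\QQ$-linearity) is essentially what the cited reference does implicitly, but it requires matching the normalizing power of $(1-q)$ across the terms of the linear combination, so the direct estimate you give is arguably cleaner.
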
 \noindent
Following H. Bachmann and U. Kühn (\cite{bk}), we consider the following kind of multiple q-zeta values.
\begin{defi} \label{def Zq} 
Define the $\QQ$-vector space
\[\Zq=\spanQ\!\big\{\zeta_q(s_1,...,s_l;R_1,...,R_l)\ \big|\ l\geq 0,s_1\geq 1,\ s_2,...,s_l\geq 0,\deg(R_j)\leq s_j\big\},\]
where we set $\zeta_q(\emptyset;\emptyset)=1$.
\end{defi} \noindent 
The space $\Zq$ contains all models for multiple q-zeta values studied in the literature, so this can be seen as a model-free approach to them. For $\zeta_q(s_1;R_1),\zeta_q(s_2;R_2)\in \Zq$, the usual power series multiplication reads
\[\zeta_q(s_1;R_1)\cdot \zeta_q(s_2;R_2)=\zeta_q(s_1,s_2;R_1,R_2)+\zeta_q(s_2,s_1;R_2,R_1)+\zeta_q(s_1+s_2;R_1R_2).\]
Since $\deg(R_1R_2)\leq s_1+s_2$, the product is also an element in $\Zq$. Similar computations for arbitrary multi indices show that $\Zq$ is an associative, commutative algebra. Moreover, the algebra $\Zq$ contains the algebra $\quasimod$ of quasi-modular forms with rational coefficients (as introduced in \cite{kz}) and is closed under the well-known derivation $q\frac{\diff}{\diff q}$.
\begin{rem}
The additional assumption on the degree of the polynomials $R_j$ in Definition \ref{def Zq} can be justified by its relations to polynomial functions on partitions. More precisely, the original definition of the space $\Zq$ was given by H. Bachmann as the span of the bi-brackets (\cite{ba}), which can be seen as generating series of monomial functions on partitions. In \cite[cf (1.6)]{bi} it is shown that the space $\Zq$ is exactly the image of the polynomial functions on partitions under the q-bracket. \\
Let $\lambda=(1^{m_1}2^{m_2}3^{m_3}\dots)$ be a partition of some natural number $N$ of length $d$, i.e., the multiplicities $m_i\in \ZZ_{\geq0}$ are nonzero only for finitely many indices $i_1,\ldots,i_d$ and one has $\sum_{i\geq 1} m_ii=N$. A polynomial $f\in \QQ[X_1,\ldots,X_d,Y_1,\ldots,Y_d]$ can be evaluated at the partition $\lambda$ by
\[f(\lambda)=f(i_1,\ldots,i_d,m_{i_1},\ldots,m_{i_d}).\]
Denote by $\mathcal{P}(N,d)$ the set of all partitions of $N$ of length $d$. Then we associate to a polynomial $f\in \QQ[X_1,\ldots,X_d,Y_1,\ldots,Y_d]$ the generating series 
\[\operatorname{Gen}_f(q)=\sum_{N\geq1}\left(\sum_{\lambda\in \mathcal{P}(N,d)} f(\lambda)\right)q^N.\]
Whenever $f\in\QQ[X_1,\ldots,X_d,Y_1,\ldots,Y_d]$ is chosen to be a monomial, then $\operatorname{Gen}_f(q)$ is equal to a bi-bracket of depth $d$, more details are given in \cite[Theorem 1.3]{bri}. Therefore, as a consequence of \cite[Theorem 2.3]{bk} we obtain that the space $\Zq$ is spanned by the generating series $\operatorname{Gen}_f(q)$ with $f\in\QQ[X_1,\ldots,X_d,Y_1,\ldots,Y_d]$, $d\geq0$.
This indicates that the elements in $\Zq$ should be invariant under some involution corresponding to conjugation of partitions.
\end{rem}

\section{Combinatorial bi-multiple Eisenstein series} \label{CMES}

\noindent
We present the combinatorial bi-multiple Eisenstein series constructed in \cite{bb}. They form a spanning set for the space $\Zq$, so they are a particular model for multiple q-zeta values. Combinatorial bi-multiple Eisenstein series satisfy a weight-graded product formula and are invariant under a weight-homogeneous involution. Therefore, they should induce a grading on $\Zq$, which extends the weight-grading of the algebra $\quasimod$ of quasi-modular forms with rational coefficients. Their construction is inspired by the Fourier expansion of multiple Eisenstein series (\cite[Theorem 1.4]{ba2}). Thus, the combinatorial bi-multiple Eisenstein series give a natural connection between the space $\Zq$ and multiple Eisenstein series. In particular, they should give a description of all relations between multiple Eisenstein series. We will recall the construction of the combinatorial bi-multiple Eisenstein series as given in \cite{bb}.
\begin{defi} \label{def b}
By the work of G. Racinet (\cite{ra}) and also the combination of the works of V. G. Drinfeld (\cite{dr}) and H. Furusho (\cite{fu}) there exists a rational solution to the extended double shuffle equations
\[\beta(k_1,\ldots,k_d)\in \QQ,\qquad k_1,\ldots,k_d\in\ZZ_{\geq1},\ k_1>1,\]
such that
\begin{align} \label{beta condition in depth 1}
\beta(k)=-\frac{B_k}{2k!},\ k \text{ even}, \qquad \beta(k)=0,\ k\text{ odd}.
\end{align}
Denote by $\beta_\ast(k_1,\ldots,k_d),\ k_1,\ldots,k_d\geq1$, the corresponding stuffle regularized elements and define for all $d\geq1$
\begin{align} \label{def mould b} \mathfrak{b}_d(X_1,\ldots,X_d)=\sum_{k_1,\ldots,k_d\geq1} \beta_\ast(k_1,\ldots,k_d)X_1^{k_1-1}\dots X_d^{k_d-1}.
\end{align}
Moreover set $\mathfrak{b}_0=1$, then $\mathfrak{b}=(\mathfrak{b}_d)_{d\geq0}$ is a mould with coefficients in $\QQ$.
\end{defi} \noindent
In general, a solution to the extended double shuffle equations is not unique. In the following, we will fix the mould $\mathfrak{b}$ and the whole construction of the combinatorial bi-multiple Eisenstein series will depend on this choice. 
\begin{defi}
Define the bimould $\mathfrak{b}=(\mathfrak{b}_d)_{d\geq0}$ by $\mathfrak{b}_0=1$ and for $d\geq1$ by
\begin{align*}
\mathfrak{b}_d\bi{X_1,\dots,X_d}{Y_1,\dots,Y_d} = \sum_{0\leq i\leq  j \leq d} \gamma_i  \mathfrak{b}_{j-i}(Y_1+\dots+Y_{j-i},\dots,Y_1+Y_2,Y_{1}) \mathfrak{b}_{d-j}(X_{j+1},\dots,X_d),
\end{align*} 
where the coefficients $\gamma_i$ are defined by
\begin{align*}
\sum_{i\geq0} \gamma_i T^i = \exp\left( \sum_{n\geq2} \frac{(-1)^{n+1}}{n} \frac{B_n}{2 n!} T^n \right).
\end{align*}
\end{defi} \noindent
Independent of the shape of the mould $\mathfrak{b}$, this construction will always yield a swap invariant bimould. Since the coefficients of the mould $\mathfrak{b}$ satisfy the extended double shuffle relations, we obtain that the bimould $\mathfrak{b}$ is also symmetril.
\begin{defi}
Define the bimould $\widetilde{\mathfrak{b}}=(\widetilde{\mathfrak{b}}_d)_{d\geq 0}$ by $\widetilde{\mathfrak{b}}_0=1$ and
\begin{align*}
\widetilde{\mathfrak{b}}_d\bi{X_1,\ldots,X_d}{Y_1,\ldots, Y_d}= \sum_{i=0}^d \frac{(-1)^i}{2^i i!} \mathfrak{b}_{d-i}\bi{X_{i+1},\ldots,X_d}{-Y_1,\ldots, -Y_{d-i}},\qquad d\geq1.
\end{align*}
For each $u\geq 1$, let $\mathfrak{L}^{(u)}=(\mathfrak{L}^{(u)}_d)_{d\geq0}$ be the bimould given by $\mathfrak{L}^{(u)}_0=1$ and 
\begin{align*}
&\mathfrak{L}_d^{(u)}\bi{X_1,\ldots, X_d}{Y_1,\ldots, Y_d}=\sum_{j=1}^d \mathfrak{b}_{j-1}\bi{X_1-X_j,\ldots,X_{j-1}-X_j}{Y_1,\ldots,Y_{j-1}}L_u\bi{X_j}{Y_1+\dots+ Y_d} \\
&\hspace{10cm}\cdot \widetilde{\mathfrak{b}}_{d-j}\bi{X_d-X_j,\ldots, X_{j+1}-X_j}{Y_d,\ldots,Y_{j+1}},
\end{align*}
where the power series $L_u\binom{X}{Y}$ is defined by
\[L_u\bi{X}{Y}=\frac{\exp(X+uY)q^u}{1-\exp(X)q^u},\qquad u\geq1.\]
\end{defi} \noindent
It can be shown (\cite[Lemma 6.20]{bb}) that the bimould $\mathfrak{L}^{(u)}$ is symmetril, the proof uses the additional conditions on the depth $1$ terms of $\mathfrak{b}$ given in \eqref{beta condition in depth 1}.
\begin{defi} Define the bimould $\mathfrak{g}^\ast=(\mathfrak{g}^\ast_d)_{d\geq0}$ by  $\mathfrak{g}^\ast_0=1$ and 
\begin{align*}
\mathfrak{g}^\ast_d\bi{X_1,\ldots,X_d}{Y_1,\ldots, Y_d} = \sum_{\substack{1 \leq j \leq d\\0 = d_0< d_1 < \dots < d_{j-1} < d_j = d\\ u_1 > \dots > u_j > 0}}  \prod_{i=1}^j \mathfrak{L}^{(u_i)}_{d_i-d_{i-1}} \bi{X_{d_{i-1}+1},\ldots,X_{d_i}}{Y_{d_{i-1}+1},\ldots,Y_{d_i}}.
\end{align*}
\end{defi} \noindent
Since the bimould $\mathfrak{L}^{(u)}$ is symmetril, also $\mathfrak{g}^\ast$ is symmetril. This implication is independent of the explicit shape of $\mathfrak{L}^{(u)}$.
\begin{defi} \label{def G}
Let $\mathfrak{G}=(\mathfrak{G}_d)_{d\geq 0}$ be the mould product of $\mathfrak{g}^\ast$ and $\mathfrak{b}$, i.e., one has $\mathfrak{G}_0=1$ and for $d\geq1$
\begin{align*}
\Gcmes{d}{X_1,\ldots,X_d}{Y_1,\ldots,Y_d}=\sum_{i=0}^d \mathfrak{g}_i^\ast\bi{X_1,\ldots,X_i}{Y_1,\ldots,Y_i}\mathfrak{b}_{d-i}\bi{X_{i+1},\ldots,X_d}{Y_{i+1},\ldots,Y_d}.
\end{align*}
\end{defi} \noindent
The main result of \cite{bb} was the following.
\begin{thm} \label{G symmetril, swap invariant} \cite[Theorem 6.5.]{bb} The bimould $\mathfrak{G}$ is symmetril and swap invariant. \qed 
\end{thm}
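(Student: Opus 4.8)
The plan is to prove the two assertions separately: symmetrility follows formally from the Hopf‑algebraic formalism of Section~\ref{quasi-shuffle and gen series}, while swap invariance is the genuine content and requires reorganising the $q$-series building blocks into a conjugation‑symmetric lattice sum.

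For symmetrility, recall from Definition~\ref{def G} that $\mathfrak{G}=\mathfrak{g}^\ast\times\mathfrak{b}$ is the mould product of $\mathfrak{g}^\ast$ and the bimould $\mathfrak{b}$, i.e.\ $\Gcmes{d}{X_1,\ldots,X_d}{Y_1,\ldots,Y_d}$ is obtained from $\mathfrak{g}^\ast$ and $\mathfrak{b}$ by simultaneous deconcatenation of the $X$- and $Y$-rows. Extracting $\rho_{\Ybi}$-coefficients on both sides and using condition (ii) of Definition~\ref{def gen series of words}, the coefficient map of $\mathfrak{G}$ is the convolution $(\varphi_{\mathfrak{g}^\ast}\otimes\varphi_{\mathfrak{b}})\circ\dec$ of the coefficient maps of $\mathfrak{g}^\ast$ and $\mathfrak{b}$. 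Since $(\QYbi,\ast,\dec)$ is a Hopf algebra (Hoffman's structure theorem), $\dec$ is an algebra morphism for $\ast$; as $\mathfrak{g}^\ast$ and $\mathfrak{b}$ are symmetril — $\mathfrak{b}$ because its mould coefficients satisfy the extended double shuffle equations, $\mathfrak{g}^\ast$ because each $\mathfrak{L}^{(u)}$ is symmetril and symmetrility is preserved by the ordered sum over $u_1>\dots>u_j>0$ (again a convolution argument) — their coefficient maps are algebra morphisms for $\ast$, hence so is their convolution over the commutative ring $\QQ\llbracket q\rrbracket$. Thus $\mathfrak{G}$ is symmetril. Equivalently: symmetril bimoulds form a group under mould multiplication.

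For swap invariance, the first step is to expand the elementary building block as a geometric series,
\[
L_u\binom{X}{Y}=\frac{\exp(X+uY)\,q^u}{1-\exp(X)\,q^u}=\sum_{v\ge 1}\exp(vX+uY)\,q^{uv},
\]
which makes manifest that $\sum_{u\ge 1}L_u\binom{X}{Y}=\sum_{u,v\ge 1}\exp(vX+uY)\,q^{uv}$ is symmetric under interchanging the pairs $(X,u)\leftrightarrow(Y,v)$. Substituting this into $\mathfrak{L}^{(u)}$ and then into $\mathfrak{g}^\ast$, I would rewrite $\mathfrak{g}^\ast_d$, and hence $\mathfrak{G}_d=(\mathfrak{g}^\ast\times\mathfrak{b})_d$, as a sum over lattice configurations — pairs of tuples $(\underline u,\underline v)$ constrained by the ordering $u_1>\dots>u_j>0$ — decorated by factors of $\mathfrak{b}$ and $\widetilde{\mathfrak{b}}$. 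The claim is then that $\swap$, which reverses the order of the entries and exchanges the $X$- and $Y$-rows with the indicated partial sums (Definition~\ref{def swap}), acts on these configurations exactly by transposing the associated Young‑diagram‑like region; since transposition is a bijection, the undecorated sum is automatically invariant, and the specific decorations — the coefficients $\gamma_i$ in the bimould $\mathfrak{b}$, the $\tfrac{(-1)^i}{2^i i!}$ in $\widetilde{\mathfrak{b}}$, and the depth‑$1$ constraints \eqref{beta condition in depth 1} — are arranged so that the decorated sum is preserved as well.

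The main obstacle is controlling the degenerate configurations: the lower‑depth ``boundary'' contributions arising both from writing $\mathfrak{G}$ as a mould product with $\mathfrak{b}$ and from the transposition identifying configurations of different shape. I would organise this as an induction on the depth $d$. For the top‑depth part the transposition argument applies verbatim; the lower‑depth corrections are absorbed using (i) the swap invariance of the bimould $\mathfrak{b}$, which holds independently of the underlying mould $\mathfrak{b}$ by its construction with the $\gamma_i$, (ii) the already‑established symmetrility of $\mathfrak{G}$ to rewrite products of lower‑depth pieces, and (iii) the precise shape of $\widetilde{\mathfrak{b}}$, whose coefficients are exactly the stuffle‑regularisation corrections making the degenerate terms cancel. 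The depth‑$1$ and depth‑$2$ identities, checkable directly against the explicit formulas \eqref{swap on coefficients}, furnish the base of the induction and a consistency check. Assembling these steps yields $\swap(\mathfrak{G})=\mathfrak{G}$, which together with the first part gives the theorem and reproves \cite[Theorem~6.5]{bb}.
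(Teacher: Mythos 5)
The theorem you are proving is not actually proved in this paper: it is quoted from \cite[Theorem 6.5]{bb} with a \qed, and the only indication given is the two-line remark following it (symmetrility because $\mathfrak{G}$ is a mould product of symmetril bimoulds; swap invariance via a decomposition of $\mathfrak{G}$ into a sum of bimoulds whose swap invariance can be shown). Your first half coincides with that indication and is essentially correct: the coefficient map of the mould product $\mathfrak{g}^\ast\times\mathfrak{b}$ is the convolution of the two coefficient maps with respect to $\dec$, and since $(\QYbi,\ast,\dec)$ is a bialgebra, convolution of $\ast$-characters with values in the commutative algebra $\QQ\llbracket q\rrbracket$ is again a $\ast$-character. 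One caveat: the symmetrility of $\mathfrak{g}^\ast$ from that of the $\mathfrak{L}^{(u)}$ is not ``again a convolution argument'' --- it is the ordered-sum (harmonic) argument, where coincidences $u_i=u_j'$ in the merged sum are resolved using the symmetrility of the single $\mathfrak{L}^{(u)}$; this is what \cite{bb} proves, and the depth-one conditions \eqref{beta condition in depth 1} are used there to establish that $\mathfrak{L}^{(u)}$ is symmetril in the first place, not in the convolution step.

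The swap-invariance half has a genuine gap. The geometric-series identity $L_u\bi{X}{Y}=\sum_{v\geq1}\exp(vX+uY)q^{uv}$ only makes the depth-one statement manifest. In depth $d\geq2$ the building block $\mathfrak{L}^{(u)}_d$ involves the shifted variables $X_i-X_j$, the decorations $\mathfrak{b}_{j-1}$ and $\widetilde{\mathfrak{b}}_{d-j}$, and the constraint $u_1>\dots>u_j>0$ sits only on the $u$-side, with no matching constraint on the $v$'s; so the resulting ``lattice configuration'' sum is not manifestly symmetric under exchanging the two rows, and your central claims --- that $\swap$ acts exactly by transposition of the configurations and that the decorations ($\gamma_i$, the $\tfrac{(-1)^i}{2^i i!}$ in $\widetilde{\mathfrak{b}}$) are ``arranged so that the decorated sum is preserved'' --- are precisely the content of the theorem, asserted rather than derived. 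Likewise, the proposed absorption of the lower-depth boundary terms via (i) the swap invariance of $\mathfrak{b}$, (ii) the symmetrility of $\mathfrak{G}$, and (iii) the shape of $\widetilde{\mathfrak{b}}$ is not substantiated: it is not explained how symmetrility (a product formula) would cancel additive boundary terms in the swap computation, nor what identity relating $\mathfrak{b}$ and $\widetilde{\mathfrak{b}}$ is being invoked. The proof in \cite{bb} requires exactly this missing work: a specific decomposition of $\mathfrak{G}$ modeled on the Fourier expansion of multiple Eisenstein series, the swap invariance of the bimould $\mathfrak{b}$ (which your sketch correctly takes as input), and an explicit relation between $\mathfrak{b}$ and $\widetilde{\mathfrak{b}}$ that makes the degenerate terms cancel. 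As it stands, your text is a plausible plan whose hard step --- the cancellation of the degenerate configurations --- is missing, so it does not yet constitute a proof of \cite[Theorem 6.5]{bb}.
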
 \noindent
As a product of symmetril bimoulds, $\mathfrak{G}$ is symmetril. To obtain the swap invariance, one decomposes $\mathfrak{G}$ into a sum of bimoulds, for which the swap invariance can be shown.
\begin{defi} \label{def cmes} For $k_1,\dots,k_d \geq 1$, $m_1,\dots,m_d\geq 0$ define the \emph{combinatorial bi-multiple Eisenstein series} $G\binom{k_1,\ldots,k_d}{m_1,\ldots,m_d}$ to be the (normalized) coefficients of the bimould $\mathfrak{G}$, 
\begin{align*}
\Gcmes{d}{X_1,\ldots,X_d}{Y_1,\ldots,Y_d}=\sum_{\substack{k_1,\dots,k_d \geq 1 \\ m_1,\dots,m_d\geq 0}}\cmes{k_1,\ldots,k_d}{m_1,\ldots,m_d} X_1^{k_1-1}\frac{Y_1^{m_1}}{m_1!}\dots X_d^{k_d-1}\frac{Y_d^{m_d}}{m_d!}.
\end{align*}
We refer to the number $k_1+\dots+k_d+m_1+\dots+m_d$ as the weight of $G\binom{k_1,\ldots,k_d}{m_1,\ldots,m_d}$ and to the number $d$ as its depth. Moreover, the elements 
\[G(k_1,\dots,k_d) = \cmes{k_1,\ldots,k_d}{0,\ldots,0}, \qquad k_1,\ldots,k_d\geq1,\] 
are called the \emph{combinatorial multiple Eisenstein series}. 
\end{defi} 
\begin{ex} \label{CMES in depth 1,2}
In depth $1$, one obtains
\[\Gcmes{1}{X_1}{Y_1}=\mathfrak{b}_1\bi{X_1}{Y_1}+\sum_{u>0}L_u\binom{X_1}{Y_1}\]
and thus for $k\geq 1,\ m\geq 0$
\[\cmes{k}{m}=-\delta_{m,0}\frac{B_k}{2k!}-\delta_{k,1}\frac{B_{m+1}}{2(m+1)}+
\frac{1}{(k-1)!}\sum_{u,v>0}u^mv^{k-1}q^{uv}.\]
In particular, the combinatorial Eisenstein series $G(k)$, $k\geq 2$ even, are exactly the classical Eisenstein series of weight $k$ with rational coefficients (expressed in their Fourier expansion). The combinatorial bi-Eisenstein series $G\binom{k}{m}$, $k+m\geq 2$ even, is essentially the $m$-th derivative of the classical Eisenstein series $G(k)$ and hence is also contained in the algebra $\quasimod$ of quasi-modular forms with rational coefficients.
\end{ex} 
\begin{prop} \label{cmes spanning set} (\cite[Proposition 6.15.]{bb}) The combinatorial bi-multiple Eisenstein series form a spanning set of $\Zq$. \qed 
\end{prop}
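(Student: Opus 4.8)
The plan is to deduce this from the theorem of Bachmann and Kühn that $\Zq$ is spanned over $\QQ$ by Bachmann's bi-brackets $\bigl[\begin{smallmatrix} s_1,\dots,s_l\\ r_1,\dots,r_l\end{smallmatrix}\bigr]$ (with $s_i,r_i\geq 1$), equivalently by the generating series $\operatorname{Gen}_f(q)$ attached to monomial functions on partitions (see the remark following Definition~\ref{def Zq}, which rests on \cite[Theorem~2.3]{bk} and \cite{ba}). From there I would prove two inclusions: first that every combinatorial bi-multiple Eisenstein series lies in $\Zq$, and second that every bi-bracket is a $\QQ$-linear combination of combinatorial bi-multiple Eisenstein series.

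For the first inclusion I would unfold the constructions of Section~\ref{CMES} and observe that the only $q$-dependence of the bimould $\mathfrak{G}$ enters through the blocks $L_u\binom{X}{Y}=\sum_{v\geq 1}e^{vX+uY}q^{uv}$ sitting inside the $\mathfrak{L}^{(u)}$, while the moulds $\mathfrak{b},\widetilde{\mathfrak{b}}$ and the scalars $\gamma_i$ contribute only rational numbers. Expanding $\mathfrak{g}^\ast_d$ along its nested sum $u_1>\dots>u_j>0$ and reading off the coefficient of $X_1^{k_1-1}\tfrac{Y_1^{m_1}}{m_1!}\cdots X_d^{k_d-1}\tfrac{Y_d^{m_d}}{m_d!}$ expresses it as a $\QQ$-linear combination of $q$-series $\sum_{u_1>\dots>u_j>0}\sum_{v_1,\dots,v_j>0} u_1^{a_1}v_1^{b_1}\cdots u_j^{a_j}v_j^{b_j}\,q^{u_1v_1+\dots+u_jv_j}$ with $a_i,b_i\geq 0$, i.e.\ of rational multiples of bi-brackets of depth $\leq d$; passing to the mould product $\mathfrak{G}=\mathfrak{g}^\ast\cdot\mathfrak{b}$ only introduces further such terms and rational constants. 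Since bi-brackets lie in $\Zq$, every coefficient of $\mathfrak{G}$, in particular each $\cmes{k_1,\dots,k_d}{m_1,\dots,m_d}$, lies in $\Zq$.

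For the second inclusion I would run a triangularity argument in the depth, at fixed weight. Because $\mathfrak{L}^{(u)}_1=L_u$, the part of $\mathfrak{g}^\ast_d$ coming from $j=d$ (all blocks of size $1$) is exactly $\sum_{u_1>\dots>u_d>0}\prod_{i=1}^d L_{u_i}\binom{X_i}{Y_i}$, whose coefficient of $X_1^{k_1-1}\tfrac{Y_1^{m_1}}{m_1!}\cdots$ equals $\tfrac{m_1!\cdots m_d!}{(k_1-1)!\cdots(k_d-1)!}\bigl[\begin{smallmatrix} k_1,\dots,k_d\\ m_1+1,\dots,m_d+1\end{smallmatrix}\bigr]$, a nonzero rational multiple of a depth-$d$ bi-bracket. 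Every remaining contribution to $\cmes{k_1,\dots,k_d}{m_1,\dots,m_d}$ --- from coarser compositions in $\mathfrak{g}^\ast_d$, from the $\mathfrak{b}$- and $\widetilde{\mathfrak{b}}$-tails inside $\mathfrak{L}^{(u)}_n$ with $n\geq 2$, from the $\gamma_i$, and from the mould product with the rational-coefficient bimould $\mathfrak{b}$ --- involves strictly fewer nested $u$-summations, hence is a $\QQ$-linear combination of bi-brackets of depth $<d$ together with rational constants, and everything is weight-homogeneous of weight $k_1+\dots+k_d+m_1+\dots+m_d$. Using Example~\ref{CMES in depth 1,2} as the depth-$1$ base case and $1=\Gcmes{0}{}{}$ to absorb constants, an induction on depth at each fixed weight $w$ then places every bi-bracket of weight $w$ in $\spanQ\{\cmes{\,\cdot\,}{\,\cdot\,}\mid \text{weight}\leq w\}$, so $\Zq\subseteq\spanQ\{\cmes{\,\cdot\,}{\,\cdot\,}\}$; together with the first inclusion this gives the claim.

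The hard part will be the bookkeeping in the second step: one has to verify that \emph{all} the ``lower-order'' pieces of $\cmes{k_1,\dots,k_d}{m_1,\dots,m_d}$ --- and they come from several sources --- genuinely lie in the span of strictly lower-depth bi-brackets (equivalently, strictly lower-depth combinatorial bi-multiple Eisenstein series) plus constants, so that the triangular system is invertible; this is where the precise recursive shape of $\mathfrak{L}^{(u)}$ and of the normalisation $\gamma_i$ are used, and it is essentially the content of \cite[\S 6]{bb}. The other, more foundational, ingredient is the Bachmann--Kühn result that the bi-brackets span $\Zq$, which we take from \cite[Theorem~2.3]{bk} and \cite{ba}.
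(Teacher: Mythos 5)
The paper offers no proof of this proposition: it is imported verbatim from \cite[Proposition 6.15]{bb}, so the only available comparison is with that source, whose argument your proposal essentially reconstructs --- the combinatorial bi-multiple Eisenstein series lie in $\Zq$ because the only $q$-dependence of $\mathfrak{G}$ enters through the $L_u$ blocks, which expand into bi-brackets, and conversely each $\cmes{k_1,\ldots,k_d}{m_1,\ldots,m_d}$ is a nonzero rational multiple of the corresponding depth-$d$ bi-bracket plus lower-depth contributions, which together with the Bachmann--K\"uhn result that bi-brackets span $\Zq$ yields the claim by a triangularity argument in the depth. One small correction: the lower-depth pieces are not weight-homogeneous of weight $k_1+\dots+k_d+m_1+\dots+m_d$ as elements of $\Zq$, since the rational factors $\beta_\ast(\ldots)$, $\gamma_i$ and Bernoulli constants absorb weight (cf.\ Example \ref{CMES in depth 1,2}), so the induction should be run on depth alone rather than on depth within a fixed weight; this does not affect the conclusion.
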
 \noindent
Theorem \ref{G symmetril, swap invariant}, the definition of the symmetrility and swap (compare to Definition \ref{def symmetril}, \ref{def swap}), and Proposition \ref{cmes spanning set} imply the following.
\begin{cor} \label{quasish cmes} There is a surjective, swap invariant algebra morphism
\begin{align*}
G:(\QYbi,\ast)&\to\Zq, \\
y_{k_1,m_1}\dots y_{k_d,m_d}&\mapsto\cmes{k_1,\ldots,k_d}{m_1,\ldots,m_d}.
\end{align*}
\end{cor}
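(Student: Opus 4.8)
The plan is to take $G$ to be the coefficient map of the bimould $\mathfrak{G}$ of Definition~\ref{def cmes} and to read off the three asserted properties one at a time from the properties of $\mathfrak{G}$ recorded in Theorem~\ref{G symmetril, swap invariant} and from the spanning statement of Proposition~\ref{cmes spanning set}, translating between the bimould picture and the word-algebra picture as in Section~\ref{quasi-shuffle and gen series}.

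First I would produce the algebra morphism. By Theorem~\ref{G symmetril, swap invariant} the bimould $\mathfrak{G}$ is symmetril, so Definition~\ref{def symmetril} furnishes an algebra morphism $\varphi_\ast\colon(\QYbi,\ast)\to\QQ\llbracket q\rrbracket$ with $(\varphi_\ast\otimes\rho_{\Ybi})(\mathcal{W})=\mathfrak{G}$; that is, $\varphi_\ast(y_{k_1,m_1}\dots y_{k_d,m_d})$ is the coefficient of $X_1^{k_1-1}\tfrac{Y_1^{m_1}}{m_1!}\cdots X_d^{k_d-1}\tfrac{Y_d^{m_d}}{m_d!}$ in $\mathfrak{G}_d$. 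By Definition~\ref{def cmes} this coefficient is exactly $\cmes{k_1,\dots,k_d}{m_1,\dots,m_d}$, so setting $G:=\varphi_\ast$ gives precisely the map in the statement, and it is an algebra morphism for the stuffle product $\ast$. For surjectivity onto $\Zq$, note that the image of $G$ is the $\QQ$-linear span of all combinatorial bi-multiple Eisenstein series $\cmes{k_1,\dots,k_d}{m_1,\dots,m_d}$, which equals $\Zq$ by Proposition~\ref{cmes spanning set}; since $\Zq$ is a subalgebra of $\QQ\llbracket q\rrbracket$, we may restrict the codomain.

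It remains to check swap invariance, i.e.\ $G\circ\swap=G$. Here I would extend $G$ to $\QYbi\llbracket X_1,Y_1,X_2,Y_2,\dots\rrbracket$ coefficientwise and use that this extension commutes with substituting polynomials for the formal variables $X_i,Y_i$ — in particular with the substitution defining $\swap$ on bimoulds (Definition~\ref{def swap}), since that substitution acts on the monomial part of a generating series while $G$ acts on the word part. Applying $G$ to the defining identity $\swap(\GenYbi)=(\swap\otimes\rho_{\Ybi})(\mathcal{W})$ of the involution $\swap$ on $\QYbi$ then gives
\[
\sum_{w\in(\Ybi)^*} G(\swap(w))\,\rho_{\Ybi}(w)=G\bigl(\swap(\GenYbi)\bigr)=\swap\bigl((G\otimes\rho_{\Ybi})(\mathcal{W})\bigr)=\swap(\mathfrak{G}),
\]
which by Theorem~\ref{G symmetril, swap invariant} equals $\mathfrak{G}=(G\otimes\rho_{\Ybi})(\mathcal{W})=\sum_{w\in(\Ybi)^*}G(w)\,\rho_{\Ybi}(w)$. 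Comparing coefficients and using the injectivity of $\rho_{\Ybi}$ in each depth (Definition~\ref{def gen series of words}) yields $G(\swap(w))=G(w)$ for every word $w$.

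The argument is essentially bookkeeping between the two descriptions, so no serious obstacle is expected; the only point demanding care is the last step, namely keeping the two roles of $\swap$ apart — the involution on $\QYbi$ versus the variable substitution on bimoulds — and checking that the coefficient map $G$ intertwines them, which is exactly what transports the bimould identity $\swap(\mathfrak{G})=\mathfrak{G}$ down to $G\circ\swap=G$.
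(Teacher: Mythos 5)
Your proposal is correct and follows exactly the route the paper intends: the corollary is deduced from the symmetrility and swap invariance of $\mathfrak{G}$ (Theorem \ref{G symmetril, swap invariant}), the definitions of symmetrility and of $\swap$ on $\QYbi$ (Definitions \ref{def symmetril}, \ref{def swap}), and the spanning statement of Proposition \ref{cmes spanning set}, which are precisely the ingredients you use. Your careful unwinding of the swap invariance via the coefficient map and the injectivity of $\rho_{\Ybi}$ in each depth is just a detailed version of what the paper leaves implicit.
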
 
\begin{con} \label{all relations in Zq CMES} (\cite[Remark 6.11.]{bb})
All algebraic relations in $\Zq$ are a consequence of the stuffle product formula and the swap invariance of the combinatorial bi-multiple Eisenstein series.
\end{con}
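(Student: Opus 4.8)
The plan is to recast the conjecture as a single isomorphism statement, then split it into an ``upper bound'' part that is in principle combinatorial and a ``lower bound'' part that is a genuine transcendence problem and the real obstruction. By Corollary \ref{quasish cmes} the map $G\colon(\QYbi,\ast)\to\Zq$ is a surjective, swap-invariant algebra morphism; write $I\subseteq\QYbi$ for the $\ast$-ideal generated by the elements $w-\swap(w)$. Swap-invariance of $G$ already gives $I\subseteq\ker G$, so the content of the conjecture is exactly the reverse inclusion, i.e. that $G$ descends to an isomorphism $(\QYbi,\ast)/I\xrightarrow{\ \sim\ }\Zq$. Transporting through the Hopf-algebra isomorphism $\varphi_{\#}$ of Theorem \ref{Hopf iso Ybi B}, which intertwines $\swap$ with $\tau$, this is equivalent to Conjecture \ref{all relations in Zq balanced qMZV}: the balanced multiple $q$-zeta value morphism $(\QB^0,\bst)\to\Zq$ of Theorem \ref{balanced qMZV quasi-shuffle morphism} has kernel precisely the $\bst$-ideal generated by the $\tau$-invariance relations $w-\tau(w)$. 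I would work on the balanced side, where $\tau$ is the explicit word-reversal of Definition \ref{def tau} and the weight grading is manifest, and read the conclusion back through $\varphi_{\#}$.

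For the upper bound I would, for each weight $N$, produce an explicit $\QQ$-spanning set of the quotient of $(\QB^0,\bst)$ by the ideal generated by the relations $w=\tau(w)$, and count it, obtaining a number $\delta_N$. Concretely: use the freeness of the quasi-shuffle algebra on Lyndon words (Hoffman's structure theorem, Section \ref{quasi-shuffle algebras}) to get a polynomial presentation of $(\QB^0,\bst)$; impose $w=\tau(w)$ and reduce to a small set of algebra generators in each weight, in the spirit of the $\mathfrak{f}$-alphabet presentations used for multiple zeta values; and keep track of the subalgebra $\quasimod\subseteq\Zq$ of quasi-modular forms, whose Hilbert series is known, both as a sanity check and because its relations must then be visible as consequences of $\bst$ and $\tau$. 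This step is linear algebra over $\QQ$; although it may be intricate it can be carried out in principle, and its output is a candidate Hilbert series $\sum_N\delta_N t^N$ together with, by surjectivity, the bound $\dim_\QQ(\Zq)_N\leq\delta_N$.

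The hard part --- and in my view the only real obstacle --- is the matching lower bound: one must show that the images under $G$ (equivalently under the balanced morphism) of a $\QQ$-basis of the quotient are $\QQ$-linearly independent in $\QQ\llbracket q\rrbracket$, so that $\dim_\QQ(\Zq)_N=\delta_N$ for all $N$ (which in particular would force $\Zq$ to be graded for this weight). This is a transcendence-type independence statement for $q$-series, the exact analogue of Zagier's conjectural lower bound $\dim_\QQ\Z_N\geq d_N$ for multiple zeta values, whose open status is precisely why the extended double shuffle conjecture (\cite{ikz}) is still unproved; I do not expect to settle it in general. Partial progress is available --- the containment $\quasimod\subseteq\Zq$ and the known dimensions of spaces of quasi-modular forms pin down part of $(\Zq)_N$ --- but the full bound is out of reach. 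A realistic program is therefore: (i) verify $\dim_\QQ(\Zq)_N=\delta_N$ numerically up to some weight by comparing $q$-expansions to high order, which is how such statements are tested in practice and would at least give strong supporting evidence; and (ii) attempt a conditional proof via a ``motivic'' lift of the balanced multiple $q$-zeta values together with a freeness statement for an associated Lie coalgebra, from which the lower bound would follow. Absent such input, the honest endpoint is the inclusion $I\subseteq\ker G$ (immediate from Corollary \ref{quasish cmes}), an explicit conjectural Hilbert series for $\Zq$, and a clean reduction of the conjecture to the single linear-independence statement above.
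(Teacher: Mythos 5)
The statement you were asked to prove is a conjecture: the paper gives no proof of it (it is quoted from \cite[Remark 6.11]{bb} and left open), so there is no argument of the author's to compare yours against. Your treatment is the appropriate one and, as far as it goes, matches the paper's own stance: you correctly formalize the conjecture as the assertion that the kernel of the surjection $G\colon(\QYbi,\ast)\to\Zq$ of Corollary \ref{quasish cmes} equals the $\ast$-ideal generated by the elements $w-\swap(w)$ (the inclusion $I\subseteq\ker G$ being the only part that is actually known), and your transport of the problem through $\varphi_{\#}$ to the balanced side is exactly the paper's reformulation --- the author states Conjecture \ref{all relations in Zq balanced qMZV} as equivalent to Conjecture \ref{all relations in Zq CMES} via Theorem \ref{Hopf iso Ybi B}, using $\varphi_{\#}\circ\swap=\tau\circ\varphi_{\#}$ just as you do. The rest of your text (computing a candidate Hilbert series of the quotient $(\QB^0,\bst)/(w-\tau(w))$, numerical verification, a hoped-for motivic or Lie-coalgebra input for the lower bound) is a research program rather than a proof, and you say so explicitly; you are right that the genuine obstruction is the linear-independence lower bound for $q$-series, the analogue of Zagier's dimension conjecture, which neither you nor the paper can supply. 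The one thing to keep in mind if you pursue the upper-bound step is that the quotient by the $\tau$-ideal need not be a quasi-shuffle algebra in Hoffman's sense (since $\tau$ is not an algebra morphism for $\bst$), so the Lyndon-word freeness of $(\QB^0,\bst)$ gives generators of the quotient but not automatically a clean polynomial presentation; the counting $\delta_N$ is still finite weight by weight, but extracting it is more than routine bookkeeping.
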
 \noindent
If Conjecture \ref{all relations in Zq CMES} holds, then the algebra $\Zq$ is graded by weight
\begin{align} \label{grading CMES}
\Zq=\bigoplus_{w\geq0} \Zq^{(w)}.
\end{align}
Here $\Zq^{(w)}$ denotes the subspace of $\Zq$ spanned by all combinatorial bi-multiple Eisenstein series of weight $w$. This follows immediately from the observation that the stuffle product and also the swap operator are homogeneous in weight.

\section{Balanced multiple $\operatorname{q}$-zeta values} \label{balanced qMZV}

\noindent
We will apply the results from Section \ref{Comparison Ybi B} to obtain a new spanning set of the algebra $\Zq$. They will satisfy very explicit and simple relations, which are homogeneous in weight. Thus, this new spanning set should induce a weight-grading on $\Zq$. 
\begin{defi} \label{def balanced qMZV} Define the bimould $\mathfrak{B}=(\mathfrak{B}_d)_{d\geq0}$ with coefficients in $\Zq$ by $\mathfrak{B}_0=1$ and for $d\geq 1$ by 
\[\mathfrak{B}_d\bi{X_1,\ldots,X_d}{Y_1,\ldots,Y_d}=\mathfrak{G}^{\#_Y^{-1}}_d\bi{X_1,\ldots,X_d}{Y_1,\ldots,Y_d},\] 
where $\mathfrak{G}$ is the bimould of generating series of the combinatorial bi-multiple Eisenstein series (Definition \ref{def G}). The \emph{balanced multiple q-zeta values} $\zq(s_1,\ldots,s_l)$, $s_1\geq1$, $s_2,\ldots,s_l\geq0$, are the coefficients of the bimoulds $\mathfrak{B}$,
\begin{align*}
\Gzq{d}{X_1,\ldots,X_d}{Y_1,\ldots,Y_d}=\sum_{\substack{k_1,\dots,k_d \geq 1 \\ m_1,\dots,m_d\geq 0}} \zq(k_1,\{0\}^{m_1},\ldots,k_d,\{0\}^{m_d})X_1^{k_1-1}Y_1^{m_1}\dots X_d^{k_d-1}Y_d^{m_d}, \quad d\geq1.
\end{align*}
Equivalently, the balanced multiple q-zeta values can be defined as
\[\zq(s_1,\ldots,s_l)=G(\varphi_{\#}^{-1}(b_{s_1}\dots b_{s_l})),\]
where $G:\QYbi\to \Zq$ denotes the algebra morphism induced by the combinatorial bi-multiple Eisenstein series given in Corollary \ref{quasish cmes}.
\end{defi} 
\begin{rem} The definition of the balanced multiple q-zeta values has many similarities with Zudilin's definition of the multiple q-zeta brackets in terms of the bi-brackets (\cite[eq (8)]{zu}). 
\end{rem} \noindent
Since the bimould $\mathfrak{G}$ of generating series of the combinatorial bi-multiple Eisenstein series is symmetril and swap invariant (Theorem \ref{G symmetril, swap invariant}), an immediate consequence of Corollary \ref{symmetril and b-symmetril} and \ref{swap invariant and tau invariant} is the following.
\begin{prop} The bimould $\mathfrak{B}$ is b-symmetril and $\tau$-invariant.
\end{prop}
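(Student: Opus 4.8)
The plan is to deduce the statement directly from the already-established properties of the bimould $\mathfrak{G}$ together with the comparison results of Section~\ref{Comparison Ybi B}. The only computational point is the elementary observation that the operations $M\mapsto M^{\#_Y}$ and $M\mapsto M^{\#_Y^{-1}}$ on bimoulds are mutually inverse; hence Definition~\ref{def balanced qMZV} is equivalent to
\[
\mathfrak{B}^{\#_Y}=\mathfrak{G}.
\]
This is immediate from the explicit formulas for $\#_Y$ and its inverse, and I would simply record it before quoting the corollaries.

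With this identity in hand, I would invoke Theorem~\ref{G symmetril, swap invariant}, which asserts that $\mathfrak{G}$ is symmetril and swap invariant; consequently $\mathfrak{B}^{\#_Y}$ is both symmetril and swap invariant. Now apply the two comparison corollaries to $M=\mathfrak{B}$. By Corollary~\ref{symmetril and b-symmetril}, $\mathfrak{B}$ is b-symmetril if and only if $\mathfrak{B}^{\#_Y}=\mathfrak{G}$ is symmetril; since the latter holds (with coefficient map $G$ of Corollary~\ref{quasish cmes}), $\mathfrak{B}$ is b-symmetril, its coefficient map being $\varphi_{\bst}=G\circ\varphi_{\#}^{-1}$. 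Similarly, by Corollary~\ref{swap invariant and tau invariant}, $\mathfrak{B}$ is $\tau$-invariant if and only if $\mathfrak{B}^{\#_Y}=\mathfrak{G}$ is swap invariant, which again holds by Theorem~\ref{G symmetril, swap invariant}.

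I do not expect any genuine obstacle: all the real work has been front-loaded, on one side into the construction of $\mathfrak{G}$ in \cite{bb} and Theorem~\ref{G symmetril, swap invariant}, and on the other into Theorem~\ref{Hopf iso Ybi B} (equivalently its corollaries), which shows that the twist $M\mapsto M^{\#_Y}$ intertwines b-symmetrility with symmetrility and $\tau$-invariance with swap invariance. The only thing to watch is the direction of the $\#_Y$-twist in Definition~\ref{def balanced qMZV}, so that the corollaries are applied to $\mathfrak{B}$ and not to $\mathfrak{G}$; stating $\mathfrak{B}^{\#_Y}=\mathfrak{G}$ explicitly makes this transparent.
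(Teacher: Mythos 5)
Your argument is exactly the paper's: the Proposition is stated there as an immediate consequence of Theorem \ref{G symmetril, swap invariant} together with Corollaries \ref{symmetril and b-symmetril} and \ref{swap invariant and tau invariant}, applied after observing that Definition \ref{def balanced qMZV} amounts to $\mathfrak{B}^{\#_Y}=\mathfrak{G}$. Your write-up is correct, and making the identity $\mathfrak{B}^{\#_Y}=\mathfrak{G}$ and the direction of the twist explicit is a sensible (if minor) clarification.
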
 \noindent
These symmetries of the bimould $\mathfrak{B}$ yield the following properties of balanced multiple q-zeta values.
\begin{thm} \label{balanced qMZV quasi-shuffle morphism} There is a $\tau$-invariant, surjective algebra morphism
\begin{align*}
\zq:(\QB^0,\bst)&\to\Zq, \\
b_{s_1}\dots b_{s_l}&\mapsto\zq(s_1,\ldots,s_l).
\end{align*}
\end{thm}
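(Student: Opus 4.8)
The plan is to exhibit $\zq$ as the composition $G\circ\varphi_{\#}^{-1}$ and read off all three asserted properties from Theorem~\ref{Hopf iso Ybi B} together with Corollary~\ref{quasish cmes}; equivalently, the same facts can be phrased through the bimould $\mathfrak{B}$, using that it is b-symmetril and $\tau$-invariant (the preceding Proposition), the passage between the two viewpoints being exactly Corollaries~\ref{symmetril and b-symmetril} and~\ref{swap invariant and tau invariant}. By Definition~\ref{def balanced qMZV} one has $\zq(s_1,\ldots,s_l)=G\big(\varphi_{\#}^{-1}(b_{s_1}\cdots b_{s_l})\big)$, so the map in the statement is precisely $G\circ\varphi_{\#}^{-1}\colon\QB^0\to\Zq$, which is well defined because $\{b_{s_1}\cdots b_{s_l}\mid s_1\geq1,\ s_2,\ldots,s_l\geq0\}$ is a basis of $\QB^0$. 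Since $\varphi_{\#}^{-1}\colon(\QB^0,\bst)\to(\QYbi,\ast)$ is an algebra isomorphism (Theorem~\ref{varphi sharp alg hom}, or Theorem~\ref{Hopf iso Ybi B}) and $G\colon(\QYbi,\ast)\to\Zq$ is an algebra morphism (Corollary~\ref{quasish cmes}), the composition is an algebra morphism for $\bst$. From the bimould side this is encoded in the b-symmetrility of $\mathfrak{B}$: unwinding Definition~\ref{def b-symmetril} produces an algebra morphism $(\QB^0,\bst)\to\Zq$ sending $b_{k_1}b_0^{m_1}\cdots b_{k_d}b_0^{m_d}$ to the coefficient of $X_1^{k_1-1}Y_1^{m_1}\cdots X_d^{k_d-1}Y_d^{m_d}$ in $\mathfrak{B}_d$, i.e.\ to $\zq(k_1,\{0\}^{m_1},\ldots,k_d,\{0\}^{m_d})$; this coefficient map is unique by the injectivity clause of Definition~\ref{def gen series of words}, hence it agrees with $\zq$.

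Surjectivity is immediate from $\zq=G\circ\varphi_{\#}^{-1}$, as $G$ is surjective (Corollary~\ref{quasish cmes}) and $\varphi_{\#}^{-1}$ is bijective; alternatively, $\mathfrak{B}=\mathfrak{G}^{\#_Y^{-1}}$ and the invertible change of variables $\#_Y^{-1}$ leaves the $\QQ$-span of the coefficients unchanged, which is $\Zq$ by Proposition~\ref{cmes spanning set}. For $\tau$-invariance, Theorem~\ref{Hopf iso Ybi B} provides $\varphi_{\#}\circ\swap=\tau\circ\varphi_{\#}$, hence $\varphi_{\#}^{-1}\circ\tau=\swap\circ\varphi_{\#}^{-1}$, and combining with the swap invariance of $G$ from Corollary~\ref{quasish cmes}, i.e.\ $G\circ\swap=G$, yields
\[
\zq\circ\tau=G\circ\varphi_{\#}^{-1}\circ\tau=G\circ\swap\circ\varphi_{\#}^{-1}=G\circ\varphi_{\#}^{-1}=\zq .
\]
On the bimould side this is the $\tau$-invariance of $\mathfrak{B}$ read through Corollary~\ref{swap invariant and tau invariant}, using that for a bimould $M$ with coefficient map $\varphi$ the coefficient map of $\tau(M)$ (Definition~\ref{tau-invariant bimould}) is $\varphi\circ\tau$, which is just the index relabelling defining $\tau$ in Definition~\ref{def tau}.

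There is no serious obstacle: the statement is essentially a corollary of Theorem~\ref{Hopf iso Ybi B} and Theorem~\ref{G symmetril, swap invariant}. The only points that require a moment's care are the dictionary between bimould symmetries and linear maps on $\QB^0$ — uniqueness of the coefficient map of a bimould (resting on the injectivity in Definition~\ref{def gen series of words}) and the matching of the $\tau$-action on bimoulds with $\tau$ on $\QB^0$ under this dictionary — together with, for surjectivity, the observation that the $\#_Y$-transformation preserves $\QQ$-spans of coefficients.
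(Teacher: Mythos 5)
Your proposal is correct and follows essentially the same route as the paper: the paper deduces the theorem immediately from the b-symmetrility and $\tau$-invariance of $\mathfrak{B}$ together with the bijectivity of $\#_Y$, which is exactly the content you unwind (your word-level composition $\zq=G\circ\varphi_{\#}^{-1}$ via Theorem \ref{varphi sharp alg hom}/\ref{Hopf iso Ybi B} and Corollary \ref{quasish cmes} is just the explicit form of that same argument, as the equivalent description in Definition \ref{def balanced qMZV} already indicates). Your additional care about uniqueness of coefficient maps and the identification of the coefficient map of $\tau(M)$ with $\varphi\circ\tau$ is accurate and merely makes explicit what the paper treats as immediate.
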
 
\begin{proof}
This is an immediate consequence of the b-symmetrility and $\tau$-invariance of the bimould $\mathfrak{B}$ (see Definition \ref{def b-symmetril}, \ref{tau-invariant bimould}) together with the observation that $\#_Y$ is a bijection.
\end{proof} \noindent 
As a reformulation of Conjecture \ref{all relations in Zq CMES}, we expect the following.
\begin{con} \label{all relations in Zq balanced qMZV} All algebraic relations in $\Zq$ are a consequence of the balanced quasi-shuffle product formula and the $\tau$-invariance of the balanced multiple q-zeta values.
\end{con}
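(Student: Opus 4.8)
The statement is a conjecture, introduced explicitly as a reformulation of Conjecture \ref{all relations in Zq CMES}; accordingly, the content one can actually establish is the \emph{equivalence} of the two conjectures. The plan is to phrase both ``all relations'' assertions as statements about the kernels of surjective algebra morphisms onto $\Zq$, and then to transport them along the isomorphism $\varphi_{\#}$ of Theorem \ref{Hopf iso Ybi B}.

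First I would make both assertions precise. Since $G:(\QYbi,\ast)\to\Zq$ is a surjective algebra morphism (Corollary \ref{quasish cmes}) and $\ast$ is commutative, $\ker G$ is an ideal of $(\QYbi,\ast)$ encoding all algebraic relations among the combinatorial bi-multiple Eisenstein series. Swap-invariance of $G$ gives $w-\swap(w)\in\ker G$ for every word $w$, so the $\ast$-ideal $J_{\swap}$ generated by all $w-\swap(w)$ satisfies $J_{\swap}\subseteq\ker G$; Conjecture \ref{all relations in Zq CMES} is the assertion $\ker G=J_{\swap}$. Analogously, $\zq:(\QB^0,\bst)\to\Zq$ is a surjective algebra morphism (Theorem \ref{balanced qMZV quasi-shuffle morphism}) for the commutative product $\bst$, so $\ker\zq$ is a $\bst$-ideal; the $\tau$-invariance yields $v-\tau(v)\in\ker\zq$, and Conjecture \ref{all relations in Zq balanced qMZV} is the assertion $\ker\zq=J_{\tau}$, where $J_{\tau}$ is the $\bst$-ideal generated by all $v-\tau(v)$.

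Next I would transport these along $\varphi_{\#}$. By Definition \ref{def balanced qMZV} one has $\zq=G\circ\varphi_{\#}^{-1}$, and $\varphi_{\#}$ is an isomorphism of algebras $(\QYbi,\ast)\to(\QB^0,\bst)$ (Theorem \ref{varphi sharp alg hom}); hence $\ker\zq=\varphi_{\#}(\ker G)$ and $\varphi_{\#}$ restricts to an inclusion-preserving bijection between the $\ast$-ideals of $\QYbi$ and the $\bst$-ideals of $\QB^0$. Using the intertwining relation $\varphi_{\#}\circ\swap=\tau\circ\varphi_{\#}$ of Theorem \ref{Hopf iso Ybi B}, one obtains
\[\varphi_{\#}\big(w-\swap(w)\big)=\varphi_{\#}(w)-\tau\big(\varphi_{\#}(w)\big),\]
so, as $w$ ranges over the words of $\QYbi$ and $\varphi_{\#}(w)$ correspondingly spans $\QB^0$, the generators of $J_{\swap}$ are carried exactly onto a generating set of $J_{\tau}$. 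Since $\varphi_{\#}$ is an algebra isomorphism, this gives $\varphi_{\#}(J_{\swap})=J_{\tau}$. Combining $\varphi_{\#}(\ker G)=\ker\zq$ with $\varphi_{\#}(J_{\swap})=J_{\tau}$ and the bijectivity of $\varphi_{\#}$, the equality $\ker G=J_{\swap}$ holds if and only if $\ker\zq=J_{\tau}$; this is the desired equivalence of the two conjectures.

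The main obstacle is conceptual rather than computational: one must fix the precise meaning of ``all algebraic relations are a consequence of the product formula and the involution.'' The clean reading, which makes the transport work, is that the product formula is not an additional relation but the ambient multiplicative structure — one works inside the $\ast$- respectively $\bst$-algebra — so that the only generating relations to be transported are the involution-invariances, while the product structure is preserved automatically because $\varphi_{\#}$ intertwines $\ast$ and $\bst$. With this interpretation every step is forced by Theorem \ref{Hopf iso Ybi B}; what remains genuinely open, namely the truth of either conjecture, lies outside the scope of this reformulation.
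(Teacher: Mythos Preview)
The paper does not supply a proof here: the statement is a conjecture, and the paper simply introduces it with the phrase ``As a reformulation of Conjecture \ref{all relations in Zq CMES}, we expect the following,'' leaving the equivalence implicit. Your proposal correctly recognises this and then carries out precisely the justification the paper omits, namely that the two conjectures are equivalent via the isomorphism $\varphi_{\#}$ of Theorem \ref{Hopf iso Ybi B}.

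Your argument is sound. The identification $\zq=G\circ\varphi_{\#}^{-1}$ comes directly from Definition \ref{def balanced qMZV}, whence $\ker\zq=\varphi_{\#}(\ker G)$; the intertwining $\varphi_{\#}\circ\swap=\tau\circ\varphi_{\#}$ transports the generators $w-\swap(w)$ of $J_{\swap}$ to elements $\varphi_{\#}(w)-\tau(\varphi_{\#}(w))$, and since $\varphi_{\#}$ is a linear bijection these span the same subspace as the $v-\tau(v)$ for $v\in\QB^0$, giving $\varphi_{\#}(J_{\swap})=J_{\tau}$. Your closing remark about the interpretive choice---that the product formula is the ambient structure rather than an extra relation---is exactly the reading the paper intends, and it is what makes the transport clean. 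In short, you have supplied the argument the paper leaves to the reader; there is nothing to correct.
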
 \noindent
Observe that the balanced quasi-shuffle product, as well as the $\tau$-invariance, are both completely explicit on the level of words of multi indices and easy to compute. If Conjecture \ref{all relations in Zq balanced qMZV} holds, then the algebra $\Zq$ is graded by weight
\[\Zq=\bigoplus_{w\geq0} \Zq^{(w)},\]
where $\Zq^{(w)}$ is the subspace of $\Zq$ spanned by all balanced multiple q-zeta values of weight $w$. This conjectural grading coincides with the one of the combinatorial bi-multiple Eisenstein series given in \eqref{grading CMES}. \\
It was conjectured in \cite[Conjecture 4.3]{ba} in a slightly different setup that the space $\Zq$ is spanned by the elements $\zq(k_1,\ldots,k_d)$, $k_1,\ldots,k_d\geq1$. Some partial results towards this conjecture are obtained in \cite[Section 6]{bu}, \cite[Proposition 4.4, 5.9]{ba} and \cite[Theorem 5.3]{vl} and a general proof is announced in \cite{bbi}.
\begin{ex} 
1. In depth $1$ the balanced multiple q-zeta values coincide with the combinatorial bi-multiple Eisenstein series up to multiplication with certain factorials. Thus, we deduce from Example \ref{CMES in depth 1,2} that for all $k\geq1,\ m\geq0$
\[\zq(k,\{0\}^m)=-\delta_{m,0}\frac{B_k}{2k!}-\delta_{k,1}\frac{B_{m+1}}{2(m+1)!}+
\frac{1}{(k-1)!m!}\sum_{u,v>0}u^mv^{k-1}q^{uv}.\]
So, the element $\zq(k,\{0\}^m)$ is essentially equal to the $m$-th derivative of the Eisenstein series $G_k$. If $k_1,\ldots,k_d\geq1$, then the balanced multiple q-zeta value $\zq(k_1,\ldots,k_d)$ equals the combinatorial multiple Eisenstein series $G(k_1,\ldots,k_d)$. In particular, the balanced multiple q-zeta values give a very natural extension and explicit description of conjectural all relations between multiple Eisenstein series.
\vspace{0,2cm} \\
2. Recall that the mould $\mathfrak{b}$ in depth $2$ is given by $\mathfrak{b}(X_1,X_2)=\sum_{k_1,k_2\geq1} \beta_\ast(k_1,k_2) X_1^{k_1-1}X_2^{k_2-1}$ (compare to \eqref{def mould b}). Direct calculations show that
\begin{align*}
\zq(2,3)&=\beta_\ast(2,3)-\frac{1}{48}\sum_{u,v>0} v^2q^{uv}+\frac{1}{2}\sum_{\substack{u_1>u_2>0 \\ v_1,v_2>0}} v_1v_2^2q^{u_1v_1+u_2v_2}, \\
\zeta(2,0,3)&=\frac{1}{2}\sum_{\substack{u_1>u_2>0 \\ v_1,v_2>0}} u_1v_1v_2^2q^{u_1v_1+u_2v_2}-\frac{1}{2}\sum_{\substack{u_1>u_2>0 \\ v_1,v_2>0}} u_2v_1v_2^2q^{u_1v_1+u_2v_2}.
\end{align*}
An explicit construction for the numbers $\beta_\ast(k_1,k_2)$ is given in \cite[Section 6]{gkz}, in this case one obtains $\beta_\ast(2,3)=0$. Further constructions for these rational numbers are given in \cite{bro}, \cite{ec-mzv}.
\vspace{0,2cm} \\
3. The quasi-modular forms are contained in the algebra $\Zq$. In particular, the modular discriminant $\Delta(q)=q\prod_{n\geq1}(1-q^n)^{24}$ is a $\QQ$-linear combination of balanced multiple q-zeta values. For example, using the exotic relation given in \cite[Example 2.48 (ii)]{co} we obtain
\begin{align*}
\frac{1}{43200}\Delta(q)= &\ 240\zq(4,4,4)-63\zq(9,3)+183\zq(8,4)-\frac{675}{2}\zq(7,5)+\frac{89}{2}\zq(6,6)-378\zq(5,7)\\
&+183\zq(4,8).
\end{align*}
\end{ex} 

\section{Further properties of the balanced multiple $\operatorname{q}$-zeta values} \label{further properties balanced qMZV}

\noindent
Similar to the case of the Schlesinger-Zudilin multiple q-zeta values studied by K. Ebrahimi-Fard, D. Manchon, and J. Singer (\cite{si}, \cite{ems}), the balanced multiple q-zeta values possess a description in terms of an alphabet with two letters. We will compute the limits $q\to1$ of a certain kind of balanced multiple q-zeta values and obtain that those are elements in the algebra of multiple zeta values. Finally, we will describe the derivation $q\frac{\diff}{\diff q}$ on the algebra $\Zq$ in terms of the balanced multiple q-zeta values.
\begin{defi} \label{qsh on Q(p,y)} Let $\bsh$ be the product on the non-commutative free algebra $\Qnca{p,y}$ recursively defined by $\1\bsh w=w \bsh \1=w$ and 
\begin{align*}
(yu)\bsh v&=u\bsh (yv)=y(u\bsh v), \\
(pu)\bsh(pv)&=p(u\bsh pv)+p(pu\bsh v)+\begin{cases} p(u\bsh v), &\text{ if } u=y\tilde{u} \text{ and } v=y\tilde{v}, \\ 0 &\text{ else}\end{cases} \nonumber
\end{align*}
for all $u,v,w\in \Qnca{p,y}$.  
\end{defi} \noindent
The involution $\tau$ (Definition \ref{def tau}) can be also defined on the algebra $\Qnca{p,y}$.
\begin{defi}  \label{def tau on Q(p,y)} Let $\tau$ be the anti-automorphism on $\Qnca{p,y}$ given by $\tau(\1)=\1$, $\tau(p)=y$, and $\tau(y)=p$, i.e., one has for all $k_1,\ldots,k_d\geq 1,\ m_0,\ldots,m_d\geq 0$
\begin{align*}
\tau(y^{m_0}p^{k_1}y^{m_1}\dots p^{k_d}y^{m_d})=p^{m_d}y^{k_d}\dots p^{m_1}y^{k_1}p^{m_0}.\end{align*}
\end{defi} \noindent
The involution $\tau$ connects the balanced quasi-shuffle product $\bst$ and the product $\bsh$. To make this precise, consider the canonical embedding
\begin{align*} 
i:\QB&\hookrightarrow \Qnca{p,y}, \\
b_{s_1}\dots b_{s_l}&\mapsto p^{s_1}y\dots p^{s_l}y. \nonumber
\end{align*} 
Observe that we have $\tau(i(w))=i(\tau(w))$ for all $w\in \QB^0$.
\begin{prop} \label{tau relates graded qsh and qst} For all $u,v\in \QB$, we have
\[i(u\bst v)=\tau(\tau\circ i(u)\bsh \tau\circ i(v)).\]
\end{prop}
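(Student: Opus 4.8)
The plan is to reduce the identity $i(u\bst v)=\tau(\tau\circ i(u)\bsh \tau\circ i(v))$ to a statement purely about quasi-shuffle products and an anti-automorphism, and to prove it by induction on the total weight (equivalently the total length as words in $p,y$). First I would record the structural observation that $i:\QB\hookrightarrow\Qnca{p,y}$ identifies $\QB$ with the subspace spanned by words of the form $p^{s_1}y\cdots p^{s_l}y$, i.e. words ending in $y$ with no two consecutive $y$'s forbidden but with every maximal $y$-block of length one; more precisely $i(\QB)=\Qnca{p,y}\cdot y\cap(\text{words whose }y\text{-blocks all have length }1)$. Since $\tau$ swaps $p\leftrightarrow y$ and reverses words, $\tau\circ i(\QB)$ is the subspace of words \emph{starting} with $y$ whose $p$-blocks all have length $1$, i.e. words of the form $y^{m_d}p\,y^{m_{d-1}}p\cdots$. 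The key point is that the product $\bsh$, \emph{restricted to this subspace}, is well-defined and closed: in the recursion of Definition~\ref{qsh on Q(p,y)} the letter $p$ is never iterated (the third summand still produces a single $p$ followed by a shuffle of the tails $u,v$ which both begin with $y$), while the $y$'s shuffle freely — so shuffling two words with only isolated $p$'s yields again words with only isolated $p$'s.

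Next I would make the correspondence explicit. Under $\tau\circ i$, a word $b_{s_1}\cdots b_{s_l}\in\QB$ with say $s$-values $s_1,\dots,s_l$ maps to $y$-blocks dictated by the positions of the letters $b_0$ (an isolated $b_0$ contributing an extra $y$) and single $p$'s separating them. Tracking through, one sees that the three cases in the recursion for $\bst$ (namely $b_iu\bst b_jv = b_i(\cdots)+b_j(\cdots)+[i,j\geq1]\,b_{i+j}(\cdots)$) correspond exactly, after applying $\tau\circ i$, to the three cases in the recursion for $\bsh$: freely shuffling the leading $y$'s corresponds to the part of $\bst$ that builds up a $b_0$ (the ``else'' contributions and the $b_0$-prefixes), while the $p\bsh p$ step with its conditional extra-$p$ term corresponds to the $b_i,b_j$ with $i,j\geq1$ meeting and either staying separate or merging into $b_{i+j}$. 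I would therefore set $w\mapsto \tau\circ i(w)$ and prove by induction on weight that $\tau\circ i(u\bst v)=\tau\circ i(u)\bsh\tau\circ i(v)$ for all $u,v\in\QB$; since $\tau$ is an involution and $\tau\circ i = (\text{reverse})\circ(\text{swap }p,y)\circ i$, applying $\tau$ to both sides and using $\tau\circ i = i\circ\tau$ on $\QB^0$ (stated in the excerpt just before the proposition) — or rather the word-level fact $\tau(i(w))=i(\tau(w))$, which extends the stated identity from $\QB^0$ to all of $\QB$ by the same formula — rearranges this into the claimed form $i(u\bst v)=\tau(\tau\circ i(u)\bsh\tau\circ i(v))$.

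For the inductive step I would peel off the first letter of each of $u=b_iu'$ and $v=b_jv'$ in $\QB$, apply the recursive definition of $\bst$, apply $i$ and then $\tau$, and on the other side peel off the leading $y$-block of $\tau\circ i(u)$ and $\tau\circ i(v)$ and apply the recursion for $\bsh$; the commutativity and associativity supplied by Hoffman's theorem (Theorem in Section~\ref{quasi-shuffle algebras}) let me reduce everything to the depth-one-step comparison, and the finitely many base cases ($u$ or $v$ equal to $\1$, or to a single letter) are immediate. The main obstacle I anticipate is purely bookkeeping: aligning the combinatorics of ``an isolated $b_0$ in $\QB$'' with ``an extra $y$ in a $y$-block of $\Qnca{p,y}$'' and making sure the conditional clauses ($i,j\geq1$ on one side, ``$u=y\tilde u$ and $v=y\tilde v$'' on the other) match up term by term through the $\tau\circ i$ dictionary — in particular verifying that when a $b_0$ sits at the front of a word (so the word is \emph{not} in $\QB^0$) the recursion for $\bsh$ still produces the right thing, which is exactly why the proposition is stated for all $u,v\in\QB$ rather than just $\QB^0$. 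Once the dictionary between the two recursions is set up cleanly, the induction is routine.
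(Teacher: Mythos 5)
Your high-level plan --- prove that $\tau\circ i$ turns $\bst$ into $\bsh$ by induction on the length, then apply the involution $\tau$ of $\Qnca{p,y}$ --- is the same as the paper's, but the dictionary you set up is wrong in a way that would derail the inductive step as you describe it. Since $\tau$ is an \emph{anti}-automorphism, $\tau\circ i(b_{s_1}\cdots b_{s_l})=py^{s_l}\cdots py^{s_1}$: these words \emph{begin with} $p$ (the image lies in $\QQ\1+p\Qnca{p,y}$, as the paper notes right after the proposition), the $y$-runs record the entries $s_j\geq 1$ in \emph{reversed} order, and each letter $b_0$ contributes a bare $p$, so consecutive $p$'s do occur. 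Your description of the image as words starting with $y$ whose $p$-blocks have length one is therefore incorrect, and, more seriously, your alignment of the two recursions is backwards: peeling off the \emph{first} letters $b_i,b_j$ of $u,v$ corresponds under $\tau\circ i$ to the \emph{trailing} blocks $py^{i},py^{j}$ of $\tau\circ i(u),\tau\circ i(v)$, not to their leading blocks, whereas $\bsh$ is defined by a left recursion. The term-by-term matching you envisage does not exist in that form. The repair is exactly what the paper does: recurse on $\bst$ from the \emph{right} (legitimate, since a quasi-shuffle product can equally be defined recursively from either side, as remarked after Definition \ref{def quasi-shuffle}), peeling off the last letters $b_{s_l},b_{r_k}$; this lines up with the left recursion for $\bsh$ applied to $py^{s_l}\cdots$ and $py^{r_k}\cdots$, with two cases: $s_l,r_k\geq1$, where the conditional summand together with the $y$-rule produces $py^{s_l+r_k}$, i.e.\ the merged letter $b_{s_l+r_k}$, and $s_l=0$ (or $r_k=0$), where neither the conditional summand in $\bsh$ nor the merging term in $\bst$ appears.

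Two further slips. First, the identity $\tau\circ i=i\circ\tau$ does not extend to all of $\QB$: the involution $\tau$ of Definition \ref{def tau} is only defined on $\QB^0$, and for instance $\tau(i(b_0))=p$ does not even lie in $i(\QB)$. You do not need any such extension --- once you know $\tau(i(u\bst v))=\tau(i(u))\bsh\tau(i(v))$, applying the involution $\tau$ of $\Qnca{p,y}$ to both sides already gives the claim. Second, $\bsh$ is not a priori one of Hoffman's quasi-shuffle products on the alphabet $\{p,y\}$ (the $y$-rule is not of quasi-shuffle type), so you cannot simply quote the structure theorem recalled in Section \ref{quasi-shuffle algebras} for its commutativity and associativity; the paper's computation never uses these properties of $\bsh$, only its defining recursion.
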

\begin{proof} Let $u=b_{s_1}\dots b_{s_l}$ and $v=b_{r_1}\dots b_{r_k}$ be words in $\QB$ and $s_l,r_k\geq 1$. Using the recursive definition of $\bst$ from the right, we obtain 
\begin{align*}
i(u\bst v)&=i(b_{s_1}\dots b_{s_{l-1}}\bst b_{r_1}\dots b_{r_k})p^{s_l}y+i(b_{s_1}\dots b_{s_l}\bst b_{r_1}\dots b_{r_{k-1}})p^{r_k}y\\
& \hspace{0,35cm}+i(b_{s_2}\dots b_{s_l}\bst b_{r_2}\dots b_{r_k})p^{s_l+r_k}y.
\end{align*} 
On the other hand, applying the definition of $\tau$ and $i$ gives \\
\scalebox{0.99}{\parbox{.5\linewidth}{%
\begin{align*}
&\tau(\tau\circ i(u)\bsh \tau\circ i(v))=\tau(py^{s_l}\dots py^{s_1} \bsh py^{r_k}\dots py^{r_1}) \\
&=\tau\Big(py^{s_l}(py^{s_{l-1}}\dots py^{s_1}\bsh py^{r_k}\dots py^{r_1})+py^{r_k}(py^{s_l}\dots py^{s_1}\bsh py^{r_{k-1}}\dots py^{r_1})\\
&\hspace{0,35cm}+py^{s_l+r_k}(py^{s_{l-1}}\dots py^{s_1}\bsh py^{r_{k-1}}\dots py^{r_1})\Big) \\
&=\tau\Big(\tau\circ i(b_{s_1}\dots b_{s_{l-1}})\bsh \tau \circ i(b_{r_1}\dots b_{r_k})\Big)p^{s_l}y+\tau\Big(\tau\circ i(b_{s_1}\dots b_{s_l})\bsh \tau\circ i(b_{r_1}\dots b_{r_{k-1}})\Big) p^{r_k}y \\
&\hspace{0,35cm} +\tau\Big(\tau\circ i(b_{s_1}\dots b_{s_{l-1}})\bsh \tau \circ i(b_{r_1}\dots b_{r_{k-1}})\Big)p^{s_l+r_k}y.
\end{align*} }} \\
So induction on the depth implies the claim. Next, assume that $s_l=0$ and $r_k\geq 0$. Then we obtain 
\begin{align*}
i(u\bst v)&=i(b_{s_1}\dots b_{s_{l-1}}\bst b_{r_1}\dots b_{r_k})y+i(b_{s_1}\dots b_{s_l}\bst b_{r_1}\dots b_{r_{k-1}})p^{r_k}y,
\end{align*}
and on the other hand, using again the definition of $\tau$ and $i$
\begin{align*}
&\tau(\tau\circ i(u)\bsh \tau\circ i(v))=\tau(py^{s_l}\dots py^{s_1} \bsh py^{r_k}\dots py^{r_1}) \\
&=\tau\Big(p(py^{s_{l-1}}\dots py^{s_1}\bsh py^{r_k}\dots py^{r_1})+py^{r_k}(py^{s_l}\dots py^{s_1}\bsh py^{r_{k-1}}\dots py^{r_1})\Big) \\
&=\tau\Big(\tau\circ i(b_{s_1}\dots b_{s_{l-1}})\bsh \tau \circ i(b_{r_1}\dots b_{r_k})\Big)y+\tau\Big(\tau\circ i(b_{s_1}\dots b_{s_l})\bsh \tau\circ i(b_{r_1}\dots b_{r_{k-1}})\Big) p^{r_k}y.
\end{align*}
Again, induction on the depth implies the claim.
\end{proof} \noindent
By Proposition \ref{tau relates graded qsh and qst}, there is injective algebra morphism
\begin{align*} \label{morphism tau circ i}
\tau\circ i:(\QB,\bst)&\hookrightarrow(\Qnca{p,y},\bsh), \\
b_{s_1}\dots b_{s_l}&\mapsto py^{s_l}\dots py^{s_1}. \nonumber
\end{align*}
In particular, the restriction of $\bsh$ to $\operatorname{im}(\tau\circ i)=\QQ\1+p\Qnca{p,y}$ can be seen as a quasi-shuffle product. Denote
\[\Qnca{p,y}^0=\QQ\1+p\Qnca{p,y}y.\]
\begin{thm} \label{grSZ q-shuffle} There is a $\tau$-invariant, surjective algebra morphism 
\begin{align*}
(\Qnca{p,y}^0,\bsh) &\to \Zq, \\
p^{s_1}y\dots p^{s_l}y&\mapsto\zq(s_1,\ldots,s_l).
\end{align*}
\end{thm}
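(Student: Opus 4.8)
The plan is to transport Theorem \ref{balanced qMZV quasi-shuffle morphism} along the embedding $\tau\circ i$, exactly as the discussion preceding the statement suggests. Everything is formal once one has the two ingredients: the surjective $\tau$-invariant morphism $\zq:(\QB^0,\bst)\to\Zq$ of Theorem \ref{balanced qMZV quasi-shuffle morphism}, and Proposition \ref{tau relates graded qsh and qst} relating $\bst$ and $\bsh$ via $\tau$.

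First I would record that $\tau\circ i:(\QB,\bst)\to(\Qnca{p,y},\bsh)$ is an injective algebra morphism: applying the anti-automorphism $\tau$ to both sides of Proposition \ref{tau relates graded qsh and qst} and using $\tau^2=\mathrm{id}$ gives $(\tau\circ i)(u\bst v)=(\tau\circ i)(u)\bsh(\tau\circ i)(v)$ for all $u,v\in\QB$, and injectivity is immediate since $i$ is injective and $\tau$ is bijective. On basis words $\tau\circ i$ sends $b_{s_1}\cdots b_{s_l}$ to $py^{s_l}\cdots py^{s_1}$; restricting to $\QB^0$, i.e. the span of $\1$ and the words with $s_1\geq1$, these images are precisely $\1$ together with all words beginning in $p$ and ending in $y$, which is a basis of $\Qnca{p,y}^0$. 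Hence $\kappa:=(\tau\circ i)|_{\QB^0}:(\QB^0,\bst)\to(\Qnca{p,y}^0,\bsh)$ is an isomorphism of algebras; in particular $\Qnca{p,y}^0$ is a subalgebra for $\bsh$ (being the image of the subalgebra $\QB^0$), so the statement is meaningful.

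Next I would use the identity $\tau(i(w))=i(\tau(w))$ for $w\in\QB^0$, which says exactly that $\kappa$ intertwines the involution $\tau$ on $\QB^0$ with the involution $\tau$ on $\Qnca{p,y}^0$: indeed both $\kappa(\tau(w))$ and $\tau(\kappa(w))$ equal $i(w)$. Set $\Phi:=\zq\circ\kappa^{-1}:(\Qnca{p,y}^0,\bsh)\to\Zq$; this is a surjective algebra morphism, as the composite of an algebra isomorphism with a surjective algebra morphism. For its values: since $i(b_{s_1}\cdots b_{s_l})=p^{s_1}y\cdots p^{s_l}y$ and $i(w)=\kappa(\tau(w))$ by the intertwining, we get $\kappa^{-1}(p^{s_1}y\cdots p^{s_l}y)=\tau(b_{s_1}\cdots b_{s_l})$, whence $\Phi(p^{s_1}y\cdots p^{s_l}y)=\zq(\tau(b_{s_1}\cdots b_{s_l}))=\zq(s_1,\ldots,s_l)$ by $\tau$-invariance of $\zq$. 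Finally $\Phi\circ\tau=\zq\circ\kappa^{-1}\circ\tau=\zq\circ\tau\circ\kappa^{-1}=\zq\circ\kappa^{-1}=\Phi$, again by $\tau$-invariance of $\zq$, so $\Phi$ is $\tau$-invariant. This $\Phi$ is the asserted morphism.

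There is no substantial obstacle: all the content lies in Theorem \ref{balanced qMZV quasi-shuffle morphism} and Proposition \ref{tau relates graded qsh and qst}. The only point requiring care is bookkeeping — one must pass through the isomorphism $\tau\circ i$ and not the plain embedding $i$ (which need not respect $\bsh$), and then re-express $p^{s_1}y\cdots p^{s_l}y$ as $i(b_{s_1}\cdots b_{s_l})=\kappa(\tau(b_{s_1}\cdots b_{s_l}))$ so that the $\tau$-invariance of $\zq$ identifies the image with $\zq(s_1,\ldots,s_l)$; the same intertwining then yields the $\tau$-invariance of $\Phi$ for free.
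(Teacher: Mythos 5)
Your proof is correct and follows essentially the same route as the paper: both transport Theorem \ref{balanced qMZV quasi-shuffle morphism} along $\tau\circ i$, using Proposition \ref{tau relates graded qsh and qst}, the identity $\tau\circ i=i\circ\tau$ on $\QB^0$, and the $\tau$-invariance of $\zq$. The only difference is presentational: you package the argument as an explicit algebra isomorphism $\kappa=(\tau\circ i)|_{\QB^0}:(\QB^0,\bst)\to(\Qnca{p,y}^0,\bsh)$ and define the map as $\zq\circ\kappa^{-1}$, whereas the paper verifies the $\bsh$-morphism property by the corresponding element-wise computation on the values $\zq(py^{s_l}\dots py^{s_1})$.
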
 \noindent
\begin{proof} 	
Observe that by definition 
\begin{align} \label{10} \zq(i(u))=\zq(u) \quad \text{ for all }u\in \QB^0.
\end{align}  
So as the balanced multiple q-zeta values form a spanning set of $\Zq$, we obtain surjectivity. Since $\tau(i(u))=i(\tau(u))$ for each $u\in \QB^0$, we deduce the $\tau$-invariance from Theorem \ref{balanced qMZV quasi-shuffle morphism}. Finally, we prove that the map is an algebra morphism for $\bsh$. For $s_1,r_1\geq 1,\ s_2,\ldots, s_l,r_2,\ldots,r_k\geq 0$, we obtain by applying the $\tau$-invariance and \eqref{10} \\
\scalebox{0.99}{\parbox{.5\linewidth}{%
\begin{align*}
\zq(py^{s_l}\dots py^{s_1})\zq(py^{r_k}\dots py^{r_1})&=\zq(p^{s_1}y\dots p^{s_l}y)\zq(p^{r_1}y\dots p^{r_k}y)= \zq(b_{s_1}\dots b_{s_l})\zq(b_{r_1}\dots b_{r_k}). 
\end{align*} }} \\
Since $\zq$ is an algebra morphism for the balanced quasi-shuffle product $\bst$, we deduce 
\begin{align*}
\zq(py^{s_l}\dots py^{s_1})\zq(py^{r_k}\dots py^{r_1})&=\zq(b_{s_1}\dots b_{s_l}\bst b_{r_1}\dots b_{r_k}).
\end{align*} 
Applying Proposition \ref{tau relates graded qsh and qst} and then again the $\tau$-invariance gives
\begin{align*} 
\zq(py^{s_l}\dots py^{s_1})\zq(py^{r_k}\dots py^{r_1})
&=\zq\big(\tau\big(\tau\circ i(b_{s_1}\dots b_{s_l})\bsh \tau\circ i(b_{r_1}\dots b_{r_k})\big)\big) \\
&=\zq\big(\tau\circ i(b_{s_1}\dots b_{s_l})\bsh \tau\circ i(b_{r_1}\dots b_{r_k})\big).
\end{align*} 
Finally, applying the definition of $\tau$ and $i$ yields the desired formula
\begin{align*} 
\zq(py^{s_l}\dots py^{s_1})\zq(py^{r_k}\dots py^{r_1})
&=\zq\big(py^{s_l}\dots py^{s_1}\bsh py^{r_k}\dots py^{r_1}\big).
\end{align*} 
\end{proof} \noindent
Next, we will compute the limit of the balanced multiple q-zeta values for $q\to1$ for some special multi indices. 
\begin{prop} \label{non-regularized limit balanced qMZV} For a word $w=b_{\varepsilon_1}\dots b_{\varepsilon_n}b_{k_1}\dots b_{k_d}$ in $\QB^0$, where $\varepsilon_1,\ldots,\varepsilon_n\in\{0,1\}$, $\varepsilon_n=0$ and $k_1,\ldots,k_d\in \mathbb{Z}_{\geq1}$, $k_1\geq2$, we have
\[\lim_{q\to 1} (1-q)^{\wt(w)}\zq(w)=\zeta(x_{\varepsilon_n}\dots x_{\varepsilon_1})\zeta(y_{k_1}\dots y_{k_d}).\]
\end{prop}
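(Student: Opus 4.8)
The plan is to separate the contribution of the initial block $b_{\varepsilon_1}\dots b_{\varepsilon_n}$ (with $\varepsilon_i\in\{0,1\}$) from the tail block $b_{k_1}\dots b_{k_d}$ and to identify each limit with a shuffle, resp.\ stuffle, regularized multiple zeta value. The natural route is via the definition $\zq(w)=G(\varphi_\#^{-1}(w))$ together with the explicit shape of the combinatorial bi-multiple Eisenstein series. First I would use Proposition~\ref{non-regularized limit balanced qMZV}'s structural hypothesis: since all $\varepsilon_i\in\{0,1\}$, the word $w$ lies in the ``SZ-type'' part of $\QB^0$, i.e.\ under the embedding $i:\QB\hookrightarrow\Qnca{p,y}$ it only involves $p$ and $y^0=\1$ in the first block, so $i(w)=p^{\varepsilon_1}y\dots p^{\varepsilon_n}y\,p^{k_1}y\dots p^{k_d}y$ reduces to a word in which the first $n$ letters use only $p$'s (no repeated $p$'s since $\varepsilon_i\le 1$) and $y$'s. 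This identifies $\zq(w)$, via Theorem~\ref{grSZ q-shuffle}, with a Schlesinger--Zudilin-type multiple $q$-zeta value whose $q\to1$ asymptotics are classical. Concretely, I would invoke the known limit formulas for Schlesinger--Zudilin multiple $q$-zeta values established in \cite{ems} (and the weight-graded comparison \cite[Theorem~3.3]{si}): the two letters $p$ and $y$ correspond respectively to the $x_0$ and $x_1$ of the iterated-integral (shuffle) picture in the range where the SZ model is a genuine $q$-analog, while the tail $b_{k_1}\dots b_{k_d}$ with $k_1\ge 2$ is exactly the regime where the balanced $q$-zeta value is a modified $q$-analog of $\zeta(k_1,\dots,k_d)$ in the stuffle normalization.

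The key steps, in order, are: (1)~rewrite $\zq(w)$ using $\tau\circ i$ and Theorem~\ref{grSZ q-shuffle} as an element of $(\Qnca{p,y}^0,\bsh)\to\Zq$, observing that for the particular $w$ in the statement the $\tau$-image is $py^{\varepsilon_n}\dots py^{\varepsilon_1}\,py^{0}\,$-type word built only from $p$ and single $y$'s in its first block; (2)~use the product formula of Theorem~\ref{grSZ q-shuffle} together with the concatenation/deconcatenation compatibility (Proposition~\ref{dec on GenB}, Proposition~\ref{circle}) to factor $\zq(w)$ up to lower-weight terms into a product of a ``head'' piece indexed by $\varepsilon_1,\dots,\varepsilon_n$ and a ``tail'' piece indexed by $k_1,\dots,k_d$; (3)~compute $\lim_{q\to1}(1-q)^{\wt}$ of each factor separately—for the tail this is Proposition~\ref{limit generic qMZV} applied to the generic $q$-zeta value underlying $\zq(k_1,\dots,k_d)$ with all $R_j=t$, giving $\zeta(k_1,\dots,k_d)=\zeta(y_{k_1}\dots y_{k_d})$; for the head, the block of $p$'s and $y$'s limits to the shuffle-regularized value $\zeta^\shuffle(x_{\varepsilon_n}\dots x_{\varepsilon_1})$ because the word read in reverse is precisely the $x_0/x_1$-word of an iterated integral, and the SZ-model limit of such a word is the shuffle regularization (this is the content of the reversal $\tau$ swapping $p\leftrightarrow y$); (4)~check that all lower-weight error terms produced in step (2) have strictly smaller weight and hence vanish after multiplying by $(1-q)^{\wt(w)}$ and taking $q\to1$.

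I expect the main obstacle to be step~(3) for the head block: making rigorous the claim that the $q\to1$ limit of the Schlesinger--Zudilin word in $p,y$ (restricted to $\varepsilon_i\in\{0,1\}$, $\varepsilon_n=0$) is exactly the \emph{shuffle}-regularized MZV $\zeta^\shuffle(x_{\varepsilon_n}\dots x_{\varepsilon_1})$, with the correct reversal of the index and the correct regularization convention at the ``bad'' letters $\varepsilon_i=1$ that would otherwise make the nested sum diverge. The cleanest way to handle this is to reduce to the known statement for SZ multiple $q$-zeta values in \cite{ems}: there the analogous limit is proved, and the reversal comes from the $\tau$-involution intertwining $\bst$ and $\bsh$ (Proposition~\ref{tau relates graded qsh and qst}), which on the level of limits translates the balanced/stuffle normalization on the tail into the shuffle normalization on the head. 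One then only needs the purely formal fact that $\tau$ on words corresponds, after $q\to1$, to the duality exchanging the iterated-sum ($y_k$) and iterated-integral ($x_0,x_1$) presentations—i.e.\ precisely the statement $\zeta^\shuffle$ versus $\zeta^\ast$ coincide on convergent words, while the $\varepsilon_i=1$ letters are absorbed by the shuffle regularization with $\zeta^\shuffle(1)=0$. Everything else (steps 1, 2, 4) is bookkeeping with the generating-series identities already established in Sections~\ref{gen series B section}--\ref{Comparison Ybi B}.
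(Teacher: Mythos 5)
There is a genuine gap, and it sits at the heart of your argument. In step (2) you propose to factor $\zq(w)$ into a ``head'' times a ``tail'' using the product formula, ``up to lower-weight error terms'' which you then discard in step (4). But the balanced quasi-shuffle product is homogeneous in weight: writing $u=b_{\varepsilon_1}\dots b_{\varepsilon_n}$ and $v=b_{k_1}\dots b_{k_d}$, the identity $\zq(u)\zq(v)=\zq(u\bst v)$ expresses the product as a sum in which the concatenation $w=uv$ is only one of many terms, \emph{all of the same weight} $\wt(w)$. So there are no lower-weight error terms to kill by the normalization $(1-q)^{\wt(w)}$; in fact, by Remark \ref{regularized limit balanced qMZV} the other same-weight terms generally have nonzero normalized limits themselves. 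Hence $\zq(w)$ cannot be isolated from $\zq(u)\zq(v)$ by your step (2), and the product structure of the limit has to come from somewhere else.

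The second problem is that the limits you invoke for the individual factors are not actually available by the routes you cite. Theorem \ref{grSZ q-shuffle} only re-indexes the balanced multiple q-zeta values over the two-letter alphabet $\{p,y\}$; it does not identify the power series $\zq(s_1,\ldots,s_l)$ with Schlesinger--Zudilin q-series, so the asymptotics from \cite{ems}, \cite{si} do not transfer (the relation to the SZ model is only through the associated weight-graded product, not an equality of q-series). Likewise, $\zq(k_1,\ldots,k_d)=G(k_1,\ldots,k_d)$ is the combinatorial multiple Eisenstein series, built from the rational double-shuffle solution $\beta$ and the bimoulds $\mathfrak{b}$, $\mathfrak{L}^{(u)}$, $\mathfrak{g}^\ast$; it is not a single generic multiple q-zeta value with $R_j=t$, so Proposition \ref{limit generic qMZV} does not apply to it directly. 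The paper's proof instead rests on the known limit of the bimould $\mathfrak{G}$, namely \cite[Theorem 4.18]{bi} together with \cite[Remark 6.18]{bb}, which already produces the product $\xi(m_1,\ldots,m_j)\zeta(k_{j+1},\ldots,k_d)$ of a conjugated multiple zeta value with an ordinary one; the change of variables $\#_Y^{-1}$ defining $\mathfrak{B}$ from $\mathfrak{G}$ then converts the conjugated factor into the reversed word $\zeta(x_{\varepsilon_n}\dots x_{\varepsilon_1})$. Your proposal never brings in this external input, and without it (or an equivalent asymptotic analysis of the construction of $\mathfrak{G}$) the limit computation cannot be completed.
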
 \noindent
Here we set as explained in \eqref{stuffle MZV}, \eqref{shuffle MZV},
\begin{align*}
\zeta(x_0^{k_1-1}x_1\dots x_0^{k_d-1}x_1)=\zeta(k_1,\ldots,k_d), \qquad \qquad \zeta(y_{k_1}\dots y_{k_d})=\zeta(k_1,\ldots,k_d),
\end{align*}
for all $k_1,\ldots,k_d\in\mathbb{Z}_{\geq1}$, $k_1\geq2$.
\begin{proof}
Let $\mathfrak{z}=(\mathfrak{z}_d)_{d\geq0}$ be the mould of the multiple zeta values, so for each $d\geq1$ we have
\[\mathfrak{z}(X_1,\ldots,X_d)=\sum_{k_1\geq2,k_2,\ldots,k_d\geq1} \zeta(k_1,\ldots,k_d)X_1^{k_1-1}\dots X_d^{k_d-1}.\]
Similar to \cite[Definition 1.3]{bi} define the conjugated multiple zeta values $\xi(m_1,\ldots,m_d)$, $m_1,\ldots,m_{d-1}\geq0,\ m_d\geq1$, by
\begin{align} \label{def conjuagted mzv}
\mathfrak{z}(Y_1+\dots+Y_d,\ldots,Y_1+Y_2,Y_1)=\sum_{m_1,\ldots,m_{d-1}\geq0,m_d\geq1} \xi(m_1,\ldots,m_d) \frac{Y_1^{m_1}}{m_1!}\dots \frac{Y_d^{m_d}}{m_d!}.
\end{align}
By \cite[Theorem 4.18]{bi} and \cite[Remark 6.18]{bb}, the combinatorial bi-multiple Eisenstein series satisfy
\begin{align*}
\lim_{q\to 1} (1-q)^{m_1+\dots+m_j+j+k_{j+1}+\dots+k_d} \cmes{\hspace{0,3cm}1,\ldots,1,\hspace{0,2cm}k_{j+1},\ldots,k_d}{\hspace{-0,5cm}m_1,\ldots,m_j,\hspace{0,2cm}0,\ldots,0}=\xi(m_1,\ldots,m_j)\zeta(k_{j+1},\ldots,k_d)
\end{align*}
for all $1\leq j \leq d$. We deduce from the definition of the balanced multiple q-zeta values (Definition \ref{def balanced qMZV}) that we have for $\varepsilon_1,\ldots,\varepsilon_n\in\{0,1\}$, $\varepsilon_n=0$ and $k_1,\ldots,k_d\in \mathbb{Z}_{\geq1}$, $k_1\geq2$
\begin{align} \label{5}
\lim_{q\to 1} (1-q)^{n+k_1+\dots+k_d} \zq(b_{\varepsilon_1}\dots b_{\varepsilon_n}b_{k_1}\dots b_{k_d})&=\lim_{q\to 1}(1-q)^{n+k_1+\dots+k_d} G(\varphi_{\#}^{-1}(b_{\varepsilon_1}\dots b_{\varepsilon_n}b_{k_1}\dots b_{k_d})) \nonumber \\
&=\xi(\varphi_{\#}^{-1}(b_{\varepsilon_1}\dots b_{\varepsilon_n}))\zeta(k_1,\ldots,k_d).
\end{align}
Here $\xi(\varphi_{\#}^{-1}(b_{\varepsilon_1}\dots b_{\varepsilon_n}))$ means that we first have to apply $\varphi_{\#}$ to $b_{\varepsilon_1}\dots b_{\varepsilon_n}$, which gives an element in $\Qnca{y_{1,m}\mid m\geq0}$, and then we have to identify $\xi(y_{1,m_1}\dots y_{1,m_d})=\xi(m_1,\ldots,m_d)$. On the level of generating series, this means we have to apply $\#_Y^{-1}$ (Definition \ref{def varphi}), therefore we substitute $Y_1\mapsto Y_1,\ Y_2\mapsto Y_2-Y_1,\ldots, Y_d\mapsto Y_d-Y_{d-1}$ in the left-hand side of \eqref{def conjuagted mzv} and obtain
\begin{align} \label{6}
\mathfrak{z}(Y_d,Y_{d-1},\ldots,Y_1)
&=\sum_{m_1,\ldots,m_{d-1}\geq1,m_d\geq2} \zeta(m_d,\ldots,m_1)Y_1^{m_1-1}\dots Y_d^{m_d-1}. 
\end{align}
Combining \eqref{5} and \eqref{6} gives the desired formula.
\end{proof}
\begin{rem} \label{regularized limit balanced qMZV} Similar to \cite[Definition 4.17]{bi}, one could define a regularized limit 
for the balanced multiple q-zeta values by a certain regularization process in the algebra $\QB$ and obtains for each word $w\in \QB$
\[\lim_{q\to1}\textbf{}\!^* (1-q)^{\wt(w)}\zq(w)=\sum_{\substack{uv=w \\ u=b_{\varepsilon_1}\dots b_{\varepsilon_n},\ \varepsilon_i\in \{0,1\} \\ v=b_{k_1}\dots b_{k_d},\ k_i\in \mathbb{N}}} \zeta^{\shuffle}(x_{\varepsilon_n}\dots x_{\varepsilon_1})\zeta^{\ast}(y_{k_1}\dots y_{k_d}).\]
Here $\zeta^{\shuffle}$ and $\zeta^\ast$ denote the shuffle and stuffle regularized multiple zeta value map as given in \eqref{stuffle MZV}, \eqref{shuffle MZV}. In particular, the regularized limit of the balanced multiple q-zeta value $\zq(w)$ vanishes whenever the word $w$ cannot be decomposed as $w=uv$ with $u\in \Qnca{b_0,b_1}$ and $v\in \Qnca{b_i\mid i\geq1}$.
\end{rem}

\noindent
Finally, we want to describe the derivation $q\frac{\diff}{\diff q}$ on $\Zq$ in terms of the balanced multiple q-zeta values. The obtained formula can be seen as a weight-graded version of the derivation formula for the Schlesinger-Zudilin multiple q-zeta values given in \cite[Theorem 4.1]{si}.
\begin{prop} We have for all $s_1\geq1,s_2,\ldots,s_l\geq0$ that
\begin{align*}
q\frac{\diff}{\diff q}\zq(s_1,\ldots,s_l)=\sum_{i=1}^l\sum_{j=i}^l s_i\zq(s_1,\ldots,s_i+1,\ldots,s_j,0,s_{j+1},\ldots,s_l).
\end{align*}
\end{prop}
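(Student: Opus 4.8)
The plan is to deduce the formula from the known derivation behaviour of the combinatorial bi-multiple Eisenstein series, transported through the isomorphism $\varphi_\#$, exactly as the balanced multiple q-zeta values themselves are defined. Concretely, $\zq(s_1,\ldots,s_l)=G(\varphi_\#^{-1}(b_{s_1}\dots b_{s_l}))$ by Definition~\ref{def balanced qMZV}, and both $G$ and the passage $\mathfrak{G}\leftrightarrow\mathfrak{B}$ via $\#_Y$ are explicit; so it suffices to know how $q\tfrac{\diff}{\diff q}$ acts on the generating series $\Gcmes{d}{X_1,\ldots,X_d}{Y_1,\ldots,Y_d}$ of the combinatorial bi-multiple Eisenstein series, translate this into an action on $\Gzq{d}{X_1,\ldots,X_d}{Y_1,\ldots,Y_d}$ by substituting $Y_i\mapsto Y_i-Y_{i-1}$ (the map $\#_Y^{-1}$), and read off the coefficients.

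First I would record the derivation formula on the depth-$d$ generating series $\mathfrak{G}_d$. From the Fourier-expansion building blocks, $q\tfrac{\diff}{\diff q}$ acts on $L_u\binom{X}{Y}=\tfrac{\exp(X+uY)q^u}{1-\exp(X)q^u}$ by $q\tfrac{\diff}{\diff q}L_u\binom{X}{Y}=\partial_Y\!\bigl(u\,L_u\binom{X}{Y}\bigr)$ up to the $\exp(X)$ factor; more usefully, at the level of the full bimould $\mathfrak{G}$ this should produce a sum of ``insertion'' operators that raise one $X$-exponent by one and redistribute a $Y$-variable across a tail of later slots. In fact the cleanest route is to prove the statement \emph{directly on the words of $\QB^0$}: define the $\QQ$-linear operator $D\colon\QB^0\to\QB^0$ by
\begin{align*}
D(b_{s_1}\dots b_{s_l})=\sum_{i=1}^l\sum_{j=i}^l s_i\, b_{s_1}\dots b_{s_i+1}\dots b_{s_j}b_0 b_{s_{j+1}}\dots b_{s_l},
\end{align*}
and show that $\zq\circ D=q\tfrac{\diff}{\diff q}\circ\zq$ as maps $\QB^0\to\Zq$. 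Since the balanced multiple q-zeta values span $\Zq$ and the stated identity is precisely $\zq(D(b_{s_1}\dots b_{s_l}))=q\tfrac{\diff}{\diff q}\zq(s_1,\ldots,s_l)$, this is equivalent to the Proposition. On generating series, $D$ corresponds on $\GenBo$ to the operator $\sum_{1\le i\le j\le d}\partial_{X_i}$ applied with a simultaneous ``splitting'' of the block between slots $i$ and $j$ inserting one extra $b_0$, i.e. a controlled combination of $\partial_{X_i}$ and a $Y$-shift; I would make this bookkeeping precise using $\rho_\B^0$ and Proposition~\ref{circle}.

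The main obstacle will be establishing the derivation formula for $\mathfrak{G}$ in all depths: one has it in depth $1$ from Example~\ref{CMES in depth 1,2} (where $q\tfrac{\diff}{\diff q}$ of $\tfrac{1}{(k-1)!m!}\sum_{u,v>0}u^mv^{k-1}q^{uv}$ is visibly $\tfrac{1}{(k-1)!m!}\sum u^{m+1}v^k q^{uv}$, matching the $l=1$ case $q\tfrac{\diff}{\diff q}\zq(s_1)=s_1\zq(s_1+1)$), but in higher depth one must track how $q\tfrac{\diff}{\diff q}$ interacts with the nested sum $\sum_{u_1>\dots>u_j>0}$ defining $\mathfrak{g}^\ast$ and with the mould product with $\mathfrak{b}$ (whose coefficients are constants, hence killed by $q\tfrac{\diff}{\diff q}$). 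The key computation is that $q\tfrac{\diff}{\diff q}$ applied to a product $\prod_i L^{(u_i)}$ summed over $u_1>\dots>u_j>0$ produces, by the Leibniz rule, a sum over ``which factor gets differentiated'', and that the $q^{u_i\cdot(\text{stuff})}$ structure converts differentiation in $q$ into the operator $u_i\,\partial_{Y\text{-block}}$ acting on the $i$-th factor; after translating through $\#_Y^{-1}$ the $u_i$-weight turns into the $X$-derivative and the $\partial_Y$ into the summation index $j$ ranging over the tail. Alternatively, if a reference for the derivation formula on the Schlesinger--Zudilin side (\cite[Theorem~4.1]{si}) can be invoked together with Theorem~\ref{grSZ q-shuffle}, one can instead take the weight-graded piece of that formula and transport it along $\tau\circ i$; I would check that $q\tfrac{\diff}{\diff q}$ is homogeneous for the weight (it raises weight by exactly $1$) so that passing to the associated graded is harmless, and then verify the combinatorics of indices match the stated double sum $\sum_{i=1}^l\sum_{j=i}^l$. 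The routine parts — the explicit power-series identities and coefficient comparisons — I would not write out in full.
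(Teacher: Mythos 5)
Your overall route coincides with the paper's: transport a derivation formula for the bimould $\mathfrak{G}$ through $\#_Y^{-1}$ and compare coefficients, and the coefficient bookkeeping you describe (equivalently, your operator $D$ on words of $\QB^0$) is exactly what the paper carries out. The gap is the key input itself. The paper's proof rests on the precise identity
\begin{align*}
q\frac{\diff}{\diff q}\,\Gcmes{d}{X_1,\ldots,X_d}{Y_1,\ldots,Y_d}=\sum_{i=1}^d\frac{\partial}{\partial X_i}\frac{\partial}{\partial Y_i}\,\Gcmes{d}{X_1,\ldots,X_d}{Y_1,\ldots,Y_d},
\end{align*}
which it simply quotes from \cite[Proposition 6.29]{bb}; applying $\sum_i\partial_{X_i}\partial_{Y_i}$ to the monomials $X_1^{k_1-1}Y_1^{m_1}X_2^{k_2-1}(Y_1+Y_2)^{m_2}\dots X_d^{k_d-1}(Y_1+\dots+Y_d)^{m_d}$ then produces the double sum over $i\leq j$ and the claim follows by coefficient comparison. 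You never state this identity precisely --- your guess that the action on $\mathfrak{G}$ redistributes a $Y$-variable over a tail of later slots describes the operator \emph{after} the $\#_Y^{-1}$ substitution, not before --- and you explicitly flag proving it in all depths as the main obstacle, offering only a heuristic Leibniz sketch over the factors $\mathfrak{L}^{(u_i)}$. That all-depth computation (nontrivial because of the shifted arguments $X_i-X_j$ in $\mathfrak{b}$, $\widetilde{\mathfrak{b}}$ and the nested sums over $u_1>\dots>u_j>0$) is the real content, so the proposal as written is not a proof; it becomes one as soon as you invoke \cite[Proposition 6.29]{bb} at this point, as the paper does.

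Your fallback route via Singer's theorem would not work as described. The balanced model is tied to the Schlesinger--Zudilin model only through the statement that $\bst$ is the associated weight-graded product of the SZ quasi-shuffle product; there is no map of actual q-series along which one could take the weight-graded piece of \cite[Theorem 4.1]{si} and transport it along $\tau\circ i$, and the weight-grading on $\Zq$ needed for such an argument is only conjectural (Conjecture \ref{all relations in Zq balanced qMZV}). Note also that $q\frac{\diff}{\diff q}$ raises the weight by $2$, not by $1$: increasing $s_i$ by one and inserting an extra $0$ each contribute $1$ to the weight, as the paper remarks right after the statement.
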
 \noindent
Observe that the derivation $q\frac{\diff}{\diff q}$ is a homogeneous operator increasing the weight by $2$.
\begin{proof} By \cite[Proposition 6.29]{bb} the generating series of the combinatorial bi-multiple Eisenstein series satisfy 
\begin{align*}
q\frac{\diff}{\diff q} \Gcmes{d}{X_1,\ldots,X_d}{Y_1,\ldots,Y_d}=\sum_{i=1}^d \frac{\partial}{\partial X_i}\frac{\partial}{\partial Y_i} \Gcmes{d}{X_1,\ldots,X_d}{Y_1,\ldots,Y_d}, \qquad d\geq1.
\end{align*}
By definition of the balanced multiple q-zeta values (Definition \ref{def balanced qMZV}), we obtain that \\
\scalebox{0.975}{\parbox{.5\linewidth}{%
\begin{align} \label{3}
\sum_{\substack{k_1,\dots,k_d \geq 1 \\ m_1,\dots,m_d\geq 0}} q\frac{\diff}{\diff q} \zq(k_1,\{0\}^{m_1},\ldots,k_d,\{0\}^{m_d})X_1^{k_1-1}Y_1^{m_1}X_2^{k_2-1}(Y_1+Y_2)^{m_2}\dots X_d^{k_d-1}(Y_1+\dots+Y_d)^{m_d} 
\end{align} }} \\
must be equal to \\
\scalebox{0.975}{\parbox{.5\linewidth}{%
\begin{align} \label{4}
&\sum_{\substack{k_1,\dots,k_d \geq 1 \\ m_1,\dots,m_d\geq 0}} \sum_{i=1}^d \zq(k_1,\{0\}^{m_1},\ldots,k_d,\{0\}^{m_d}) \frac{\partial}{\partial X_i}\frac{\partial}{\partial Y_i} X_1^{k_1-1}Y_1^{m_1}X_2^{k_2-1}(Y_1+Y_2)^{m_2} \\
& \hspace{12cm} \dots X_d^{k_d-1}(Y_1+\dots+Y_d)^{m_d} \nonumber \\
&=\sum_{\substack{k_1,\dots,k_d \geq 1 \\ m_1,\dots,m_d\geq 0}} \sum_{i=1}^d\sum_{j=i}^d (k_i-1)m_j\zq(k_1,\{0\}^{m_1},\ldots,k_d,\{0\}^{m_d})X_1^{k_1-1}Y_1^{m_1}X_2^{k_2-1}(Y_1+Y_2)^{m_2} \nonumber \\
&\hspace{2,3cm}\dots X_i^{k_i-2}(Y_1+\dots+Y_i)^{m_i}\dots X_j^{k_j-1}(Y_1+\dots+Y_j)^{m_j-1}\dots X_d^{k_d-1}(Y_1+\dots+Y_d)^{m_d}. \nonumber
\end{align} }} \\
Coefficient comparison at $X_1^{k_1-1}Y_1^{m_1}X_2^{k_2-1}(Y_1+Y_2)^{m_2}\dots X_d^{k_d-1}(Y_1+\dots+Y_d)^{m_d} $ in \eqref{3} and \eqref{4} yields 
\begin{align*}
&q\frac{\diff}{\diff q}\zq(k_1,\{0\}^{m_1},\ldots,k_d,\{0\}^{m_d})\\
&\hspace{2cm}=\sum_{i=1}^d\sum_{j=i}^d k_i(m_j+1)\zq(k_1,\{0\}^{m_1},\ldots,k_i+1,\{0\}^{m_i},\ldots,k_j,\{0\}^{m_j+1},\ldots,k_d,\{0\}^{m_d}),
\end{align*}
which is equivalent to the claimed formula.
\end{proof}

\vspace{1cm} 
\paragraph{\textbf{Outlook.}} In forthcoming articles we will formalize the balanced multiple q-zeta values and obtain an algebra generated by symbols satisfying the balanced quasi-shuffle product and $\tau$-invariance. Conjecturally, this algebra is equipped with a Hopf algebra structure where the coproduct is given by a generalization of Goncharov's coproduct. A first step towards this is a Lie algebra structure obtained in \cite{bu}. Moreover, this algebra of symbols should give a complete description of all relations in the algebra $\Zq$ and by Theorem \ref{Hopf iso Ybi B} also a description of all relations between multiple Eisenstein series. By applying Racinet's ideas for formal multiple zeta values (\cite{ra}) to this new algebra, we will obtain a formal version of the limit computation given in Proposition \ref{non-regularized limit balanced qMZV}. By \cite{bi} this map could be also seen as a formal version of taking the constant term of q-series.


\begin{thebibliography}{xxxxxxxx}
\bibitem[Ba19]{ba} H. Bachmann. "\href{https://arxiv.org/abs/1504.08138}{The algebra of bi-brackets and regularized multiple Eisenstein series}". In: \textit{Journal of Number Theory Volume 200, p. 260-294, 2019}.
\bibitem[Ba20]{ba2} H. Bachmann. "\href{https://arxiv.org/abs/1704.06930}{Multiple Eisenstein series and q-analogues of multiple zeta values}". In: \textit{Periods in Quantum Field Theory and Arithmetic, Springer Proceedings in Mathematics \& Statistics Volume 314, p. 173–235, 2020}.
\bibitem[BBI23]{bbi} H. Bachmann, B. Brindle, J.-W. van Ittersum. "A structure theorem for multiple q-zeta values and formal multiple Eisenstein series". \textit{In preparation}.
\bibitem[BB22]{bb} H. Bachmann, A. Burmester. "Combinatorial multiple Eisenstein series". Preprint, \textit{ArXiv: \href{https://arxiv.org/abs/2203.17074}{2203.17074v2} [math.NT], 2022}.
\bibitem[BI22]{bi} H. Bachmann, J.-W. van Ittersum. "Partitions, Multiple Zeta Values and the q-bracket". Preprint, \textit{ArXiv: \href{https://arxiv.org/abs/2203.09165}{2203.09165} [math.NT], 2022}.
\bibitem[BK20]{bk} H. Bachmann, U. K\"uhn. "\href{https://arxiv.org/abs/1708.07464}{A dimension conjecture for q-analogues of multiple zeta values}". In: \textit{Periods in Quantum Field Theory and Arithmetic, Springer Proceedings in Mathematics \& Statistics Volume 314, p. 237–258, 2020}.
\bibitem[Bra05]{bra} D. Bradley. "\href{https://arxiv.org/abs/math/0402093}{Multiple q-zeta values}". In: \textit{Journal of Algebra Volume 283, p. 752–798, 2005}.
\bibitem[Bri21]{bri} B. Brindle. "A unified approach to qMZVs". Preprint, \textit{ArXiv: \href{https://arxiv.org/abs/2111.00051}{2111.00051} [math.NT], 2021}.
\bibitem[Bro17]{bro} F. Brown. "\href{https://www.cambridge.org/core/journals/forum-of-mathematics-sigma/article/zeta-elements-in-depth-3-and-the-fundamental-lie-algebra-of-the-infinitesimal-tate-curve/6EFEE9902E1BE0B55B205B759E530881}{Zeta elements in depth 3 and the fundamental Lie algebra of the infinitesimal Tate curve}". In: \textit{Forum of Mathematics, Sigma Volume 5, 56 pages, 2017}.
\bibitem[BGF]{bgf} J. I. Burgos Gil, J. Fresan. "\href{http://javier.fresan.perso.math.cnrs.fr/mzv.pdf}{Multiple zeta values: from numbers to motives}". In: \textit{Clay Mathematics Proceedings, to appear}.
\bibitem[Bu23]{bu} A. Burmester. "\href{https://www.dropbox.com/s/dl/n1rciwtablotxhz/thesis_burmester_final.pdf}{An algebraic approach to multiple q-zeta values}". Thesis, \textit{Universität Hamburg, 2023}.
\bibitem[CEM15]{cem} J. Castillo-Medina, K. Ebrahimi-Fard, D. Manchon. "\href{https://arxiv.org/abs/1310.1330}{Unfolding the double shuffle structure of q-multiple zeta values}". In: \textit{Bulletin of the Australian Mathematical Society Volume 91, p. 368-388, 2015}.
\bibitem[Con22]{co} N. Confurius. "\href{https://www.math.uni-hamburg.de/home/confurius/MSc_Confurius.pdf}{Period polynomials and multivariate extensions}". Master thesis, \textit{Universität Hamburg, 2022}.
\bibitem[Dr91]{dr} V. G. Drinfeld. "On quasi-triangular quasi-Hopf algebras and a group closely connected with $\operatorname{Gal}(\QQ/\QQ)$". In: \textit{Leningrad Mathematical Journal Volume 2, p. 829-860, 1991}.
\bibitem[EMS16]{ems} K. Ebrahimi-Fard, D. Manchon, J. Singer. "\href{https://arxiv.org/abs/1512.00753}{Duality  and  (q-)multiple  zeta  values}". In: \textit{Advances in Mathematics Volume 298, p. 254-285, 2016}.
\bibitem[Ec81]{ec-old} J. Ecalle. "Les fonctions résurgentes Volume 2". \textit{Publications Mathématiques d'Orsay, 1981}. 
\bibitem[Ec02]{ec-mzv} J. Ecalle. "\href{https://www.imo.universite-paris-saclay.fr/~jean.ecalle/fichiersweb/WEB_tale_of_three_0.pdf}{A Tale of Three Structures: the Arithmetics of Multizetas, the Analysis of Singularities, the Lie Algebra ARI}". In: \textit{Differential equations and the Stokes phenomenon, World Scientific Publishing Volume 17, p. 89-146, 2002}.
\bibitem[Ec11]{ec} J. Ecalle. "\href{https://www.imo.universite-paris-saclay.fr/~jean.ecalle/fichiersweb/WEB_flexions_1.pdf}{The flexion structure and dimorphy: flexion units, singulators, generators and the enumeration of multizeta irreducibles}". In: \textit{Asymptotics in Dynamics, Geometry and PDEs; Generalized Borel Summation Volume II, Publications of the Scuola Normale Superiore Volume 12, p. 27-211, 2011}.
\bibitem[Fu11]{fu} H. Furusho. "\href{https://arxiv.org/abs/0808.0319}{Double shuffle relation for associators}". In: \textit{Annals of Mathematics Volume 174, p. 341-360, 2011}.
\bibitem[GKZ06]{gkz} H. Gangl, M. Kaneko, D. Zagier. "\href{https://people.mpim-bonn.mpg.de/zagier/files/mpim/05-96/fulltext.pdf}{Double zeta values and modular forms}". In: \textit{Automorphic forms and zeta functions, World Scientific Publishing, p. 71–106, 2006}.
\bibitem[Hof97]{h2} M. Hoffman. "\href{https://www.sciencedirect.com/science/article/pii/S0021869397971271}{The Algebra of Multiple Harmonic Series}". In: \textit{Journal of Algebra Volume 194, p. 477-495, 1997}.
\bibitem[Hof]{hw} M. Hoffman. "\href{https://www.usna.edu/Users/math/meh/biblio.html}{References on multiple zeta values and Euler sums}", webpage.
\bibitem[Hof00]{h} M. Hoffman. "\href{https://arxiv.org/abs/math/9907173}{Quasi-shuffle products}". In: \textit{Journal of Algebraic Combinatorics Volume 11, p. 49-68, 2000}.
\bibitem[HI17]{hi} M. Hoffman, K. Ihara. "\href{https://arxiv.org/abs/1610.05180}{Quasi-shuffle products revisited}". In: \textit{Journal of Algebra Volume 481, p. 293-326, 2017}.
\bibitem[Ih07]{ih} K. Ihara. "\href{https://www.kurims.kyoto-u.ac.jp/~kyodo/kokyuroku/contents/pdf/1549-04.pdf}{Derivation and double shuffle relations for multiple zeta values}". In: \textit{RIMS Kôkyûroku Volume 1549, p. 47-63, 2007}.
\bibitem[IKZ06]{ikz} K. Ihara,  M. Kaneko, D. Zagier. "\href{https://people.mpim-bonn.mpg.de/zagier/files/doi/10.1112/S0010437X0500182X/fulltext.pdf}{Derivation and double shuffle relations for multiple zeta values}". In: \textit{Compositio Mathematica Volume 142, p. 307-338, 2006}.
\bibitem[KZ95]{kz} M. Kaneko, D. Zagier. "A generalized Jacobi theta function and quasimodular forms". In: \textit{The Moduli Space of Curves, Progress in Mathematics  Volume 129, p. 165–172, 1995}.
\bibitem[Ok14]{ok} A. Okounkov. "\href{https://arxiv.org/abs/1404.3873}{Hilbert schemes and multiple q-zeta values}". In: \textit{Functional Analysis and Its Applications Volume 48, p. 138-144, 2014}.
\bibitem[Rac00]{ra} G. Racinet. "\href{https://tel.archives-ouvertes.fr/tel-00110891}{Séries génératrices non-commutatives de polyzêtas et associateurs de Drinfeld}". Thesis, \textit{Laboratoire Amiénois de Mathématique Fondamentale et Appliquée, 2000}.
\bibitem[Re93]{re} C. Reutenauer. "Free Lie Algebras". \textit{Oxford University Press, 1993}.
\bibitem[Sc01]{schl} K. Schlesinger. "Some remarks on q-deformed multiple polylogarithms". Preprint, \textit{ArXiv: \href{https://arxiv.org/abs/math/0111022}{math/0111022} [math.QA], 2001}.
\bibitem[Sc15]{sc} L. Schneps. "ARI, GARI, Zig and Zag: An introduction to Ecalle's theory of multiple zeta values". Preprint, \textit{ArXiv: \href{https://arxiv.org/abs/1507.01534}{1507.01534} [math.NT], 2015}.
\bibitem[Si15]{si} J. Singer. "\href{https://projecteuclid.org/journals/functiones-et-approximatio-commentarii-mathematici/volume-53/issue-1/On-q-analogues-of-multiple-zeta-values/10.7169/facm/2015.53.1.8.full}{On q-analogues of multiple zeta values}". In: \textit{Functiones et Approximatio Volume 53, p. 135-165, 2015}.
\bibitem[Ta13]{ta} Y. Takeyama. "\href{https://arxiv.org/abs/1306.6164}{The Algebra of a q-analogue of multiple harmonic series}". In: \textit{SIGMA Volume 9, 15 pp., 2013}.
\bibitem[Vl20]{vl} A. Vleeshouwers. "Multiple zeta values and their q-analogues". Master thesis, \textit{ArXiv: \href{https://arxiv.org/abs/2007.08865}{2007.08865} [math.NT], 2020}.
\bibitem[Za94]{za} D. Zagier. "Values of zeta functions and their applications". In: \textit{First European Congress of Mathematics Volume II, p. 497-512, 1994}.
\bibitem[Zh07]{zh2} J. Zhao. "\href{https://arxiv.org/abs/math/0304448}{Multiple q-zeta functions and multiple q-polylogarithms}". In: \textit{The Ramanujan Journal Volume 14, p. 189–221, 2007}.
\bibitem[Zh20]{zh} J. Zhao. "\href{https://arxiv.org/abs/1412.8044}{Uniform Approach to Double Shuffle and Duality Relations of Various q-Analogs of Multiple Zeta Values via Rota-Baxter Algebras}". In: \textit{Periods in Quantum Field Theory and Arithmetic, Springer Proceedings in Mathematics \& Statistics Volume 314, p. 259-292, 2020}. 
\bibitem[Zu03]{zu2} W. Zudilin. "\href{https://wain.mi-ras.ru/PS/mzv_rms.pdf}{Algebraic relations for multiple zeta values}". In: \textit{Russian Mathematical Surveys Volume 58, p. 1-29,2003}.
\bibitem[Zu15]{zu} W. Zudilin. "\href{https://arxiv.org/abs/1412.0163}{Multiple q-zeta brackets}". In: \textit{Mathematics Volume 3, p. 119-130, 2015}.
\end{thebibliography}
\end{document}